\newtheorem{theorem}{Theorem}[section]
\newtheorem{lemma}[theorem]{Lemma}
\newtheorem{corollary}[theorem]{Corollary}
\newtheorem{proposition}[theorem]{Proposition}
\newtheorem{definition}[theorem]{Definition}
\newtheorem{conjecture}[theorem]{Conjecture}\newtheorem{Ansatz}[theorem]{Assumption}
\newtheorem{remark}[theorem]{Remark}
\newtheorem*{ack*}{Acknowledgment}
\def\v{{\bf v}}\def\p{\vec{p}}
\def\F{{\mathcal F}}
\def\R{{\mathbb R}}
\def\N{{\mathbb N}}
\def\C{{\mathbb C}}
\def\Q{{\mathcal T}}\def\B{{\mathcal B}}
\def\M{{\mathcal M}}\def\G{{\bf Gr}}
\def\A{{\mathcal N}}\def\SC{\mathcal C}
\def\dim{{\operatorname{dim}}}\def\Col{{\textbf{Col}}}\def\deg{{\operatorname{deg}}}
\def\supp{{\operatorname{supp}}}\def\det{{\operatorname{det}}}
\def\bas{\begin{align*}}
\def\eas{\end{align*}}
\def\bi{\begin{itemize}}
\def\ei{\end{itemize}}
\newenvironment{proof}{\noindent {\bf Proof} }{\endprf\par}
\def \endprf{\hfill  {\vrule height6pt width6pt depth0pt}\medskip}
\def\emph#1{{\it #1}}
\begin{document}
\author{Jean Bourgain}
\address{School of Mathematics, Institute for Advanced Study, Princeton, NJ 08540}
\email{bourgain@@math.ias.edu}
\author{Ciprian Demeter}
\address{Department of Mathematics, Indiana University, 831 East 3rd St., Bloomington IN 47405}
\email{demeterc@@indiana.edu}
\thanks{The first author is partially supported by the NSF grant DMS-1301619. The second  author is partially supported  by the NSF Grant DMS-1161752}
\thanks{ AMS subject classification: Primary 11L07; Secondary 42A45}
\title[Mean value estimates for  Weyl sums]{Mean value estimates for  Weyl sums in two dimensions}

\begin{abstract}
We use decoupling theory to estimate the number of solutions  for quadratic and cubic Parsell--Vinogradov systems in two dimensions.
\end{abstract}
\maketitle

\section{Introduction}
\medskip
For $k\ge 2$ let $\M_{2,k}$ be the two dimensional manifold in $\R^n=\R^{\frac{k(k+3)}2}$
\begin{equation}
\label{ctuv45t8vt9590v0rt98h]owpq,dirfx[z]qwpxe]}
\M_{2,k}=\{(t,s,\Psi(t,s)):(t,s)\in[0,1]^2\},
\end{equation}
where the entries of $\Psi(t,s)$ consist of all the monomials $t^is^j$ with $2\le i+j\le k$.

For each square $R\subset [0,1]^2$ and each $g:R\to\C$ define the extension operator associated with $\M_{2,k}$
\begin{equation}
\label{lp[.gi0ty-hl9yb./v045tiuh,89}
E_R^{(k)}g(x_1,\ldots,x_n)=\int_Rg(t,s)e(x_1t+x_2s+x_3t^2+x_4s^2+x_5st+\ldots)dtds.
\end{equation}
In particular,
$$E_R^{(2)}g(x_1,\ldots,x_5)=\int_Rg(t,s)e(x_1t+x_2s+x_3t^2+x_4s^2+x_5st)dtds,$$
$$E_R^{(3)}g(x_1,\ldots,x_9)=\int_Rg(t,s)e(x_1t+x_2s+x_3t^2+x_4s^2+x_5st+x_6t^3+x_7s^3+x_8t^2s+x_9ts^2)dtds.$$

Here and throughout the rest of the paper we will  write $$e(z)=e^{2\pi i z},\;z\in\R.$$
For a positive weight $v:\R^n\to[0,\infty)$ we define
$$\|f\|_{L^p(v)}=(\int_{\R^n}|f(x)|^pv(x)dx)^{1/p}.$$
Also, for each ball $B$ in $\R^n$ centered at $c(B)$ and with radius $R$, $w_{B}$ will denote the weight
$$w_{B}(x)= \frac{1}{(1+\frac{|x-c(B)|}{R})^{100n}}.$$

For $N\ge 1$ and $p\ge 2$, let $D_k(N,p)$ be the smallest constant such that
$$\|E_{[0,1]^2}^{(k)}g\|_{L^p(w_{B_{N}})}
\le D_k(N,p)(\sum_{\Delta\subset [0,1]^2\atop{l(\Delta)=N^{-1/k}}}\|E_\Delta^{(k)} g\|_{L^p(w_{B_{N}})}^p)^{1/p},
$$
for each $g:[0,1]^2\to\C$ and each ball $B_N\subset\R^n$ with radius $N$, where the sum  is over a finitely overlapping cover of $[0,1]^2$ with squares $\Delta$ of side length $l(\Delta)=N^{-1/k}$.
\bigskip

Our main result is the following decoupling theorem for $\M_{2,k}$, when $k\in\{2,3\}$.
\begin{theorem}
\label{tfek6}

\begin{enumerate}

\item ($k=2$) For each $p\ge 2$ we have
$$D_2(N,p)\lesssim_{\epsilon,p} N^{\frac12-\frac1p+\epsilon},\,2\le p\le 8,$$
$$D_2(N,p)\lesssim_{\epsilon,p} N^{1-\frac5{p}+\epsilon},\,p\ge 8.$$

\item ($k=3$) For each $2\le p\le 16$
\begin{equation}
\label{fe3added-later}
D_3(N,p)\lesssim_{\epsilon,p} N^{\frac{2}3(\frac12-\frac1p)+\epsilon},\,2\le p\le 16.
\end{equation}
\end{enumerate}
\end{theorem}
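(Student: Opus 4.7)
The plan is to reduce to critical endpoint estimates and to prove those endpoints via an induction on scales following the Bourgain--Guth/Bourgain--Demeter broad/narrow paradigm. Specifically, it suffices to prove
\[
D_2(N,8)\lesssim_\epsilon N^{3/8+\epsilon}\qquad\text{and}\qquad D_3(N,16)\lesssim_\epsilon N^{7/24+\epsilon},
\]
since the rest of Theorem~\ref{tfek6} then follows by interpolation of $l^p$-decoupling constants. For part (1), the trivial $l^2$-orthogonality bound $D_2(N,2)=1$ interpolates with the $p=8$ endpoint to give the range $2\le p\le 8$, and the trivial triangle-inequality bound $D_2(N,\infty)\lesssim N$ (there being $N$ caps of side $N^{-1/2}$) interpolates with the same $p=8$ endpoint to give $p\ge 8$. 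For part (2), interpolating $D_3(N,16)$ with $D_3(N,2)=1$ yields the whole range $2\le p\le 16$.

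For the quadratic endpoint I would proceed by induction on the scale $N$. The two main ingredients are (i) a multilinear (trilinear) extension estimate for $\M_{2,2}\subset\R^5$ at caps of side $N^{-1/2}$ satisfying a quantitative transversality condition, obtained from multilinear Kakeya/restriction for $2$-dimensional surfaces in $\R^5$ after checking that three sufficiently separated caps in $[0,1]^2$ produce surface normals whose $3$-fold wedge is non-degenerate; and (ii) a catalogue of lower-dimensional $l^2$-decoupling results that absorb each \emph{narrow} configuration in the Bourgain--Guth dichotomy. The narrow cases correspond to cap collections whose $(t,s)$-parameters cluster near a coordinate line, the diagonal, or a low-degree algebraic curve; in each case the image under $\Psi$ is concentrated near a lower-dimensional manifold (the $1$D parabola in $\R^2$, the truncated cone in $\R^3$, or the $2$D paraboloid in $\R^3$) for which sharp $l^2$-decoupling is already available. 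Iterating these two ingredients across scales closes the $p=8$ bound.

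The cubic endpoint has the same architecture but with a richer narrow-case analysis. Here $\M_{2,3}\subset\R^9$, and the broad estimate on caps of side $N^{-1/3}$ again comes from multilinear Kakeya/restriction for $2$-surfaces in $\R^9$ after a transversality check. The essential new narrow regime occurs when the cap normals concentrate in the $5$-plane dual to the cubic coordinates $x_6,\ldots,x_9$: a finer sub-ball localization together with the projection $(x_1,\ldots,x_9)\mapsto(x_1,\ldots,x_5)$ reduces this regime to an extension problem for $\M_{2,2}$, and part (1) of the theorem must be fed in as a black box. Other narrow regimes use $1$D cubic moment curve decoupling (Bourgain--Demeter--Guth) applied to the slices $\{t=\mathrm{const}\}$ and $\{s=\mathrm{const}\}$, as well as $l^2$-decoupling for the cone and the paraboloid. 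The main obstacle I anticipate is the exhaustive enumeration of narrow configurations: every degeneracy of the normal map of $\M_{2,3}$ must be matched to a lower-dimensional decoupling that is sharp at the target exponent $p=16$, and the rescaled sub-problems produced by the broad step and the narrow projections must all fall into the range where the inductive hypothesis is available.
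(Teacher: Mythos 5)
Your architecture (multilinear restriction/Kakeya plus Bourgain--Guth induction on scales and parabolic rescaling) matches the paper's in outline, but two of your load-bearing steps fail as stated. First, the trilinear broad estimate for $\M_{2,2}\subset\R^5$ does not exist at the level you need: for \emph{any} three tangent $2$-planes $V_1,V_2,V_3$ of a $2$-surface in $\R^5$, the orthogonal complements $V_1^\perp,V_2^\perp$ are $3$-dimensional, so $\dim(V_1^\perp\cap V_2^\perp)\ge 3+3-5=1$; a line $V$ in this intersection has $\dim\pi_1(V)=\dim\pi_2(V)=0$ and $\dim\pi_3(V)\le 1$, violating the Brascamp--Lieb transversality condition $\dim(V)\le\frac{5}{6}\sum_j\dim\pi_j(V)$ that governs the trilinear estimate at the natural exponent $L^5$. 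No amount of separation of the three caps fixes this; the degree of multilinearity must be large. This is exactly why the paper works with $\Lambda K$-linear estimates ($m\ge n=\frac{k(k+3)}2$ caps), defines transversality by requiring the points not to lie near the zero set of any polynomial of degree $\le 2k-2$ (Definition \ref{dfek1}), verifies the Brascamp--Lieb condition for such configurations in Proposition \ref{pfek1} (conditional on Conjecture \ref{ckconjh}, proved for $k=2,3$ in Lemma \ref{lBnew}), and then imports the multilinear Kakeya/restriction machinery of \cite{BCCT}, \cite{BBFL} with a compactness argument (Theorems \ref{tfek2}, \ref{tfek4}). The same objection applies to your unspecified multilinear input for $\M_{2,3}\subset\R^9$: low-degree multilinearity fails there too.

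Second, your reduction of part (2) to the single endpoint $p=16$ by interpolating with $p=2$ is precisely the step the paper states it cannot perform for $k\ge 3$: the relevant Fourier supports are $N^{-1}$-neighborhoods of the graph over caps of side $N^{-1/3}$, which are genuinely curved regions (the quadratic deviation $N^{-2/3}$ vastly exceeds $N^{-1}$), so the standard interpolation of decoupling constants (valid for $k=2$, cf.\ \cite{BD3}) is not available. The paper instead proves the whole range $n\le p\le k(k+3)-2$ directly, running the multi-scale inequality of Corollary \ref{gjityiophjytophpotigirti0-we=fdcwee=w=} with $\kappa>\kappa_p$ for subcritical $p$ in the bootstrap of Section \ref{last}. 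Relatedly, your narrow-case plan is both incomplete and harder than necessary: the degenerate configurations here are neighborhoods of zero sets of the polynomials $Q_{v,w}$ of degree up to $2k-2$ (quartic curves when $k=3$), not just coordinate lines, the diagonal, or conics, and you give no argument that each such regime matches a known sharp lower-dimensional decoupling (you flag this yourself as the main obstacle). The paper bypasses this enumeration entirely: by Wongkew's theorem a $\frac1K$-neighborhood of such a zero set meets only $O(K)$ squares of $\Col_K$ (Theorem \ref{tfek3}), so in the Bourgain--Guth step the non-transverse contribution involves at most $\Lambda K$ squares and is handled by the trivial decoupling of Lemma \ref{fl1} with loss $K^{1-\frac2p}$, which the induction on scales absorbs (Propositions \ref{fp2}, \ref{fp3}, Theorem \ref{ft2}). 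Your $k=2$ endpoint-plus-interpolation reductions for $2\le p\le 8$ and $p\ge 8$ are fine, but as it stands the proposal's broad input is unobtainable and its $k=3$ range reduction and narrow analysis are unjustified.
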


Led by the number theoretical considerations from Section \ref{numbthe} (see also the computation in Section 6 from \cite{BD4}), it seems reasonable to conjecture the following result.
\begin{conjecture} For each $k\ge 2$ we have
$$D_k(N,p)\lesssim_{\epsilon,p} N^{\frac2{k}(\frac12-\frac1p)+\epsilon},\,\,\,2\le p\le \frac{k(k+1)(k+2)}{3}.$$
\end{conjecture}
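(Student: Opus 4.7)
My plan is to set up a unified multi-scale induction that proves the conjecture simultaneously for all $k\ge 2$ and all $p$ in the stated critical range. I would introduce, for each $1\le j\le k$ and $\delta\in(0,1)$, an auxiliary decoupling constant $D_{k,j}(\delta,p)$ measuring the $L^p$ decoupling of $E^{(k)}_{[0,1]^2}g$ into caps of side length $\delta^{1/j}$ (so the conjecture is the case $j=k$, $\delta=1/N$). The ``missing'' intermediate constants $D_{k,j}$ with $j<k$ should be controlled by lower-degree information, since at scale $\delta^{1/j}$ only monomials of degree $\le j$ are genuinely oscillatory, the higher ones being essentially frozen. The goal is to run the Bourgain--Demeter--Guth machinery, adapted to the two-dimensional Parsell--Vinogradov setting, to bootstrap from all the $D_{k,j}$ with $j<k$ to the target $j=k$ bound.

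The core engine is a broad/narrow decomposition (Bourgain--Guth) reducing $D_k(N,p)$ to a multilinear version $\MLK_{k}(N,p)$ in which the $M$ caps involved are required to be ``transverse'' in a quantitative sense appropriate for $\M_{2,k}\subset\R^{k(k+3)/2}$. The multilinear estimate is in turn driven by a ball inflation lemma: one shows that at scale $R$, an $L^p$ average of $\prod_i |E_{\tau_i}g|^{2/M}$ on $R$-balls is comparable to the corresponding average on the larger anisotropic boxes dual to $\tau_i$, up to an acceptable loss. Iterating ball inflation across the dyadic scales $\delta^{1/k}\ll \delta^{1/(k-1)}\ll\cdots\ll\delta$ lets one transfer the trivial estimate at the smallest cap scale into the desired bound at the unit scale, at each stage paying only the already-established lower-level decoupling constant $D_{k,j}$.

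The decisive geometric input, and the place I expect the main obstacle, is the Brascamp--Lieb / multilinear Kakeya inequality needed to run each ball inflation step. For the Parsell--Vinogradov surface $\M_{2,k}$ one must verify that the tangent $k$-jets at $M$ sufficiently separated points $(t_i,s_i)$ satisfy the Bennett--Carbery--Tao--Guth scaling and dimension conditions with the exponent $2/M$, uniformly in the positions of the points. In one variable (the classical Vinogradov case) this reduces to nonvanishing of a Wronskian/Vandermonde; in two variables the relevant ``Jacobian'' is a block matrix built from the gradients $\nabla(t^is^j)$ for $2\le i+j\le k$ and is considerably harder to analyze, since one must identify the critical subspaces and show that all Brascamp--Lieb datum inequalities hold sharply along $\M_{2,k}$. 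I would attack this by following Guth's framework for the moment curve: explicit computation of the relevant determinants as products of factors vanishing on the diagonal, followed by a compactness/continuity argument promoting pointwise transversality to uniform Brascamp--Lieb constants.

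Once the Brascamp--Lieb step is secured, the remaining pieces are the standard bootstrap: feed the lower-degree decouplings $D_{k-1}$, $D_{k-2},\ldots$ (which serve as the $D_{k,j}$ for $j<k$ by the degeneration of $\M_{2,k}$ at small scales), combine them with the multilinear estimate, iterate, and optimize the resulting exponent. The scaling-critical exponent $p_c=k(k+1)(k+2)/3$ appears naturally because at $p=p_c$ the conjectured bound $\delta^{-2/k(1/2-1/p)+\epsilon}$ matches, up to $\delta^{-\epsilon}$, the trivial lower bound produced by testing $g\equiv 1$ against both a single cap and the full square and computing the resulting $L^{p_c}$ norms using the Knapp/wave-packet heuristic adapted to $\M_{2,k}$. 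The bootstrap should close precisely in this range, yielding the conjectured $D_k(N,p)\lesssim_{\epsilon,p}N^{\frac2k(\frac12-\frac1p)+\epsilon}$ for $2\le p\le p_c$.
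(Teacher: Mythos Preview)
The statement you are attempting to prove is stated in the paper as a \emph{conjecture}, not a theorem; the paper does not contain a proof of it in the full range $2\le p\le k(k+1)(k+2)/3$. What the paper actually proves (Theorem~\ref{yjoui0tyiu90ty8y0-r5iy5690it}) is the weaker range $2\le p\le k(k+3)-2=2n-2$, and even that is conditional on the linear-algebra Conjecture~\ref{ckconjh}, verified only for $k=2,3$. For $k=2$ the two ranges coincide ($2n-2=8=p_c$), so the paper settles the conjecture there; for $k\ge 3$ the gap between $2n-2$ and $p_c=k(k+1)(k+2)/3$ is genuine and left open.

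Your plan is therefore aimed at something strictly beyond the paper, and the outline you give (nested constants $D_{k,j}$, ball inflation, feeding lower-degree decouplings into the iteration) is indeed the natural two-dimensional analogue of the Bourgain--Demeter--Guth argument for the moment curve. But there are two concrete obstacles you are underestimating. First, the Brascamp--Lieb/Kakeya step: for the one-variable moment curve the BCCT dimension condition reduces to Vandermonde determinants and the data are ``simple'' in the sense of \cite{BCCT}; for $\M_{2,k}$ the tangent planes are two-dimensional and the relevant subspace inequalities \eqref{fek3} are far from a diagonal-product computation. The paper already needs the nontrivial Lemma~\ref{lBnew} (and poses Conjecture~\ref{ckconjh} for $k\ge 4$) merely to get the \emph{weak} transversality that yields multilinear restriction at the exponent $r=n$; the stronger, scale-dependent Brascamp--Lieb input required for a full ball-inflation hierarchy is not available by ``explicit determinant'' arguments. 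Second, even granting the multilinear input at $r=n$, the paper's iteration (Proposition~\ref{iovjurgyptn8vbgu89357893v7589ty7056893} and Section~\ref{last}) closes only when $\kappa_p\le\tfrac12$, i.e.\ $p\le 2n-2$; see the Remark after Proposition~\ref{iovjurgyptn8vbgu89357893v7589ty7056893}. Your proposed fix --- bootstrapping through the intermediate $D_{k,j}$ --- is exactly the missing idea, but making it work requires identifying the correct Brascamp--Lieb data at \emph{each} intermediate degree $j$ and proving the corresponding dimension inequalities, which is a substantial project rather than a routine adaptation of the curve case.
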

Here $N^{\frac2k}$ is the number of squares with side length $N^{-1/k}$ in a finitely overlapping cover of $[0,1]^2$. Note that we prove this conjecture when $k=2$, but when $k=3$, our estimate at $p=16$ falls short of the conjectured $p=20$ threshold. The methods in this paper also prove the above conjecture for $2\le p\le k(k+3)-2$ when $k\ge 4$, conditional to Conjecture \ref{ckconjh} (see Section \ref{Trans}) which involves purely linear algebra considerations.

For future use, we record the following trivial upper bound that follows from the Cauchy--Schwartz inequality
\begin{equation}
\label{fe3008}
D_k(N,p)\lesssim N^{\frac2k(1-\frac1p)},\text{ for } p\ge 1,\;k\ge 2.
\end{equation}
\bigskip

Theorem \ref{tfek6} is part of a program that has been initiated by the authors in \cite{BD3},  where the sharp decoupling theory has been completed for hyper-surfaces with definite second fundamental form, and also for the cone. The decoupling theory has since proved to be a very successful tool for a wide variety of problems in number theory that involve exponential sums. See  \cite{Bo}, \cite{Bo6}, \cite{BW}, \cite{BD4}, \cite{BD5}. This paper is no exception from the rule. Theorem \ref{tfek6} is in part motivated by its application to Parsell--Vinogradov  systems in two dimensions, as explained in the next section. Perhaps surprisingly, our Fourier analytic approach eliminates any appeal to number theory.

Our method also  allows to replace $\M_{2,k}$ with certain perturbed versions, making it suitable for other potential applications. This perspective of exploiting the decoupling theory for more exotic manifolds has led to new estimates on the Riemann zeta function in \cite{Bo}, \cite{BW}. See also the second part of Section \ref{numbthe} here for another application.

\bigskip

Theorem \ref{tfek6} can be seen as a generalization to two dimensions of our Theorem 1.4 from \cite{BD4}, which addresses the case $d=1$ (curves). As a result, the proof here will follow a strategy similar to the one from \cite{BD4}. At the heart of the argument lies the interplay between linear and multilinear decoupling, facilitated by the Bourgain--Guth induction on scales. Running this machinery produces two types of contributions, a transverse one and a non-transverse one. To control the transverse term we need to prove a multilinear restriction theorem for a specific two dimensional manifold in $\R^n$. Defining transversality in a manner that makes it easy to check and achieve in our application, turns out to be a rather delicate manner. A novelty in the current setting is that the non-transverse contribution comes from  neighborhoods of  zero sets of a polynomial functions  $Q(t,s)$ of degree greater than one. This forces us to work with a family of multilinear estimates, rather than just one.

In the attempt to simplify the discussion, we often run non-quantitative arguments that  rely instead on compactness. For example, in line with our previous related papers, we never care about the exact quantitative dependence on transversality of the bound in the multilinear restriction inequality. These considerations occupy sections \ref{Bra}, \ref{Trans} and \ref{se:multi}.

The key multi-scale inequality is presented in Section \ref{Maininewsecthgg}. We have decided to present it in a greater generality, to make it easily available for potential forthcoming applications.

\begin{ack*}
We thank Trevor Wooley for a few stimulating discussions and to  Jonathan Bennett for sharing the manuscript \cite{BBFL}, which plays a crucial role in the proof of our Theorem \ref{tfek2}. We thank the referee for a careful reading of the original manuscript and for making a few suggestions which led to the simplification of the arguments.  The second author  would like to thank Mariusz Mirek and Lillian Pierce for drawing his attention to the Vinogradov mean value theorem in higher dimensions.
\end{ack*}

\bigskip

\section{Number theoretical consequences}
\label{numbthe}
Here we  present two applications of Theorem \ref{tfek6}.
\subsection{Parsell--Vinogradov systems}
For each integer $s\ge 1$, denote by  $J_{s,2,2}(N)$ the number of integral solutions for the following quadratic Parsell--Vinogradov  system
$$X_1+\ldots+X_s=X_{s+1}+\ldots+X_{2s},$$
$$Y_1+\ldots+Y_s=Y_{s+1}+\ldots+Y_{2s},$$
$$X_1^2+\ldots+X_s^2=X_{s+1}^2+\ldots+X_{2s}^2,$$
$$Y_1^2+\ldots+Y_s^2=Y_{s+1}^2+\ldots+Y_{2s}^2,$$
$$X_1Y_1+\ldots+X_sY_s=X_{s+1}Y_{s+1}+\ldots+X_{2s}Y_{2s},$$
with $1\le X_i,Y_j\le N$. Note that this system is naturally associated with the manifold $\M_{2,2}$. By adding four more equations which are cubic in the variables $X_i,Y_j$ one gets a system associated with $\M_{2,3}$. A similar construction works for all $\M_{2,k}$, $k\ge 2$, and following \cite{PPW}, the corresponding number of solutions is denoted by $J_{s,k,2}$.

We will restrict attention to $k=2,3$. It was  conjectured in \cite{PPW} (see the top of page 1965) that for $s\ge 1$
\begin{equation}
\label{lhuij80iok-0809ol-=0o8=l-./p-p}
J_{s,2,2}(N)\lesssim_{\epsilon,s} N^{\epsilon}(N^{2s}+N^{4s-8}),
\end{equation}
and
\begin{equation}
\label{lhuij80iok-0809ol-=0o8=l-./p-p1}
J_{s,3,2}(N)\lesssim_{\epsilon,s} N^{\epsilon}(N^{2s}+N^{4s-20}).
\end{equation}

Theorem 1.1 in \cite{PPW} established \eqref{lhuij80iok-0809ol-=0o8=l-./p-p} for $s\ge 15$ and \eqref{lhuij80iok-0809ol-=0o8=l-./p-p1} for $s\ge 36$. Here we will prove the following two estimates.
\begin{theorem}
Inequality \eqref{lhuij80iok-0809ol-=0o8=l-./p-p}  holds  in the whole range $s\ge 1$. Inequality \eqref{lhuij80iok-0809ol-=0o8=l-./p-p1} holds for $1\le s\le 8$.
\end{theorem}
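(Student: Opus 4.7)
The plan is to deduce both inequalities from Theorem \ref{tfek6} by the Bourgain--Demeter mechanism that converts an $\ell^p$-decoupling inequality at scale $N$ into a mean-value bound for the Weyl sum underlying $J_{s,k,2}(N)$.

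\textit{Step 1 (Lifting to an extension operator).} Fix a Schwartz bump $\psi$ with $\hat\psi$ compactly supported and $\psi(0)=1$, and set
$$g_N(t,s)=\sum_{1\le X,Y\le N}\psi\bigl(N(t-X/N),\,N(s-Y/N)\bigr).$$
Let $B\subset\R^n$ be the axis-aligned box whose side length in the coordinate dual to the monomial $t^is^j$ equals $N^{i+j}$. After the anisotropic dilation $y_\alpha=N^{|\alpha|}x_\alpha$, Riemann summation gives $|E^{(k)}_{[0,1]^2}g_N(y)|\sim N^{-2}|S_N(x)|$, where $S_N$ is the Weyl sum defining $J_{s,k,2}(N)$. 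Since $\int_{\T^n}|S_N|^{2s}=J_{s,k,2}(N)$, unwinding the change of variables produces
$$J_{s,k,2}(N)\sim N^{4s-\sigma_k}\,\|E^{(k)}_{[0,1]^2}g_N\|_{L^{2s}(B)}^{2s},\qquad \sigma_k=\tfrac{k(k+1)(k+2)}{3},$$
and tiling $B$ by balls $B_N$ of radius $N$ reduces matters to a uniform estimate for $\|E^{(k)}_{[0,1]^2}g_N\|_{L^{2s}(B_N)}$.

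\textit{Step 2 (Critical decoupling).} Take $p=8$ when $k=2$, where Theorem \ref{tfek6}(1) gives $D_2(N,8)\lesssim N^{3/8+\epsilon}$, and $p=16$ when $k=3$, where Theorem \ref{tfek6}(2) gives $D_3(N,16)\lesssim N^{7/24+\epsilon}$. For each cap $\Delta$ of side $N^{-1/k}$, the piece $E_\Delta^{(k)}g_N$ encodes the $\sim N^{2-2/k}$ lattice points of $\Delta$; a parabolic rescaling $\Delta\to[0,1]^2$ normalizes it to a smaller-scale Weyl sum, bounded by the trivial $L^2$ identity. Summing over the $N^{2/k}$ caps, applying the decoupling inequality, and inverting the correspondence of Step~1 produces the endpoint bounds
$$J_{4,2,2}(N)\lesssim_\epsilon N^{8+\epsilon},\qquad J_{8,3,2}(N)\lesssim_\epsilon N^{16+\epsilon}.$$

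\textit{Step 3 (Interpolation and the large-$s$ range).} Since $J_{1,k,2}(N)=N^2$ trivially (the $s=1$ system forces $(X_1,Y_1)=(X_2,Y_2)$), H\"older interpolation between $s=1$ and the critical exponent yields $J_{s,k,2}(N)\lesssim_\epsilon N^{2s+\epsilon}$ for $1\le s\le 4$ when $k=2$ and for $1\le s\le 8$ when $k=3$. This already establishes \eqref{lhuij80iok-0809ol-=0o8=l-./p-p1} in full and the subdiagonal part of \eqref{lhuij80iok-0809ol-=0o8=l-./p-p}. For $s\ge 4$ in the $k=2$ case, rerun Step 2 at $p=2s$ using the large-$p$ clause $D_2(N,p)\lesssim N^{1-5/p+\epsilon}$ of Theorem \ref{tfek6}(1); this delivers $J_{s,2,2}(N)\lesssim_\epsilon N^{4s-8+\epsilon}$, which dominates $N^{2s}$ for $s\ge 4$ and completes \eqref{lhuij80iok-0809ol-=0o8=l-./p-p}.

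\textit{Main obstacle.} The most delicate step is the exponent bookkeeping in Steps 1--2: the Jacobian of the anisotropic dilation, the cap count $N^{2/k}$, the density of lattice points per cap, and the decoupling loss $D_k(N,p)^p$ must align so that all powers of $N$ cancel to the conjectured exponent. This is analogous to the one-dimensional Vinogradov deduction in \cite{BD4}, but has to be redone here for the two-dimensional monomial basis, in particular to account for the cross-monomials $st$, $t^2s$, $ts^2$.
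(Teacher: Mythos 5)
Your overall strategy (decoupling for $\M_{2,k}$ $\Rightarrow$ mean values for the Weyl sum) is the paper's strategy, and your Step 3 bookkeeping at the end is the right numerology, but Step 2 has a genuine gap: you apply the decoupling at scale $N$, i.e. on balls $B_N$ of radius $N$ with caps $\Delta$ of side $N^{-1/k}$, so each cap still contains $\sim N^{2-2/k}$ lattice points, and you then dispose of each cap by ``a parabolic rescaling \ldots bounded by the trivial $L^2$ identity''. That per-cap bound is far too lossy. Concretely, for $k=2$, $s=4$: on a ball of radius $N$ one has $\|E_\Delta^{(2)}g_N\|_{L^\infty}\sim N^{-1}$ and $\|E_\Delta^{(2)}g_N\|_{L^2(B_N)}^2\sim N^{2}$, so the trivial bound gives $\|E_\Delta^{(2)}g_N\|_{L^8(B_N)}^8\lesssim N^{-4}$; summing over the $N$ caps, multiplying by $D_2(N,8)^8\sim N^{3+\epsilon}$, and summing over the $\sim N^3$ balls of radius $N$ needed to tile the period box yields $J_{4,2,2}(N)\lesssim N^{11+\epsilon}$, not $N^{8+\epsilon}$. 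The point is that each cap piece is itself a sub-Weyl sum over an $N^{1-1/k}\times N^{1-1/k}$ grid, whose $2s$-th moment over the full period box is a mean value $J_{s,k,2}$ at scale $N^{1-1/k}$ (by affine invariance of the system), not something controlled by $L^2$ orthogonality; to close your argument you would have to either run an induction on scales for these pieces or iterate the decoupling down to caps of side $N^{-1}$ via the rescaling in Proposition \ref{rvty090-0w=qw}. The paper avoids this entirely by applying the decoupling once at scale $N^k$ (i.e. $D_2(N^2,2s)$, caps of side $N^{-1}$ containing a single point of the $\frac1N$-grid), which is exactly the discrete restriction estimate \eqref{fek19}; then $J_{s,2,2}(N)\lesssim N^2 D_2(N^2,2s)^{2s}$ gives both clauses of Theorem \ref{tfek6} directly.

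A secondary issue is Step 1: the pointwise comparability $|E^{(k)}_{[0,1]^2}g_N(y)|\sim N^{-2}|S_N(x)|$ cannot hold on the whole period box $B$ (the internal phases of the individual wave packets are $O(1)$ or larger and vary with $(X,Y)$, so the packets are modulated differently), and it is precisely the direction $J\lesssim N^{4s-\sigma_k}\|E^{(k)}_{[0,1]^2}g_N\|_{L^{2s}(B)}^{2s}$ that you need. The paper sidesteps this by proving \eqref{fek19} for arbitrary points $(s_i,t_j)$ in the $\frac1N$-cells (via indicator functions shrinking to points) and then lower-bounding the mean value by the solution count using a Schwartz weight with positive Fourier transform, as in Corollary \ref{cfek4}; this also counts near-solutions, which is the extra generality the paper records. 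Your interpolation in Step 3 between $s=1$ and the endpoint is fine (and the large-$s$ clause for $k=2$ follows as you say once the decoupling is applied at the correct scale $N^2$), so the fix is localized to Steps 1--2.
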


Trevor Wooley has pointed out to us that there is an alternative proof for \eqref{lhuij80iok-0809ol-=0o8=l-./p-p} at the critical exponent $s=5$, using the Siegel mass formula. This type of argument does not work for $k\ge 3$.
When $k\ge 4$, our argument gives the expected estimate for $J_{s,k,2}$ when $1\le s\le \frac{k(k+3)}{2}-1$, conditional to Conjecture \ref{ckconjh}. This range is rather poor for large values of $k$ and this did not justify putting  any serious effort into proving Conjecture \ref{ckconjh} for $k\ge 4$.
\bigskip

To simplify numerology and notation, we prove the above theorem when $k=2$. The case $k=3$ is treated very similarly.

Our approach will in fact prove a much more general result, see Corollary \ref{cfek4} below. We start with the following discrete restriction estimate which follows quite easily from our Theorem \ref{tfek6}.

\begin{theorem}
For each $1\le i\le N$, let $t_i,s_i$ be two points in $(\frac{i-1}{N},\frac{i}{N}]$. Then for each $R\gtrsim N^{2}\ge 1$, each ball $B_R$ with radius $R$ in $\R^5$, each $a_{i,j}\in\C$ and each $p\ge 2$  we have
$$(\frac1{|B_R|}\int_{B_R}|\sum_{i=1}^N\sum_{j=1}^Na_{i,j}e(x_1s_i+x_2t_j+x_3s_i^2+x_4t_j^2+x_5s_it_j)|^{p}dx_1\ldots dx_5)^{\frac1p}\lesssim$$
\begin{equation}
\label{fek19}
 D_2(N^2,p)\|a_{i,j}\|_{l^p(\{1,\ldots,N\}^2)},
\end{equation}
and the implicit constant does not depend on $N$, $R$ and $a_{i,j}$.
\end{theorem}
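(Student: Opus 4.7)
The plan is to derive the inequality directly from Theorem~\ref{tfek6}(1) applied at scale $N^2$, by testing the decoupling inequality against a function $g$ concentrated near the sample points $(s_i,t_j)$, and then to pass from a weighted $L^p(w_{B_{N^2}})$ estimate to the sharp average over $B_R$ by a routine tiling argument.

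First I would partition $[0,1]^2$ into the $N^2$ sub-squares $\Delta_{i,j}=(\tfrac{i-1}{N},\tfrac{i}{N}]\times(\tfrac{j-1}{N},\tfrac{j}{N}]$, whose side length is exactly $N^{-1}=(N^2)^{-1/2}$, so that $(s_i,t_j)\in\Delta_{i,j}$. For a very small parameter $\eta>0$, let $g_\eta=\sum_{i,j}a_{i,j}\eta^{-2}\chi_{B((s_i,t_j),\eta)}$, where the balls are chosen small enough to lie inside the respective $\Delta_{i,j}$. A direct phase estimate shows that, for each fixed $x$, as $\eta\to 0$,
\[
E^{(2)}_{\Delta_{i,j}}g_\eta(x)\longrightarrow a_{i,j}\,e(x_1s_i+x_2t_j+x_3s_i^2+x_4t_j^2+x_5s_it_j),
\]
with the uniform bound $|E^{(2)}_{\Delta_{i,j}}g_\eta(x)|\le|a_{i,j}|$. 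Summing in $(i,j)$ yields the analogous pointwise convergence of $E^{(2)}_{[0,1]^2}g_\eta(x)$ to the target exponential sum $F(x)$.

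Next I would apply Theorem~\ref{tfek6}(1) at scale $N^2$ to $g_\eta$ and pass to the limit $\eta\to 0$ inside each of the weighted $L^p$ norms. Dominated convergence applies because $\|g_\eta\|_{L^1}\le\sum|a_{i,j}|$ is a fixed constant and $w_{B_{N^2}}$ is integrable, so in the limit
\[
\|F\|_{L^p(w_{B_{N^2}})}\lesssim D_2(N^2,p)\Bigl(\sum_{i,j}|a_{i,j}|^p\,\|1\|_{L^p(w_{B_{N^2}})}^p\Bigr)^{1/p}\lesssim D_2(N^2,p)\,|B_{N^2}|^{1/p}\,\|a_{i,j}\|_{l^p}.
\]

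Finally, to upgrade this to the sharp average over $B_R$ with $R\gtrsim N^2$, I would cover $B_R$ by a finitely overlapping collection of $\approx|B_R|/|B_{N^2}|$ balls $B_{N^2}^{(k)}$ of radius $N^2$ and use $w_{B_{N^2}^{(k)}}\gtrsim 1$ on $B_{N^2}^{(k)}$ to get $\int_{B_{N^2}^{(k)}}|F|^p\lesssim D_2(N^2,p)^p\,|B_{N^2}|\,\|a_{i,j}\|_{l^p}^p$. Summing over $k$ gives $\int_{B_R}|F|^p\lesssim |B_R|\,D_2(N^2,p)^p\,\|a_{i,j}\|_{l^p}^p$, which is the desired inequality after dividing by $|B_R|$ and taking $p$-th roots. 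I do not anticipate any substantial obstacle: this is the standard transfer from decoupling to discrete restriction at the critical scale $R\gtrsim N^2$, the only delicate point being to verify that the phase approximation on each $\Delta_{i,j}$ can be made arbitrarily good uniformly on the support of $w_{B_{N^2}}$ by choosing $\eta$ small.
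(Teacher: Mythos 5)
Your argument is correct and follows essentially the same route as the paper: apply the decoupling theorem at scale $N^2$ to an approximation of point masses at the points $(s_i,t_j)$, pass to the limit, and transfer from balls of radius $N^2$ to $B_R$ by a finitely overlapping cover (the paper does the tiling first, via $\sum w_{B_{N^2}}\lesssim w_{B_R}$, and takes the limit last, but this ordering is immaterial). The only cosmetic caveat is that uniformity of the phase approximation in $x$ is neither available nor needed, since your dominated convergence argument with the integrable weight already handles the limit.
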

\begin{proof}
Given $B_R$, let $\B$ be a finitely overlapping cover of $B_R$ with balls $B_{N^2}$. An elementary computation shows that
\begin{equation}
\label{fek20}
\sum_{B_{N^2}\in\B}w_{B_{N^2}}\lesssim w_{B_R},
\end{equation}
with the implicit constant independent of $N,R$. Invoking  Theorem \ref{tfek6} for each $B_{N^2}\in\B$, then summing up and using
\eqref{fek20} we obtain
$$
\|E_{[0,1]^2}g\|_{L^p(B_R)}\lesssim $$
$$
 D_2(N^2,p)(\sum_{\Delta\subset [0,1]^2\atop{l(\Delta)=N^{-1}}}\|E_\Delta g\|_{L^p(w_{B_{R}})}^p)^{1/p}.
$$
Use this inequality with $$g=\frac1{\tau^2}\sum_{i=1}^N\sum_{j=1}^Na_{i,j}1_{B_{i,j,\tau}},$$
where $B_{i,j,\tau}$ is the ball in $\R^2$ centered at $(s_i,t_j)$ with radius $\tau.$ Then let $\tau$ go to $0$.

\end{proof}
For each $1\le i\le N$ consider some real  numbers $i-1< \tilde{X}_i,\tilde{Y}_i\le i$. We do not insist that $\tilde{X}_i,\tilde{Y}_i$ be integers. Let $S_X=\{\tilde{X}_1,\ldots,\tilde{X}_N\}$ and $S_Y=\{\tilde{Y}_1,\ldots,\tilde{Y}_N\}$. For each $s\ge 1$, denote by  $\tilde{J}_{s,2,2}(S_X,S_Y)$ the number of  solutions  of the following system of inequalities
$$|X_1+\ldots+X_s-(X_{s+1}+\ldots+X_{2s})|\le \frac1{N},$$
$$|Y_1+\ldots+Y_s-(Y_{s+1}+\ldots+Y_{2s})|\le \frac1{N},$$
$$|X_1^2+\ldots+X_s^2-(X_{s+1}^2+\ldots+X_{2s}^2)|\le 1,$$
$$|Y_1^2+\ldots+Y_s^2-(Y_{s+1}^2+\ldots+Y_{2s}^2)|\le 1,$$
$$|X_1Y_1+\ldots+X_sY_s-(X_{s+1}Y_{s+1}+\ldots+X_{2s}Y_{2s})|\le 1,$$
with $X_i\in S_X, Y_j\in S_Y$.
\begin{corollary}
\label{cfek4}
For each integer $s\ge 1$ and each $S_X,S_Y$ as above we have that
$$\tilde{J}_{s,2,2}(S_X,S_Y)\lesssim_{\epsilon,s}N^{\epsilon}(N^{2s}+N^{4s-8}),$$
where the implicit constant does not depend on $S_X,S_Y$.
\end{corollary}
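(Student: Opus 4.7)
The plan is to reduce the counting problem to an $L^{2s}$ bound for the Weyl sum on a ball of radius $N^2$, apply the discrete restriction estimate \eqref{fek19} with all $a_{i,j}=1$, and then split into the two regimes provided by Theorem \ref{tfek6}(1).

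First I would rescale by setting $s_i=\tilde{X}_i/N$ and $t_j=\tilde{Y}_j/N$, so that $s_i,t_j\in(\frac{i-1}{N},\frac{i}{N}]$, matching the hypothesis of the discrete restriction theorem. Consider
$$f(x)=\sum_{i,j=1}^N e(x_1 s_i+x_2 t_j+x_3 s_i^2+x_4 t_j^2+x_5 s_i t_j).$$
Expanding $|f|^{2s}$ gives a sum over $4s$-tuples of indices, and the exponent becomes $e(x\cdot\vec{\alpha})$, where for a tuple encoding a choice of $(X_k,Y_k)_{k=1}^{2s}$ the coordinates of $\vec{\alpha}$ are exactly $N^{-2}$ times the five quantities appearing on the left-hand sides of the defining inequalities of $\tilde{J}_{s,2,2}(S_X,S_Y)$. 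Therefore a tuple is counted by $\tilde{J}_{s,2,2}$ exactly when $|\alpha_k|\lesssim N^{-2}$ for $k=1,\ldots,5$.

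Next I would use the standard Fourier positivity trick: choose a non-negative Schwartz function $\phi\ge 0$ concentrated on $B_{N^2}$, with $\phi\lesssim|B_{N^2}|^{-1}w_{B_{N^2}}$ and $\widehat{\phi}\ge 0$ everywhere, $\widehat{\phi}\ge 1$ on $\{|\alpha|\lesssim N^{-2}\}$. Integrating $|f|^{2s}\phi$ yields a sum of $\widehat{\phi}(\vec{\alpha})$ over tuples, which is $\gtrsim \tilde{J}_{s,2,2}(S_X,S_Y)$. Hence
$$\tilde{J}_{s,2,2}(S_X,S_Y)\lesssim\frac{1}{|B_{N^2}|}\int_{\R^5}|f|^{2s}w_{B_{N^2}}\,dx.$$
Invoking the previous theorem with $p=2s$, $R=N^2$ and $a_{i,j}=1$ gives
$$\tilde{J}_{s,2,2}(S_X,S_Y)\lesssim D_2(N^2,2s)^{2s}\,\|1\|_{l^{2s}(\{1,\ldots,N\}^2)}^{2s}=D_2(N^2,2s)^{2s}\,N^2.$$

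Finally I would insert the two bounds from Theorem \ref{tfek6}(1). For $1\le s\le 4$ (i.e. $2s\le 8$), $D_2(N^2,2s)\lesssim_\epsilon N^{1-1/s+\epsilon}$, so $D_2^{2s}N^2\lesssim N^{2s+\epsilon}$. For $s\ge 4$ (i.e. $2s\ge 8$), $D_2(N^2,2s)\lesssim_\epsilon N^{2-5/s+\epsilon}$, so $D_2^{2s}N^2\lesssim N^{4s-8+\epsilon}$. Combining these two regimes and absorbing the $\epsilon$ gives the claimed $N^{\epsilon}(N^{2s}+N^{4s-8})$, with constants independent of the choices of $\tilde{X}_i,\tilde{Y}_j$. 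There is no real obstacle beyond checking that the Fourier-side tolerance $N^{-2}$ matches the arithmetic tolerances of $\frac{1}{N}$ in the linear variables and $1$ in the quadratic variables; the whole point of working at scale $R=N^2$ is precisely to achieve this matching, which is why Theorem~\ref{tfek6} delivers the full range $s\ge 1$ at no extra cost.
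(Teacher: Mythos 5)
Your argument is correct and is essentially the paper's own proof: rescale $\tilde X_i,\tilde Y_j$ into $[0,1]^2$, apply \eqref{fek19} with $a_{i,j}=1$ at scale $R=N^2$, lower-bound the resulting weighted $L^{2s}$ integral by $\tilde J_{s,2,2}(S_X,S_Y)$ via a Schwartz function with nonnegative Fourier transform equal to at least $1$ near the origin, and then insert the two regimes of Theorem \ref{tfek6}(1); the paper merely keeps the normalizing factors $N^8$ and $N^{10}$ explicit instead of folding them into $\phi$. One small slip: the linear coordinates of $\vec{\alpha}$ are $N^{-1}Z_1$, $N^{-1}Z_2$ rather than $N^{-2}Z_1$, $N^{-2}Z_2$, but this is harmless since the tolerances $|Z_1|,|Z_2|\le\frac1N$ and $|Z_3|,|Z_4|,|Z_5|\le1$ still yield $|\alpha_k|\lesssim N^{-2}$ for all $k$, which is exactly the scale matching you invoke.
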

\begin{proof}Let $\phi:\R^5\to [0,\infty)$ be a positive Schwartz function with  positive Fourier transform  satisfying $\widehat{\phi}(\xi)\ge1$ for $|\xi|\lesssim 1$.
Define $\phi_{N}(x)=\phi(\frac{x}N)$. Using the Schwartz decay, \eqref{fek19} with $a_{i,j}=1$ implies that for each $s\ge 1$
$$(\frac1{|B_{N^2}|}\int_{\R^5}\phi_{N^2}(x_1,\ldots,x_5)|\sum_{i=1}^N\sum_{j=1}^Ne(x_1s_i+x_2t_j+x_3s_i^2+x_4t_j^2+x_5s_it_j)|^{2s}dx_1\ldots dx_5)^{\frac1{2s}}\lesssim$$
\begin{equation}
\label{fek21}
 D_2(N^2,2s)N^{\frac1s},
\end{equation}
whenever $s_i,t_i\in [\frac{i-1}{N},\frac{i}{N})$. Apply \eqref{fek21} to $s_i=\frac{\tilde{X}_i}{N}$ and $t_j=\frac{\tilde{Y}_j}{N}$.
Let now $$\phi_{N, 1}(x_1,\ldots,x_5)=\phi(\frac{x_1}N,\frac{x_2}N, {x_3}, {x_4},{x_5}).$$

After making a change of variables and expanding the product,  the term
$$\int_{\R^5}\phi_{N^2}(x_1,\ldots,x_5)|\sum_{i=1}^N\sum_{j=1}^Ne(x_1s_i+x_2t_j+x_3s_i^2+x_4t_j^2+x_5s_it_j)|^{2s}dx_1\ldots dx_5$$
can be written as  the sum over all $X_i\in S_X,Y_j\in S_Y$ of
$$N^8\int_{\R^5}\phi_{N, 1}(x_1,\ldots,x_5)e(x_1Z_1+x_2Z_2+x_3Z_3+x_4Z_4+x_5Z_5)dx_1\ldots dx_5,$$
where
$$Z_1=X_1+\ldots+X_s-(X_{s+1}+\ldots+X_{2s}),$$
$$Z_2=Y_1+\ldots+Y_s-(Y_{s+1}+\ldots+Y_{2s}),$$
$$Z_3=X_1^2+\ldots+X_s^2-(X_{s+1}^2+\ldots+X_{2s}^2),$$
$$Z_4=Y_1^2+\ldots+Y_s^2-(Y_{s+1}^2+\ldots+Y_{2s}^2),$$
$$Z_5=X_1Y_1+\ldots+X_sY_s-(X_{s+1}Y_{s+1}+\ldots+X_{2s}Y_{2s}).$$
Each such term is equal to
$$N^{10}\widehat{\phi}(NZ_1,NZ_2,Z_3,Z_4, Z_5).$$
Recall that this is always positive, and in fact greater than $N^{10}$ at least $\tilde{J}_{s,2,2}(S_X,S_Y)$ times. Going back to \eqref{fek21}, it follows by invoking Theorem \ref{tfek6} that
$$\tilde{J}_{s,2,2}(S_X,S_Y)\lesssim D_2(N^2,2s)N^{\frac1s}\lesssim_{\epsilon,s} N^{\epsilon}(N^{2s}+N^{4s-8}).$$

\end{proof}
\subsection{Perturbed manifolds}
Theorem \ref{tfek6} with $k=2$ remains true if $\M_{2,2}$ is replaced with
$$\M=\{(t,s,\Psi^1(t,s),\Psi^2(t,s),\Psi^3(t,s))\},$$
assuming the real-valued $\Psi^i$ satisfy the non-degeneracy condition
$$\det \begin{bmatrix}\Psi^{1}_{tt}&\Psi^{1}_{ss}&\Psi^{1}_{ts}\\ \Psi^{2}_{tt}&\Psi^{2}_{ss}&\Psi^{2}_{ts}\\ \Psi^{3}_{tt}&\Psi^{3}_{ss}&\Psi^{3}_{ts}\end{bmatrix}(t,s)\not=0.$$
We refer the reader to \cite{BD5} for the details on a related scenario. Applying this to
$$(\Psi^1(t,s),\Psi^2(t,s),\Psi^3(t,s))=(t^4,s^4,t^2s^2)$$
for $t,s\sim 1$ we can prove the following result.
\begin{corollary}
The system of inequalities
$$X_1+\ldots+X_4=X_{5}+\ldots+X_{8},$$
$$Y_1+\ldots+Y_4=Y_{5}+\ldots+Y_8,$$
$$|X_1^4+\ldots+X_4^4-(X_{5}^4+\ldots+X_{8}^4)|\lesssim N^2,$$
$$|Y_1^4+\ldots+Y_4^4-(Y_{5}^4+\ldots+Y_{8}^4)|\lesssim N^2,$$
$$|X_1^2Y_1^2+\ldots+X_4^2Y_4^2-(X_{5}^2Y_{5}^2+\ldots+X_{8}^2Y_{8}^2)|\lesssim N^2,$$
has $O_\epsilon(N^{8+\epsilon})$ integral solutions $X_i,Y_j\sim N$.
\end{corollary}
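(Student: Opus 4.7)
The plan is to run the argument of Corollary~\ref{cfek4} for the perturbed manifold $\M=\{(t,s,t^4,s^4,t^2s^2):(t,s)\in[1,2]^2\}$. First I would verify the non-degeneracy condition displayed above: a direct calculation gives
$$\det\begin{bmatrix}12t^2 & 0 & 0\\ 0 & 12s^2 & 0\\ 2s^2 & 2t^2 & 4ts\end{bmatrix}=576\,t^3s^3,$$
which is non-zero for $t,s\sim 1$. This activates the claimed perturbed version of Theorem~\ref{tfek6}(1); write $D(N,p)$ for the corresponding decoupling constant for $\M$, so that $D(N,8)\lesssim_\epsilon N^{3/8+\epsilon}$.

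Exactly as in the discrete restriction argument leading to Corollary~\ref{cfek4}, this decoupling bound passes to a moment estimate for $1/N$-separated nodes $t_X=X/N$, $s_Y=Y/N$ with $X,Y\in\{N,\ldots,2N\}$. Applied at $p=8$ with all amplitudes equal to $1$, it reads, for every ball $B_R\subset\R^5$ of radius $R\gtrsim N^2$,
$$\left(\frac{1}{|B_R|}\int_{B_R}\Big|\sum_{X,Y}e\bigl(\tfrac{x_1X+x_2Y}{N}+\tfrac{x_3X^4+x_4Y^4+x_5X^2Y^2}{N^4}\bigr)\Big|^8 dx\right)^{1/8}\lesssim D(N^2,8)\,N^{1/4}\lesssim_\epsilon N^{1+\epsilon}.$$
Call the sum inside $F(x)$.

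Now I would run the Fourier-positivity step from the proof of Corollary~\ref{cfek4}. Pick a non-negative Schwartz $\phi$ on $\R^5$ with non-negative Fourier transform and $\widehat\phi\geq 1$ on the unit ball, and set $\phi_{N^2}(x)=\phi(x/N^2)$. Expanding $|F|^8$ and integrating each term against $\phi_{N^2}$, every $16$-tuple $(X_1,\ldots,X_8,Y_1,\ldots,Y_8)$ with entries of size $N$ contributes exactly
$$N^{10}\,\widehat\phi\bigl(NZ_1,\,NZ_2,\,Z_3/N^2,\,Z_4/N^2,\,Z_5/N^2\bigr),$$
where $Z_1,\ldots,Z_5$ are the five signed sums appearing in the statement. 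Tuples satisfying those inequalities have all five arguments of $\widehat\phi$ in $[-1,1]$, and so contribute $\geq N^{10}$; every other tuple contributes a non-negative amount by positivity of $\widehat\phi$. Upper-bounding $\int\phi_{N^2}|F|^8$ via the previous display by $|B_{N^2}|\cdot D(N^2,8)^8\cdot N^2\lesssim N^{18+O(\epsilon)}$ and comparing with the lower bound $N^{10}$ times the number of solutions yields the claimed $O_\epsilon(N^{8+\epsilon})$ count.

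The one delicate point is the alignment of scales. The tolerance $N^2$ in the three quartic/mixed inequalities is exactly dual, via $\widehat\phi$ on the unit ball, to the bump scale $N^2$ in the three higher Fourier variables of $\phi_{N^2}$: a smaller tolerance would push the Fourier arguments outside the region where $\widehat\phi\geq 1$, while a larger one would weaken the count past $N^8$. The specific combination $(t^4,s^4,t^2s^2)$ is what makes both the non-degeneracy condition and this scale matching hold simultaneously, and it explains why the exponent $N^2$, rather than the trivial $N^4$, appears on the right-hand side of the last three inequalities in the statement.
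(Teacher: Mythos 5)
Your argument is correct and is essentially the paper's own (sketched) proof: check the non-degeneracy determinant for $(t^4,s^4,t^2s^2)$ (your $576\,t^3s^3$ is right), invoke the perturbed version of Theorem \ref{tfek6} with $k=2$, and rerun the discrete restriction and Fourier-positivity argument of Corollary \ref{cfek4} at $p=8$ and spatial scale $N^2$, with the exponent count $|B_{N^2}|\,D(N^2,8)^8\,N^2/N^{10}\lesssim N^{8+O(\epsilon)}$ exactly as you computed. The only slip is in your closing heuristic: a tolerance smaller than $N^2$ would keep the arguments $Z_i/N^2$ inside the region where $\widehat\phi\ge 1$ rather than push them outside (so the same upper bound would still apply, it just would not improve); since that remark is commentary and not part of the chain of estimates, it does not affect the validity of the proof.
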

To understand the numerology, note that there are $\sim N^8$ trivial solutions. The proof follows considerations similar to those in the previous subsection. See also the proof of Theorem 2.18 in \cite{BD3}.

\bigskip

\section{A Brascamp--Lieb inequality}
\label{Bra}

For $1\le j\le m$, let $V_j$ be $n_j-$dimensional affine subspaces of $\R^n$ and let $l_j:\R^n\to V_j$ be surjective affine transformations.  Define the multilinear functional
$$\Lambda(f_1,\ldots,f_m)=\int_{\R^n}\prod_{j=1}^mf_j(l_j(x))dx$$
for $f_j:V_j\to\C$. Each $V_j$ will be equipped with the $n_j-$dimensional Lebesgue measure. We recall the following theorem from \cite{BCCT}.
\begin{theorem}
\label{BCCT}
Given a vector $\p=(p_1,\ldots,p_m)$ with $p_j\ge 1$, we have that
\begin{equation}
\label{fek17}
\sup_{f_j\in L^{p_j}({V_j})}\frac{|\Lambda(f_1,\ldots,f_m)|}{\prod_{j=1}^m\|f_j\|_{L^{p_j}}}<\infty
\end{equation}
if and only if
\begin{equation}
\label{fek1}
n=\sum_{j=1}^m\frac{n_j}{p_j}
\end{equation}
and the following transversality condition is satisfied
\begin{equation}
\label{fek2}
\dim(V)\le \sum_{j=1}^m\frac{\dim(l_j(V))}{p_j}, \text{ for every subspace }V\subset \R^n.
\end{equation}
\end{theorem}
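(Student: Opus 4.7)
This is the main theorem of Bennett--Carbery--Christ--Tao; my plan mirrors theirs. The argument splits into two halves: verifying necessity of the two conditions by test-function calculations, and deriving sufficiency via a heat-flow monotonicity followed by a reduction to Gaussian inputs.

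\medskip

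\emph{Necessity.} For the scaling identity \eqref{fek1}, I would test \eqref{fek17} with $f_j = \mathbf{1}_{B_j(r)}$, where $B_j(r) \subset V_j$ is the ball of radius $r$ centered at $l_j(0)$. Since each $l_j$ is surjective and the system is linear, the set $\{x\in\R^n : l_j(x)\in B_j(r)\text{ for every }j\}$ has volume growing like $r^n$, while $\|f_j\|_{p_j}^{p_j}\sim r^{n_j}$. Requiring a bounded ratio both as $r\to\infty$ and $r\to 0^+$ forces $n = \sum_j n_j/p_j$. For the subspace condition \eqref{fek2}, fix $V\subset\R^n$ and test with $f_j$ the indicator of the $\delta$-neighborhood inside $V_j$ of $l_j(V)$; the $L^{p_j}$ norms scale like $\delta^{(n_j-\dim l_j(V))/p_j}$, while the multilinear integral is at least $\sim \delta^{n-\dim V}$ because transversality is the only way the product can vanish. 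Sending $\delta\to 0$ recovers \eqref{fek2}.

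\medskip

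\emph{Sufficiency.} This is the substantive half. I would replace each $f_j$ by its heat evolution $f_{j,t}:=f_j\ast G_t$ on $V_j$ and form the Brascamp--Lieb quotient
$$
Q(t) \;=\; \frac{\Lambda(f_{1,t},\ldots,f_{m,t})}{\prod_{j=1}^m \|f_{j,t}\|_{L^{p_j}(V_j)}}.
$$
A direct differentiation, together with a Cauchy--Schwarz estimate, rewrites $(\log Q)'(t)$ as an integrated quadratic form in the gradients of the $f_{j,t}$. Showing this form is pointwise nonnegative is precisely where the subspace hypothesis \eqref{fek2} enters, via a min/max identity relating the maps $l_j$ to admissible flows of positive-definite matrices on the $V_j$. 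Monotonicity of $Q$ in $t$, combined with the fact that heat-evolved data look increasingly Gaussian, reduces the general inequality to Gaussian inputs. For Gaussians, \eqref{fek17} becomes (by Lieb's theorem) a finite-dimensional determinantal inequality on positive-definite matrices, which follows from \eqref{fek2} via the Ball--Barthe factorization / Legendre transform argument.

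\medskip

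\emph{Main obstacle.} The hard part is the heat-flow monotonicity: writing $(\log Q)'(t)$ as a manifestly nonnegative integrand in a way that cleanly uses \eqref{fek2}. I would expect essentially all of the difficulty to live there. Once monotonicity is in place, the Gaussian optimization reduces to standard convexity of $\log\det$, and the necessity calculations are routine scaling. I would not attempt to track the sharp constant, since \eqref{fek17} only requires finiteness of the supremum.
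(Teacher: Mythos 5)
The paper does not actually prove Theorem \ref{BCCT}: it is quoted from \cite{BCCT}, so there is no in-paper argument to measure you against, and the comparison below is with the proof in that reference. Your necessity half is essentially the standard argument and is fine up to small repairs: in the test for \eqref{fek2} you must intersect the $\delta$-neighborhood of $l_j(V)$ with a fixed bounded set (otherwise $\|f_j\|_{p_j}=\infty$ as soon as $\dim l_j(V)\ge 1$); the lower bound $\Lambda\gtrsim \delta^{n-\dim V}$ follows simply because the constraint set contains a truncated $c\delta$-neighborhood of $V$ (the clause ``transversality is the only way the product can vanish'' is not the reason and is not needed); the resulting exponent inequality yields \eqref{fek2} only after combining with \eqref{fek1}; and since the $l_j$ here are affine one should first translate so as to reduce to linear surjections.

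The sufficiency half has a genuine gap precisely at the step you single out. For general surjections and exponents, evolving each $f_j$ by the ordinary heat flow on $V_j$ does not make $Q(t)$ monotone, and nonnegativity of the derivative is not a pointwise consequence of \eqref{fek2}: in Bennett--Carbery--Christ--Tao the monotonicity computation works in the geometric case $\sum_j p_j^{-1}l_j^{*}l_j=\mathrm{Id}$ (or with heat kernels adapted to a near-extremising Gaussian), and what it delivers is Lieb's theorem --- that the supremum in \eqref{fek17} is exhausted by centered Gaussians --- not finiteness. The crux of the finiteness characterization is then to show that the Gaussian ratio is bounded under \eqref{fek1} and \eqref{fek2}, and this is not ``standard convexity of $\log\det$'': it is carried out by a structural induction on critical subspaces (subspaces where \eqref{fek2} holds with equality), which factors the datum into lower-dimensional Brascamp--Lieb data; the Ball--Barthe transport/factorization you invoke covers the rank-one case $n_j=1$, whereas here $n_j\ge 2$ (indeed $n_j=2$ in this paper's application). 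The cited paper \cite{BCCT} in fact establishes the finiteness criterion by an induction of this kind, factoring through critical subspaces, rather than by heat flow. So as written your plan omits the mechanism by which the subspace condition is actually used; to complete it you would either have to run the critical-subspace induction on the Gaussian problem after invoking Lieb's theorem, or follow the inductive argument of \cite{BCCT} directly.
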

When all $p_j$ are equal to some $p$, an equivalent way to write \eqref{fek17} is
\begin{equation}
\label{fek18}
\sup_{g_j\in L^{2}({V_j})}\frac{\|(\prod_{j=1}^m g_j\circ l_j)^{\frac1m}\|_{L^{q}}}{(\prod_{j=1}^m\|g_j\|_{L^{2}})^{\frac1m}}<\infty,
\end{equation}
where $q=\frac{2nm}{\sum_{j=1}^m n_j}$.

We will be interested in the special case when $V_j$ are linear subspaces, $l_j=\pi_j$ are orthogonal projections and $n_j=2$.   For future use, we reformulate the theorem in this case.
\begin{theorem}
\label{BCCT1}
The quantity
$$\sup_{g_j\in L^{2}({V_j})}\frac{\|(\prod_{j=1}^{m}(g_j\circ \pi_j))^{\frac1{m}}\|_{L^n(\R^n)}}{(\prod_{j=1}^{m}\|g_j\|_{L^{2}(V_j)})^{\frac1{m}}}$$
is finite if and only if
\begin{equation}
\label{fek3}
\dim(V)\le \frac{n}{2m}\sum_{j=1}^{m}{\dim(\pi_j(V))}, \text{ for every linear subspace }V\subset \R^n.
\end{equation}
\end{theorem}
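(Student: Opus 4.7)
The plan is to derive Theorem \ref{BCCT1} as a direct specialization of Theorem \ref{BCCT} via its equivalent formulation (\ref{fek18}). I take $l_j = \pi_j$, $n_j = 2$ for every $j$, and all exponents $p_j$ equal to a common value $p$ to be determined by the scaling.

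First I will pin down $p$ from the scaling constraint (\ref{fek1}). With $n_j=2$ it reads $n = 2m/p$, which forces $p = 2m/n$. Plugging this into $q = 2nm/\sum_{j=1}^m n_j$ yields $q = 2nm/(2m) = n$, so the Lebesgue exponent appearing in (\ref{fek18}) is exactly the exponent $n$ featured in the statement of Theorem \ref{BCCT1}. In particular, condition (\ref{fek1}) holds automatically for this choice of $p$ and does not impose any further restriction.

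Next I will translate the transversality condition (\ref{fek2}) under these substitutions. With $l_j = \pi_j$ and $p_j = 2m/n$, it reads
$$\dim(V) \le \sum_{j=1}^m \frac{\dim(\pi_j(V))}{2m/n} = \frac{n}{2m}\sum_{j=1}^m \dim(\pi_j(V))$$
for every linear subspace $V \subset \R^n$, which is exactly (\ref{fek3}). Invoking the ``if and only if'' clause of Theorem \ref{BCCT}, in the form (\ref{fek18}), then gives the claimed equivalence.

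I expect no genuine obstacle here; the argument is essentially matching of indices. The one small point worth pinning down is the passage from (\ref{fek17}) to (\ref{fek18}), effected by the substitution $g_j = f_j^{p/2}$, which sends $L^p$ integrals of the $f_j$ to $L^2$ integrals of the $g_j$ and converts $\Lambda(f_1,\ldots,f_m)$ into $\int (\prod_{j=1}^m g_j\circ l_j)^{q/m}\,dx$. This is a routine change of variables, but it is worth verifying explicitly that the values $p = 2m/n$ and $q = n$ are mutually consistent with $qp = 2m$, as recorded above, so that the two supremum quantities are indeed powers of one another.
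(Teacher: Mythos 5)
Your proposal is correct and coincides with what the paper does: Theorem \ref{BCCT1} is recorded there precisely as the specialization of Theorem \ref{BCCT} (in the form \eqref{fek18}) to $l_j=\pi_j$, $n_j=2$ and the common exponent $p_j=2m/n$ forced by \eqref{fek1}, which turns \eqref{fek2} into \eqref{fek3} and gives $q=n$. Your bookkeeping ($qp=2m$, the substitution $g_j=f_j^{p/2}$) is exactly the routine verification the paper leaves implicit.
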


Remark \ref{rek456} will show the relevance of the space $L^n$ from Theorem \ref{BCCT1}.

\bigskip

\section{Transversality}
\label{Trans}
For $k\ge 2$ recall the definition \eqref{ctuv45t8vt9590v0rt98h]owpq,dirfx[z]qwpxe]} of the two dimensional manifold $\M_{2,k}$ in $\R^n=\R^{\frac{k(k+3)}2}$.
Denote by
$$n_1(t,s)=(1,0,\ldots)$$
$$n_2(t,s)=(0,1,\ldots)$$
the canonical tangent vectors to $\M_{2,k}$ at $(t,s,\Psi(t,s))$.

In this section we  introduce a quantitative form of transversality for $\M_{2,k}$ as well as Conjecture \ref{ckconjh}, which we prove for $k=2,3$. The key result in this section, that we prove conditional to Conjecture \ref{ckconjh} is  a multilinear Kakeya-type inequality. This will then lead to the proof of the multilinear restriction Theorem \ref{tfek4} in Section \ref{se:multi}.

Given two vectors $v_1, v_2$ in $\R^{\frac{k(k+3)}2}$, define the polynomial function on $\R^2$
$$Q_{v_1,v_2}(t,s)=\det \begin{bmatrix}n_1(t,s)\cdot v_1&n_1(t,s)\cdot v_2\\n_2(t,s)\cdot v_1&n_2(t,s)\cdot v_2\end{bmatrix}.$$Note that its degree is at most $2k-2$. The following lemma will be relevant for our discussion of the cases $k=2$ and $k=3$.
\medskip

\begin{lemma}\label{lBnew}
(a) $(k=2)$ There does not exist a three dimensional space $V$ in $\R^5$ so that $Q_{v,w}\equiv 0$ for each $v,w\in V$.
\medskip

(b) $(k=3)$ There does not exist a five dimensional space $V$ in $\R^9$ so that $Q_{v,w}\equiv 0$ for each $v,w\in V$.
\end{lemma}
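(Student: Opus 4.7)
The plan is to turn the geometric condition into a degree count via a polynomial-valued linear map. Define $\phi:\R^n\to\R[t,s]_{\le k-1}^2$ by $\phi(v)=(n_1(t,s)\cdot v,\,n_2(t,s)\cdot v)$; reading off the coefficients of the two polynomials shows $\phi$ is injective. The hypothesis $Q_{v,w}\equiv 0$ for all $v,w\in V$ is exactly the statement that $\phi(v)$ and $\phi(w)$ are linearly dependent over the rational function field $\R(t,s)$. Fixing any nonzero $v_0\in V$, this forces $\phi(V)$ to lie on a single $\R(t,s)$-line through $\phi(v_0)$, which we can write as $\R(t,s)\cdot g$ with $g=(g_1,g_2)\in\R[t,s]^2$ primitive (i.e.\ $\gcd(g_1,g_2)=1$). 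A standard Gauss-lemma argument then shows that every element of $\phi(V)$ has the form $r\cdot g$ for some $r\in\R[t,s]$ with $\deg r\le k-1-\max(\deg g_1,\deg g_2)$.

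I would then split by the degree of $g$. If $g$ is non-constant, then $\deg r\le k-2$, so the space of admissible $r$'s has dimension at most $\binom{k}{2}$; by injectivity of $\phi$ this yields $\dim V\le 1$ for $k=2$ and $\dim V\le 3$ for $k=3$, both already strictly below the thresholds $3$ and $5$. If instead $g=(a,b)$ is a nonzero constant vector, then $r$ ranges over all polynomials of degree $\le k-1$, providing $\binom{k+1}{2}$ free coefficients. However, demanding that $(ar,br)=(n_1\cdot v,n_2\cdot v)$ for some $v\in\R^n$ imposes linear consistency relations: every coordinate of $v$ indexed by a monomial $t^is^j$ with $i,j\ge 1$ contributes to both components of $\phi(v)$ and therefore forces one relation on the coefficients of $r$. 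A direct count gives a single relation for $k=2$ (coming from $v_5$), dropping the admissible dimension from $3$ to $2<3$, and three relations for $k=3$ (from $v_5,v_8,v_9$) which a short rank computation shows are linearly independent, dropping the admissible dimension from $6$ to $3<5$. The degenerate subcases $g=(1,0)$ and $g=(0,1)$ force $n_2\cdot v$ or $n_1\cdot v$ to vanish identically, which immediately kills enough coordinates of $v$ to keep us within the same bounds.

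The main obstacle is the bookkeeping in the constant-$g$ case: verifying that the coupling relations produced by the shared coordinates of $v$ are genuinely linearly independent, so that the dimension drop is exactly as claimed. Once that short rank computation is in hand, the lemma follows in both parts.
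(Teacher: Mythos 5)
Your argument is correct, and it takes a genuinely different route from the paper. I checked the one step you deferred: in the constant case $g=(a,b)$ with $ab\neq 0$, the coupling relations for $k=3$ are $a\,r_{01}=b\,r_{10}$, $a\,r_{11}=2b\,r_{20}$, $2a\,r_{02}=b\,r_{11}$ (writing $r=\sum r_{ij}t^is^j$), and these are indeed linearly independent, so the admissible space of multipliers has dimension $3<5$; the $k=2$ count and the degenerate cases $g=(1,0),(0,1)$ work out exactly as you say. The paper argues quite differently: it uses only a handful of coefficients of $Q_{v,w}$ (the terms of order $\le 1$, plus $t^4,s^4$ when $k=3$), sums them to produce a single bilinear identity, reads this as $TV\subset V^{\perp}$ for an explicit linear map $T$, and then uses rank--nullity together with the fact that $\ker T$ is spanned by $(0,0,1,1,-2)$ (resp.\ its $k=3$ analogue) to force this specific vector into $V$, after which substituting it back into $Q_{v,w}\equiv 0$ gives three linear conditions on $V$ and the contradiction $\dim V\le 2$. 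Your approach exploits the full content of $Q_{v,w}\equiv 0$ through proportionality over $\R(t,s)$, is more structural (it classifies $\phi(V)$ as polynomial multiples of a primitive vector $g$ and gives the explicit bounds $\dim V\le 2$ for $k=2$ and $\dim V\le 3$ for $k=3$), and avoids the somewhat ad hoc choice of which coefficients to combine; the paper's argument is shorter and tailored to $k=2,3$. One caveat if you hope to push your method toward Conjecture \ref{ckconjh}: the coarse bound $\binom{k}{2}$ in the non-constant-$g$ case exceeds $\left[\frac{n-1}{2}\right]$ once $k\ge 6$, so for large $k$ that case would need a finer count using the intersection with the image of $\phi$, not just the degree restriction on $r$.
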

\begin{proof}
We first prove (a). Assume for contradiction such a $V$ exists. The requirement $Q_{v,w}\equiv 0$ means
\begin{equation}
\label{yyyyyyyyyy7}
\det \begin{bmatrix}v_1+2v_3t+v_5s&w_1+2w_3t+w_5s\\v_2+2v_4s+v_5t&w_2+2w_4s+w_5t\end{bmatrix}=0
\end{equation}
for each $t,s$. In particular, by taking into account only the coefficients of the terms of order $\le 1$ we get
\begin{equation}
\label{yyyyyyyyyy4}
v_1w_2-v_2w_1=0,
\end{equation}
\begin{equation}
\label{yyyyyyyyyy5}
v_1w_5-v_5w_1+2v_3w_2-2v_2w_3=0,
\end{equation}
\begin{equation}
\label{yyyyyyyyyy6}
v_5w_2-v_2w_5+2v_1w_4-2v_4w_1=0.
\end{equation}
Adding these up we get
$$v_1(w_2+w_5+2w_4)+v_2(-w_1-2w_3-w_5)+2v_3w_2-2v_4w_1+v_5(-w_1+w_2)=0.$$
Since this holds for all $v,w\in V$, this leads to the inclusion
$$T_1V\subset V^{\perp},$$
where $$T_1(w)=(w_2+w_5+2w_4,-w_1-2w_3-w_5,2w_2,-2w_1,-w_1+w_2).$$
Since $3=\dim(V)>\dim(V^\perp)=2$, by the Rank-Nullity Theorem, it follows that the kernel of $T_1$ restricted to $V$ must be nontrivial. But the kernel of $T_1$ on $\R^5$ is the one dimensional space spanned by $(0,0,1,1,-2)$, which forces $(0,0,1,1,-2)\in V$. Using this in \eqref{yyyyyyyyyy7} we find that for each $w\in V$
$$(t-s)(w_1+w_2+t(2w_3+w_5)+s(2w_4+w_5))\equiv 0,$$
or
$$w_1+w_2=2w_3+w_5=2w_4+w_5=0.$$
This shows that $\dim(V)\le 2$, leading to a contradiction.
\bigskip

The proof of (b) is very similar. Assume for contradiction that such a $V$ exists.  The requirement $Q_{v,w}\equiv 0$ means
\begin{equation}
\label{yyyyyyyyyy8}
\det \begin{bmatrix}v_1+2v_3t+v_5s+3v_6t^2+2v_8st+v_9s^2&w_1+2w_3t+w_5s+3w_6t^2+2w_8st+w_9s^2\\v_2+2v_4s+v_5t+3v_7s^2+v_8t^2+2v_9st&w_2+2w_4s+w_5t+3w_7s^2+w_8t^2+2w_9st\end{bmatrix}=0
\end{equation}
for each $t,s$. By taking into account  the coefficients of the terms of order $\le 1$ we get that \eqref{yyyyyyyyyy4}, \eqref{yyyyyyyyyy5} and \eqref{yyyyyyyyyy6} continue to hold in this case, too. Moreover, by considering the coefficients of $t^4$ and $s^4$ we also get
\begin{equation}
\label{yyyyyyyyyy9}
v_6w_8-v_8w_6=0,
\end{equation}
\begin{equation}
\label{yyyyyyyyyy10}
v_7w_9-v_9w_7=0.
\end{equation}
Adding up \eqref{yyyyyyyyyy4}, \eqref{yyyyyyyyyy5}, \eqref{yyyyyyyyyy6}, \eqref{yyyyyyyyyy9} and \eqref{yyyyyyyyyy10} we get
$$T_2V\subset V^{\perp},$$
where
$$T_2(w)=(w_2+w_5+2w_4,-w_1-2w_3-w_5,2w_2,-2w_1,-w_1+w_2,w_8,w_9,-w_6,-w_7).$$
A similar argument as before finishes the proof, once we notice that the kernel of $T_2$ on $\R^9$ is the one dimensional space spanned by $(0,0,1,1,-2,0,0,0,0)$.
\end{proof}

We will denote by $[x]$ the integer part of $x$. It seems plausible to conjecture the following extension to higher dimensions.
\begin{conjecture}
\label{ckconjh}
Let $n=\frac{k(k+3)}{2}$, $k\ge 2$. Then there does not exist a $[\frac{n-1}2]+1-$dimensional space $V$ in $\R^n$ such that $Q_{v,w}\equiv 0$ for each $v,w\in V$.
\end{conjecture}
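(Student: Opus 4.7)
The plan is to recast $Q_{v,w}\equiv 0$ as the vanishing of the Jacobian bracket on associated potential polynomials, and then to bound the dimension of bracket-abelian subspaces using the classical fact that the centralizer of any nonconstant polynomial in $\C[t,s]$ under this bracket is a polynomial subring $\C[h_0]$.

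\medskip

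To each $v=(v_1,\ldots,v_n)\in\R^n$ associate the potential
$$F_v(t,s):=v\cdot(t,s,\Psi(t,s))\in\R[t,s]\subset\C[t,s],$$
which has degree $\le k$, vanishes at $(0,0)$, and satisfies $\partial_tF_v=n_1(t,s)\cdot v$ and $\partial_sF_v=n_2(t,s)\cdot v$. The map $v\mapsto F_v$ is a linear isomorphism from $\R^n$ onto the $n$-dimensional space of polynomials of degree $\le k$ vanishing at the origin. Under this identification,
$$Q_{v,w}(t,s)=(\partial_tF_v)(\partial_sF_w)-(\partial_sF_v)(\partial_tF_w)=:\{F_v,F_w\},$$
the Jacobian bracket on $\C[t,s]$, so the hypothesis $Q_{v,w}\equiv 0$ on $V\times V$ says precisely that $\widetilde V:=\{F_v:v\in V\}$ is abelian under $\{\cdot,\cdot\}$.

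\medskip

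Assuming $\widetilde V\ne\{0\}$, pick a nonzero $F_0\in\widetilde V$; it is nonconstant because $F_0(0,0)=0$. The centralizer $C(F_0):=\{G\in\C[t,s]:\{F_0,G\}=0\}$ is the kernel of the nonzero polynomial derivation $D_{F_0}:=(\partial_tF_0)\partial_s-(\partial_sF_0)\partial_t$, hence is a proper, factorially closed $\C$-subalgebra of $\C[t,s]$; moreover every element of $C(F_0)$ is algebraically dependent on $F_0$, since $\{F_0,G\}=0$ is equivalent to $dF_0\wedge dG=0$. By the classical characterization of such subalgebras (a L\"uroth-type theorem for the polynomial ring $\C[t,s]$; see e.g.\ Rentschler or Nowicki), $C(F_0)=\C[h_0]$ for some nonconstant $h_0\in\C[t,s]$. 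Since $\widetilde V\subseteq C(F_0)$, every $G\in\widetilde V$ has the form $G=p_G(h_0)$ for some univariate polynomial $p_G$. Setting $e=\deg h_0\ge 1$, the requirement $\deg G\le k$ forces $\deg p_G\le[k/e]$ and the requirement $G(0,0)=p_G(h_0(0,0))=0$ is one additional linear condition, yielding
$$\dim V=\dim\widetilde V\le[k/e]\le k.$$
Since $n=k(k+3)/2\ge 2k+1$ for $k\ge 2$ (equivalent to $(k-2)(k+1)\ge 0$), we have $k\le[(n-1)/2]$; hence no $V$ of dimension $[(n-1)/2]+1$ can satisfy $Q_{v,w}\equiv 0$ identically on $V\times V$, which is the conjecture.

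\medskip

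The main obstacle is the invocation of the classical structure theorem $C(F_0)=\C[h_0]$. A self-contained substitute runs as follows: since the polynomial vectors $\nabla F$ for $F\in\widetilde V$ are pairwise proportional in $\C(t,s)^2$, there is a single coprime pair $(A,B)\in\C[t,s]^2$ and scalars $\lambda_F\in\C[t,s]$ with $\nabla F=\lambda_F(A,B)$; the closedness identity then reads $A(\lambda_F)_s-B(\lambda_F)_t=(B_t-A_s)\lambda_F$, a first-order linear PDE whose polynomial solutions of degree $\le k-1-\max(\deg A,\deg B)$ form a space of dimension at most $k$, with the bound attained only when $(A,B)$ is constant (in which case $\widetilde V$ is essentially $\mathrm{span}\{h_0,h_0^2,\ldots,h_0^k\}$ for a linear $h_0$). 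Either route yields the slightly stronger bound $\dim V\le k$, from which the conjecture follows.
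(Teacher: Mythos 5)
You should note at the outset that the paper does not prove this statement in general: it is stated as a conjecture, and only the cases $k=2,3$ are established (Lemma \ref{lBnew}) by extracting a handful of coefficient identities from $Q_{v,w}\equiv 0$, building a linear map $T$ with $TV\subset V^{\perp}$, and running a rank--nullity argument. Your route is genuinely different and, in its main branch, correct -- and it gives more: the identification $Q_{v,w}=\{F_v,F_w\}$ with $F_v=v\cdot(t,s,\Psi(t,s))$ is exact, and once a nonzero $F_0\in\widetilde V$ is fixed, the structure theorem for Jacobian centralizers (equivalently, the Nagata--Nowicki theorem that the ring of constants of a derivation of a two-variable polynomial ring in characteristic zero is generated by one polynomial, or the theory of closed polynomials) yields $\widetilde V\subset \C[h_0]$, hence $\dim V\le [k/\deg h_0]\le k\le[\frac{n-1}2]$, a bound stronger than the conjectured nonexistence and one that would make Theorem \ref{yjoui0tyiu90ty8y0-r5iy5690it} unconditional for every $k$. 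Three repairs are needed. First, the citation: Rentschler's theorem is about locally nilpotent derivations and is not the relevant statement, and your aside that $C(F_0)$ is factorially closed is false for general derivation kernels (e.g. $\ker(t\partial_t-s\partial_s)=\C[ts]$ contains $ts$ but neither factor); fortunately neither remark is used, and Nagata--Nowicki (or closed polynomials) suffices. Second, $\widetilde V$ is a real subspace while $h_0$ and the $p_G$ may be complex; add the one-line observation that real polynomials independent over $\R$ stay independent over $\C$, so the complex bound $[k/e]$ does control $\dim_\R V$. Third, the closing ``self-contained substitute'' is not a proof: nonzero polynomial solutions $\lambda$ of $A\lambda_s-B\lambda_t=(B_t-A_s)\lambda$ of the stated degree correspond, up to additive constants, exactly to potentials $F$ with $\nabla F=\lambda(A,B)$, and any two such $F$ automatically bracket-commute, so the asserted dimension bound for that PDE is precisely the statement being proved; as written that paragraph is circular and should be deleted or replaced, leaving the classical theorem as the backbone.
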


\bigskip

Given a polynomial function $Q(t,s)$ of any degree $\deg(Q)$, denote by $\|Q\|$ the $l^2$ norm of its coefficients.
\begin{definition}
\label{dfek1}
Let $n=\frac{k(k+3)}{2}$.
A collection consisting of $m\ge n$ sets $S_1,\ldots,S_{m}\subset [0,1]^2$ is said to be $\nu-$transverse for  $\M_{2,k}$ if the following  requirement is satisfied:
\medskip

For each $1\le i_1\not= i_2\ldots\not= i_{\left[\frac{m}{n}\right]+1}\le m$  we have
\begin{equation}
\label{fek4}
\inf_{\deg(Q)\le 2k-2,\atop{\|Q\|=1}}\max_{1\le j\le \left[\frac{m}{n}\right]+1}\inf_{(t,s)\in S_{i_j}}|Q(t,s)|\ge \nu.
\end{equation}

\end{definition}
 Note that transverse sets are not necessarily pairwise disjoint. Requirement \eqref{fek4} says that $\left[\frac{m}{n}\right]+1$ points in  different sets $S_i$ do not come "close`` to belonging to the zero set of a polynomial function $Q$ of degree $\le 2k-2$. This is a rather weak form of transversality, but it is easily seen to have the two attributes that we need. First, large enough collections of squares will contain a transverse subcollection, as shown in Theorem \ref{tfek3}. Second, transverse squares will satisfy the requirement needed for the application of the Brascamp--Lieb inequality, as shown in the following result.

\begin{proposition}
\label{pfek1}

Assume Conjecture \ref{ckconjh} holds for some $k\ge 2$. Consider $m\ge n$ points $(t_j,s_j)\in [0,1]^2$ such that the sets $S_j=\{(t_j,s_j)\}$ are $\nu-$transverse for $\M_{2,k}$, for some $\nu>0$. Then the $m$ planes $V_j,\;1\le j\le m$  spanned by the vectors $n_1(t_j,s_j)$ and $n_2(t_j,s_j)$ in $\R^n$ satisfy requirement \eqref{fek3}.

\end{proposition}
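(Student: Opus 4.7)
The plan is to rewrite condition \eqref{fek3} as
$$\sum_{j=1}^m a_j \ge \frac{2md}{n},$$
where $d := \dim V$ and $a_j := \dim \pi_j(V) \in \{0,1,2\}$, and to verify this inequality by splitting on the value of $d$ and exploiting polynomial vanishing at two different degrees.

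The starting observation is that $a_j \le 1$ precisely when $Q_{v,w}(t_j,s_j)=0$ for all $v,w \in V$, and $a_j = 0$ precisely when $v\cdot n_1(t_j,s_j) = v\cdot n_2(t_j,s_j) = 0$ for every $v \in V$. For any fixed $v \in V\setminus\{0\}$, each of $v\cdot n_1(t,s)$ and $v\cdot n_2(t,s)$ is a polynomial in $(t,s)$ of degree at most $k-1 \le 2k-2$, and since $\M_{2,k}$ does not lie in any proper affine hyperplane of $\R^n$, at least one of them is not identically zero. The $\nu$-transversality then forces this nonzero polynomial to vanish at at most $[m/n]$ of the points $(t_j,s_j)$, so the number $m_0$ of indices with $a_j = 0$ satisfies $m_0 \le [m/n]$.

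For $d \le [\frac{n-1}{2}]$, combining $\sum_j a_j \ge m - m_0 \ge m - [m/n] \ge \frac{m(n-1)}{n}$ with $2d \le n-1$ already yields $\sum_j a_j \ge \frac{2md}{n}$. For $[\frac{n-1}{2}] < d \le n-1$, I would argue instead as follows: if more than $[m/n]$ indices satisfied $a_j \le 1$, then every $Q_{v,w}$ (a polynomial of degree at most $2k-2$) would vanish at all of them, and $\nu$-transversality would force $Q_{v,w}\equiv 0$ for all $v,w \in V$; Conjecture \ref{ckconjh} would then give $d \le [\frac{n-1}{2}]$, a contradiction. Hence at least $m - [m/n]$ of the $a_j$ equal $2$, so $\sum_j a_j \ge 2(m-[m/n]) \ge \frac{2m(n-1)}{n} \ge \frac{2md}{n}$. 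The remaining case $d = n$ is trivial, since $\pi_j(V)=V_j$ gives $\sum_j a_j = 2m$.

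The main care is in applying $\nu$-transversality correctly: it gives pointwise control on each individual polynomial of degree at most $2k-2$ (after normalization), so the passage from ``the whole family $\{Q_{v,w}\}_{v,w\in V}$ vanishes at more than $[m/n]$ points'' to ``each $Q_{v,w}\equiv 0$'' must be made one polynomial at a time, which is exactly what is needed to feed the hypothesis into Conjecture \ref{ckconjh}. Once this is organized, the $m_0$-bound in the low-$d$ regime and the $|\{j:a_j\le 1\}|$-bound in the high-$d$ regime dovetail to cover every $d \in \{0,1,\ldots,n\}$.
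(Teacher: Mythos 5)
Your proof is correct and follows essentially the same route as the paper: the same dichotomy on $\dim(V)$ at $\left[\frac{n-1}{2}\right]$, the same use of $\nu$-transversality to bound by $\left[\frac{m}{n}\right]$ the number of indices at which a nontrivial polynomial of degree $\le 2k-2$ (either $v\cdot n_i(t,s)$ or $Q_{v,w}$) can vanish, and the same appeal to Conjecture \ref{ckconjh} in the high-dimensional case (where, to match the conjecture's statement, one passes to a subspace of $V$ of dimension exactly $\left[\frac{n-1}{2}\right]+1$, as the paper does with $V'$). The only difference is cosmetic: you derive the contradiction by first forcing $Q_{v,w}\equiv 0$ and then invoking the conjecture, while the paper first extracts a nontrivial $Q_{u_1,u_2}$ from the conjecture and then contradicts transversality.
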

\begin{proof}
It suffices to check \eqref{fek3} for linear subspaces $V$ with dimension between one and $n-1$, as the case of dimension zero or $n$ is trivial.
\medskip

Note that given any nonzero vector $v$,  at least one of  $v\cdot n_1(t,s)=0$ and $v\cdot n_2(t,s)=0$ represents a nontrivial polynomial function $Q$ with degree $\le 2k-2$. The first observation is that a one dimensional subspace can not be orthogonal to $\left[\frac{m}{n}\right]+1$ distinct $V_j$. If this were to be the case, the $\left[\frac{m}{n}\right]+1$ planes $V_j$ would be forced to belong to a hyperplane in $\R^n$, with normal vector $v$. But then the $\left[\frac{m}{n}\right]+1$ corresponding points $(t_j,s_j)$ would belong to both  $v\cdot n_1(t,s)=0$ and  $v\cdot n_2(t,s)=0$, contradicting \eqref{fek4}. This observation shows that \eqref{fek3} is satisfied if $\dim(V)\le \left[\frac{n-1}{2}\right]$, as $\dim(\pi_j(V))\ge 1$ for at least $m-\left[\frac{m}{n}\right]\ge \frac{m(n-1)}{n}$ values of $j$.
\medskip

Consider now the case of $V$ with $\left[\frac{n-1}{2}\right]+1\le \dim(V)\le n-1$. Let $V'$ be an arbitrary subspace of $V$ with $\dim(V')=\left[\frac{n-1}{2}\right]+1$ and basis $v_1,\dots,v_{\left[\frac{n-1}{2}\right]+1}$. We will argue that there can be at most  $\left[\frac{m}{n}\right]$ planes $V_j$ with $\dim(\pi_j(V))\le 1$.  This immediately implies \eqref{fek3}, as
$$\frac{n}{2m}\sum_{j=1}^m\dim(\pi_j(V))\ge 2\frac{n}{2m}(m-\left[\frac{m}{n}\right])\ge n-1\ge \dim(V).$$
Assume now for contradiction that $\dim(\pi_j(V))\le 1$ for $\left[\frac{m}{n}\right]+1$ values of $j$, that is to say $1\le j\le \left[\frac{m}{n}\right]+1$. Obviously $\dim(\pi_j(V'))\le 1$, too. By the Rank-Nullity Theorem, the rank of the matrix
$$\begin{bmatrix}n_1(t_j,s_j)\cdot v_1&\dots& n_1(t_j,s_j)\cdot v_{\left[\frac{n-1}{2}\right]+1}\\ n_2(t_j,s_j)\cdot v_{1}&\ldots& n_2(t_j,s_j)\cdot v_{\left[\frac{n-1}{2}\right]+1}\end{bmatrix}$$
is at most one. In particular
$$Q_{u_1,u_2}(t_j,s_j)=0,\;1\le j\le \left[\frac{m}{n}\right]+1$$
for each $u_1,u_2\in V'$.
Using Conjecture \ref{ckconjh}, we can pick $u_1,u_2\in V'$ so that $Q_{u_1,u_2}$ is nontrivial, and this contradicts \eqref{fek4}, as $\deg(Q_{u_1,u_2})\le 2k-2$.

\end{proof}

A $K-$square will be a  closed square in $[0,1]^2$ with side length $\frac1K$. When $K=2^l$ for $l\in\N$, the collection of all dyadic $K-$squares will be denoted by $\Col_K$. Since $\Col_K$ is finite, the various constants throughout the rest of the argument can be made uniform over the choice of squares.

The relevance of the following simple result will be clear in the proof of Proposition \ref{fp2}.
\begin{theorem}
\label{tfek3}
There exists $\Lambda=\Lambda_k>0$ such that for each $K\ge 1$ there exists $\nu_{K}=\nu_{K,k}>0$   so that any $\Lambda K$ or more squares in $\Col_K$  are $\nu_{K}-$transverse for $\M_{2,k}$.
\end{theorem}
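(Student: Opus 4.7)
The plan is to recast the transversality condition of Definition \ref{dfek1} as a uniform bound on the sublevel sets of unit polynomials of degree at most $d=2k-2$, and then to deduce this bound by combining a classical B\'ezout/Crofton estimate for algebraic curves with a finite-dimensional compactness argument.

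First I would unpack the definition: a collection $S_1,\dots,S_m$ of squares is $\nu$-transverse precisely when, for every unit polynomial $Q$ of degree at most $d$, the number of indices $i$ with $\inf_{S_i}|Q|<\nu$ is at most $[m/n]$. Since our squares are chosen from the finite family $\Col_K$, it suffices to prove the stronger $Q$-uniform statement: there exist $\nu_K>0$ and $C_k>0$, depending only on $k$, such that
\begin{equation}
\label{pr1}
\#\{\Delta\in\Col_K:\inf_{\Delta}|Q|<\nu_K\}\le C_k K
\end{equation}
for every unit polynomial $Q$ of degree at most $d$. Granted \eqref{pr1}, the theorem follows by choosing $\Lambda_k$ sufficiently large in terms of $C_k$ and $n$, since then $[m/n]\ge C_k K$ whenever $m\ge\Lambda_k K$.

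Next I would supply the purely algebro-geometric input: for every nonzero polynomial $Q$ of degree at most $d$, the real zero set $Z(Q)$ meets at most $O_k(K)$ squares of $\Col_K$. This can be established either via Cauchy--Crofton, since $Z(Q)\cap[0,1]^2$ has length $O_d(1)$ because a generic line meets $Z(Q)$ in at most $d$ points, or via a direct B\'ezout count bounding the number of crossings of $Z(Q)$ with the $2(K+1)$ axis-aligned grid lines, together with the $O_d(1)$ compact connected components of $Z(Q)$. Either way one arrives at the desired $O_k(K)$ bound.

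The most delicate step, which I expect to be the main obstacle, is passing from the zero set to a uniformly positive sublevel. Here I would use the compactness of the unit sphere $S$ of polynomials of degree at most $d$ in two variables, together with the finiteness of $\Col_K$. For each fixed $Q_0\in S$ and each $\Delta\in\Col_K$ disjoint from $Z(Q_0)$, continuity of $(Q,x)\mapsto|Q(x)|$ on $S\times[0,1]^2$ gives a neighborhood of $Q_0$ on which $\inf_\Delta|Q|$ is bounded below by a positive constant. Intersecting over the finitely many such $\Delta$ produces a single neighborhood $U_{Q_0}$ of $Q_0$ and a $\nu(Q_0)>0$ such that for $Q\in U_{Q_0}$ the sublevel $\{\Delta:\inf_{\Delta}|Q|<\nu(Q_0)\}$ is contained in the set of squares meeting $Z(Q_0)$, hence has cardinality at most $C_k K$ by the previous step. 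Covering the compact set $S$ by finitely many such $U_{Q_0}$ and taking $\nu_K$ to be the minimum of the corresponding $\nu(Q_0)$ yields \eqref{pr1}, and hence the theorem.
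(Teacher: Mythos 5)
Your proposal is correct, and it follows the same skeleton as the paper's proof --- an $O_k(K)$ bound on the number of squares of $\Col_K$ that can lie close to the zero set of a unit polynomial of degree at most $2k-2$, followed by a compactness argument in the coefficient space to extract a uniform $\nu_K$ --- but you implement the counting input differently. The paper simply quotes Wongkew's tubular-neighbourhood volume theorem \cite{Wo} to get that the $\frac{10}{K}$-neighbourhood of $Z(Q)$ meets at most $C_{2k-2}K$ squares of $\Col_K$, then defines $\nu_K$ directly as a minimum over all subcollections of at least $(C_{2k-2}+1)K$ squares of the quantity $\inf_{Q}\max_{R}\inf_{R}|Q|$ and asserts its positivity by compactness, taking $\Lambda_k=C_{2k-2}+1$. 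You instead count only the squares meeting the zero set itself, prove the $O_k(K)$ bound by elementary means (grid-line crossings via B\'ezout together with a Harnack/Milnor--Thom bound on the number of connected components), and let the open-cover compactness argument on the unit sphere of coefficients do the thickening for you: for $Q$ near $Q_0$, any square on which $|Q|$ drops below the threshold must already meet $Z(Q_0)$. This buys a self-contained argument with no external citation, at the cost of a somewhat longer write-up; you are also more careful than the paper about the choice of $\Lambda_k$, which should indeed be large compared with $nC_{2k-2}$ because the subcollections in Definition \ref{dfek1} have size $\left[\frac{m}{n}\right]+1$. One caveat: the Cauchy--Crofton variant as you state it is not by itself sufficient, since a length bound does not control the number of squares met (isolated real points, or components confined to a single square, carry no length); this is precisely what the component count repairs, so rely on your B\'ezout-plus-components formulation, or supplement Crofton with the same bound on the number of components.
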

\begin{proof}
Let $d\ge 1$. By the main theorem in \cite{Wo} it follows that the $\frac{10}{K}-$neighborhood in $[0,1]^2$ of the zero set of any polynomial of degree $\le d$ in two variables will intersect at most $C_dK$ squares in $\Col_K$. The quantity
$$\nu_K:=\min_{\Col\subset\Col_K\;\;\atop{|\Col|\ge (C_{2k-2}+1)K}}\inf_{\deg(Q)\le {2k-2},\atop{\|Q\|=1}}\max_{R\in \Col}\inf_{(t,s)\in R}|Q(t,s)|$$
is easily seen to be positive, via a compactness argument. We can take $\Lambda_k=(C_{2k-2}+1)$
\end{proof}
\bigskip

For $k\ge 2$ let $n=\frac{k(k+3)}{2}$ and $\Lambda=\Lambda_k$. Let\; ${\SC}_{K}={\SC}_{K,k}$ denote the collection of all  $\Lambda K-$tuples $(V_1,\ldots,V_{\Lambda K})$ of  planes spanned by the vectors $n_1(t_j,s_j)$, $n_2(t_j,s_j)$ in $\R^{n}$ with $(t_j,s_j)$ arbitrary points belonging to distinct squares\footnote{A point can of course belong to as many as four squares. We only ask for the existence of a choice of distinct squares to which the points belong} $R_j\in\Col_K$.

Given $R\in\Col_K$, the collection $\Q_R(\delta)$ consists of the $\delta$ neighborhoods $T
$ of planes parallel to the plane spanned by $n_1(t,s)$, $n_2(t,s)$, for some arbitrary $(t,s)\in R$.

We can now prove the following multilinear Kakeya-type inequality.

\begin{theorem}
\label{tfek2}
Assume Conjecture \ref{ckconjh} holds for some $k\ge 2$.
Then  there exists a constant $\Theta_K<\infty$ depending  on $K$ so that for each $0<\delta<1$, for each  pairwise distinct $R_j\in \Col_K$ and for each finite subsets $\Q_{R_j}'(\delta)\subset \Q_{R_j}(\delta)$ we have
$$\|(\prod_{j=1}^{\Lambda K}(\sum_{T\in \Q_{R_j}'(\delta)}1_T))^{1/\Lambda K}\|_{L^{n/2}([-1,1]^n)}\lesssim_\epsilon \Theta_K \delta^{2-\epsilon}(\prod_{j=1}^{\Lambda K}|\Q_{R_j}'(\delta)|)^{1/\Lambda K}.$$
\end{theorem}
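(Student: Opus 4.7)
The plan is to deduce Theorem \ref{tfek2} from the perturbed multilinear Kakeya inequality of Bennett--Bez--Flock--Lee \cite{BBFL}, after first verifying the Brascamp--Lieb transversality condition on the $\Lambda K$ tangent planes arising from the squares $R_j$.

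First I would use that the $R_j$ are pairwise distinct and exactly $\Lambda K = \Lambda_k K$ in number, so by Theorem \ref{tfek3} the tuple $(R_1,\ldots,R_{\Lambda K})$ is $\nu_K$-transverse for $\M_{2,k}$. For any choice of representatives $(t_j,s_j)\in R_j$, Proposition \ref{pfek1} (which is where Conjecture \ref{ckconjh} enters) then says that the $\Lambda K$-tuple of tangent $2$-planes $V_j=\mathrm{span}(n_1(t_j,s_j),n_2(t_j,s_j))$ satisfies the Brascamp--Lieb condition \eqref{fek3}, and Theorem \ref{BCCT1} provides a finite Brascamp--Lieb constant for each such tuple. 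Since the parameter space $\SC_K$ consists of finitely many choices of squares with representatives ranging over a compact set in each, a routine compactness argument yields a uniform upper bound $\Theta_K<\infty$ for the Brascamp--Lieb constant over all admissible tuples, depending only on $K$.

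With this uniform bound I would invoke the main result of \cite{BBFL}, which converts a uniform Brascamp--Lieb bound on a family of $2$-planes into a multilinear Kakeya estimate of the shape
$$\|\bigl(\prod_{j=1}^{\Lambda K}\sum_{T\in \mathbb{T}_j}1_T\bigr)^{1/\Lambda K}\|_{L^{n/2}([-1,1]^n)}\lesssim_\epsilon \Theta_K\,\delta^{2-\epsilon}\prod_{j=1}^{\Lambda K}|\mathbb{T}_j|^{1/\Lambda K},$$
valid for any families $\mathbb{T}_j$ of $\delta$-neighborhoods of $2$-planes whose directions lie in a small neighborhood of the corresponding $V_j$. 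The $\delta^{2-\epsilon}$ exponent is consistent with the heuristic that, for a single $\Lambda K$-tuple of transverse $2$-plane tubes, the intersection has volume at most $\delta^n$, which becomes $\delta^2$ after taking the $(2/n)$-th power in the $L^{n/2}$ norm; the $\epsilon$-loss is the familiar endpoint loss in multilinear Kakeya. The tubes in $\Q_{R_j}(\delta)$ all have direction in the set $\{\mathrm{span}(n_1(t,s),n_2(t,s)):(t,s)\in R_j\}$, whose diameter is $O(1/K)$, so the angular variation is controlled by $K$ and stays safely within the perturbation window of \cite{BBFL}. Applying the estimate to the specific families $\mathbb{T}_j=\Q'_{R_j}(\delta)$ then yields the theorem.

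The main obstacle is precisely that the tangent direction is not constant within a given $R_j$: a naive application of Theorem \ref{BCCT1} only provides a bound at a single fixed tuple of planes, whereas the claim requires a Kakeya-type inequality that holds uniformly for tubes whose axes range over $R_j$. The perturbation stability proved in \cite{BBFL} is exactly what absorbs this variation into the constant $\Theta_K$; without that input, one would have to hand-partition each $R_j$ into subsquares matching the angular scale of $\delta$, then glue the estimates back together, paying further $\delta^{-\epsilon}$ or logarithmic losses that one would need to control by induction on scales.
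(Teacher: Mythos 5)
Your overall route is the same as the paper's: transversality of $\Lambda K$ distinct squares from Theorem \ref{tfek3}, the Brascamp--Lieb condition from Proposition \ref{pfek1} (where Conjecture \ref{ckconjh} enters), finiteness of the Brascamp--Lieb constant from Theorem \ref{BCCT1}, then the perturbed multilinear Kakeya theorem of \cite{BBFL} plus a compactness argument over ${\SC}_K$. However, the step that actually delivers uniformity is not justified as you state it. Theorem 1.2 of \cite{BBFL} is a \emph{local} statement: for a fixed tuple $(V_1,\ldots,V_{\Lambda K})$ with finite Brascamp--Lieb constant it produces a constant $\Theta_{(V_1,\ldots,V_{\Lambda K})}$ and a radius $\nu_{(V_1,\ldots,V_{\Lambda K})}$, and the Kakeya inequality is guaranteed only for tubes whose directions lie within that radius of the fixed tuple. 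There is no lower bound on this radius; it may be far smaller than the $O(1/K)$ angular spread of the tangent planes over a square $R_j\in\Col_K$, so your claim that the variation ``stays safely within the perturbation window'' is precisely the point that needs proof, and it can fail as written.

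Relatedly, your ``routine compactness argument'' for a uniform bound on the Brascamp--Lieb constant over ${\SC}_K$ is not routine: compactness of ${\SC}_K$ gives nothing unless one knows the Brascamp--Lieb constant is locally bounded near each tuple, and that local boundedness is itself the stability content of \cite{BBFL}, not of Theorem \ref{BCCT1}. The paper instead applies compactness at the level of the Kakeya statement: each tuple in ${\SC}_K$ has finite Brascamp--Lieb constant, hence by \cite{BBFL} an open neighborhood in $\G(2,\R^n)^{\Lambda K}$ on which the Kakeya inequality holds with some constant; since ${\SC}_K$ is closed, hence compact, finitely many such neighborhoods cover it, and after splitting each family $\Q_{R_j}'(\delta)$ into boundedly many subfamilies (the number depending only on $K$) whose direction tuples land in a single neighborhood, one takes the largest of the finitely many constants as $\Theta_K$. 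If you replace your window-size claim with this finite-subcover step, your argument becomes the paper's proof.
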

\begin{proof}
The proof will rely on a few well-known  observations, as well as on a recent result from \cite{BBFL}.
The Grassmannian $\G(2,\R^n)$ is the collection of all (two dimensional) planes containing the origin in $\R^n$. It is a compact metric space when equipped with the metric
$$d_{\G(2,\R^n)}(X,Y)=\|P_X-P_Y\|,$$
where $P_X,P_Y$ are the associated projections, and their difference is measured in the operator norm. Consider the function $$F:\G(2,\R^n)^{\Lambda K}\to\C^*$$ defined by
$$F(V_1,\ldots,V_{\Lambda K})=\sup_{g_j\in L^{2}({V_j})}\frac{\|(\prod_{j=1}^{\Lambda K}(g_j\circ \pi_j))^{\frac1{\Lambda K}}\|_{L^n(\R^n)}}{(\prod_{j=1}^{\Lambda K}\|g_j\|_{L^{2}(V_j)})^{\frac1{\Lambda K}}}.$$

Theorems \ref{BCCT1} and \ref{tfek3} combined with Proposition \ref{pfek1} show that $F(V_1,\ldots,V_{\Lambda K})<\infty$ if $(V_1,\ldots,V_{\Lambda K})\in{\SC}_K$.

Theorem 1.2 in \cite{BBFL} proves that if $F(V_1,\ldots,V_{\Lambda K})<\infty$ then there exist $\Theta_{(V_1,\ldots,V_{\Lambda K})}<\infty$ and $\nu_{(V_1,\ldots,V_{\Lambda K})}$ so that the inequality
$$\|(\prod_{j=1}^{\Lambda K}(\sum_{T\in \Q_{j}'(\delta)}1_T))^{1/\Lambda K}\|_{L^{n/2}([-1,1]^n)}\lesssim_\epsilon \Theta_{(V_1,\ldots,V_{\Lambda K})} \delta^{2-\epsilon}(\prod_{j=1}^{\Lambda K}|\Q_{j}'(\delta)|)^{1/\Lambda K}$$
holds for each finite collections $\Q_{j}'(\delta)$ consisting of $\delta$ neighborhoods of planes $V_j'$, so that, up to translation,  $(V_1',\ldots,V_{\Lambda K}')$ is within distance $\nu_{(V_1,\ldots,V_{\Lambda K})}$ from $(V_1,\ldots,V_{\Lambda K})$ in $\G(2,\R^n)$.

It is rather immediate that ${\SC}_K$  is closed in $\G(2,\R^n)^{\Lambda K}$ (each square $R\in\Col_K$ is closed), hence compact. The previous observation produces an open cover of ${\SC}_K$, which will necessarily contain a finite subcover. The theorem now follows.

\end{proof}
\bigskip

\section{The multilinear restriction theorem}
\label{se:multi}
Recall the manifold $\M_{2,k}$ from \eqref{ctuv45t8vt9590v0rt98h]owpq,dirfx[z]qwpxe]}.
For each $S\subset [0,1]^2$ and each  $0<\delta<1$, let $\A_{S,\delta}=\A_{k, S,\delta}$ be the $\delta-$neighborhood of
$$\M_{k,S}:=\{(t,s,\Psi(t,s))\in \M_{2,k}:(t,s)\in S\}.$$

The key result recorded in this section is the  multilinear restriction Theorem \ref{tfek4}. This is a close relative of the multilinear restriction theorem of Bennett, Carbery and Tao \cite{BCT}, which has been recently generalized in \cite{BBFL} by Bennett, Bez, Flock and Lee.
Recall the definition of $\Lambda=\Lambda_k$ from Theorem \ref{tfek3}.

\begin{theorem}
\label{tfek4}Assume Conjecture \ref{ckconjh} holds for some $k\ge 2$. Then, for each pairwise distinct  squares $R_1,\ldots,R_{\Lambda K}\in \Col_K$, each $f_j:\A_{ R_j,\frac1N}\to\C$, each $\epsilon>0$ and  each ball $B_N$ in $\R^n=\R^{\frac{k(k+3)}{2}}$ with radius $N\ge 1$  we have
\begin{equation}
\label{rt ngbrtbpvopfirty54ifitutp[reripf[t5mu89}
\|(\prod_{j=1}^{\Lambda K}\widehat{f}_j)^{\frac1{\Lambda K}}\|_{L^n(B_N)}\lesssim_{\epsilon,K} N^{\epsilon-\frac{n-2}{2}}(\prod_{j=1}^{\Lambda K}\|f_j\|_{L^2(\A_{R_j,\frac1N})})^{\frac1{\Lambda K}}.
\end{equation}
\end{theorem}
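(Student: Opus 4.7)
My strategy would be to derive the multilinear restriction bound \eqref{rt ngbrtbpvopfirty54ifitutp[reripf[t5mu89} from the multilinear Kakeya inequality of Theorem~\ref{tfek2} via a wave packet decomposition, following the framework of Bennett--Carbery--Tao \cite{BCT} in the general form of Bennett--Bez--Flock--Lee \cite{BBFL}. The geometric match is perfect: a function $f_j$ supported in $\A_{R_j,\frac1N}$ has Fourier support of thickness $\sim\frac1N$ in the $n-2$ directions normal to the tangent plane to $\M_{2,k}$ at any $(t,s)\in R_j$, so by the uncertainty principle the natural wave packets for $\widehat{f_j}$ on $B_N$ are plates of length $\sim N$ parallel to these tangent planes---precisely (after the rescaling $B_N\to[-1,1]^n$) the tubes in the family $\Q_{R_j}(\frac1N)$ appearing in Theorem~\ref{tfek2}.

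First I would partition each $\A_{R_j,\frac1N}$ into a finitely overlapping family of rectangular cells $\tau$ of dimensions $\sim\frac1K$ in the two tangent directions and $\sim\frac1N$ in the $n-2$ normal directions, and decompose $f_j=\sum_\tau f_{j,\tau}$ via a smooth partition of unity. Each $\widehat{f_{j,\tau}}$ is then, up to a modulation, a bump of amplitude $|c_{j,\tau}|$ essentially supported on a plate $T_\tau$ dual to $\tau$, with the Plancherel-type bookkeeping identity $\sum_\tau |c_{j,\tau}|^2|T_\tau|\sim \|f_j\|_{L^2(\A_{R_j,\frac1N})}^2$. Summing yields the pointwise majorization
\begin{equation*}
|\widehat{f_j}(x)|\;\lesssim\;\sum_\tau |c_{j,\tau}|\,1_{T_\tau}(x)\;+\;(\text{rapidly decaying tails}),
\end{equation*}
and the tails can be absorbed at the cost of an $N^\epsilon$ factor via standard weighted-ball arguments.

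Next I would reduce $\bigl(\prod_{j=1}^{\Lambda K}|\widehat{f_j}|\bigr)^{1/\Lambda K}$ to the Kakeya expression controlled by Theorem~\ref{tfek2}. A dyadic pigeonhole on the magnitudes $|c_{j,\tau}|$ (losing at most $(\log N)^{O(1)}\leq N^\epsilon$) reduces to the case where, for each $j$, the surviving coefficients are essentially constant of common size $A_j$. Factoring the $A_j$ out and rescaling $x\mapsto x/N$ to send $B_N$ to $[-1,1]^n$, the remaining expression is an $L^{n/2}$ norm of $\prod_j\bigl(\sum_{T\in \Q_{R_j}'(\frac1N)} 1_T\bigr)^{1/\Lambda K}$, which is bounded by Theorem~\ref{tfek2} with $\delta=\frac1N$. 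Converting the cardinalities $|\Q_{R_j}'(\frac1N)|$ back to $L^2$ norms via the Plancherel identity above, and unwinding the rescaling, produces the claimed factor $N^{\epsilon-\frac{n-2}{2}}$, with the full $K$-dependence absorbed into the constant $\Theta_K$.

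The main obstacle is technical: making the wave packet picture rigorous in a form that meshes with the hypotheses of Theorem~\ref{tfek2}. Specifically one must (i) control the Schwartz tails leaking out of the tubes, which is routine once one works with the weight $w_{B_N}$; (ii) handle the varying amplitudes $|c_{j,\tau}|$, either via the dyadic pigeonhole sketched above or by invoking a weighted form of multilinear Kakeya (already contained in \cite{BBFL}); and (iii) verify that the plates produced by the wave packet construction genuinely belong to $\Q_{R_j}(\frac1N)$, which is immediate from the definition of $\Q_{R_j}$. All three points have close analogues in \cite{BCT, BD3, BD4}; the essentially new geometric content, namely the transversality and Brascamp--Lieb structure of $\M_{2,k}$, has been packaged entirely into Theorem~\ref{tfek2} (and through it into Conjecture~\ref{ckconjh}), so the passage from Theorem~\ref{tfek2} to Theorem~\ref{tfek4} is a standard adaptation of the Bennett--Carbery--Tao argument.
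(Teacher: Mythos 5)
Your high-level plan (deduce Theorem \ref{tfek4} from the Kakeya-type Theorem \ref{tfek2} along the lines of \cite{BCT}, \cite{BBFL}) is exactly what the paper intends — it cites those papers and omits the details — but the concrete one-step argument you sketch does not work, for two reasons. First, the wave packet decomposition at tangential scale $1/K$ is not legitimate: over a cap of side $1/K$ the manifold $\M_{2,k}$ deviates from its tangent plane by $\sim K^{-2}$, which for large $N$ is enormous compared with the thickness $1/N$, so $\A_{R_j,\frac1N}$ restricted to such a cap is not comparable to a $\frac1K\times\frac1K\times(\frac1N)^{n-2}$ box, and $\widehat{f_{j,\tau}}$ is not a single modulated bump of a definite amplitude $c_{j,\tau}$ on one dual plate; your Plancherel bookkeeping identity therefore has no meaning. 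Genuine $\frac1N$-thick wave packets require tangential scale $N^{-1/2}$, whose dual plates have thickness $N^{1/2}$ (so the relevant $\delta$ in Theorem \ref{tfek2} is $N^{-1/2}$, not $\frac1N$), and note also that the duality goes the other way from what you wrote: those plates are long in the $n-2$ normal directions and thin in the two tangential ones, i.e.\ they are preimages $\pi_j^{-1}(\delta\text{-ball})$ under the projections onto the tangent planes, which is what the Brascamp--Lieb datum behind Theorem \ref{tfek2} actually encodes.

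Second, and more fundamentally, even with the correct wave packets a single application of Theorem \ref{tfek2} after a dyadic pigeonhole cannot produce \eqref{rt ngbrtbpvopfirty54ifitutp[reripf[t5mu89}. Once you factor out the constant amplitudes $A_j$, the quantity to control is the $L^{n}$ norm of $\prod_j(\sum_{T}1_T)^{1/\Lambda K}$ (first powers of the overlap functions), whereas Theorem \ref{tfek2} controls only the $L^{n/2}$ norm of that product — a reflection of the $L^2(V_j)$ normalization in Theorem \ref{BCCT1}. Bridging the gap crudely, e.g.\ by $\sum_T 1_T\le |\Q_{R_j}'(\delta)|^{1/2}(\sum_T 1_T)^{1/2}$, and then converting cardinalities back to $\|f_j\|_2$ leaves an uncontrolled factor of order $\prod_j|\Q_{R_j}'(\delta)|^{1/(2\Lambda K)}$, which can be a large positive power of $N$; so the claimed factor $N^{\epsilon-\frac{n-2}{2}}$ does not come out. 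This mismatch is precisely why \cite{BCT} (and \cite{BBFL} in the present generality) do not argue at a single scale: the actual derivation is an induction on scales, in which one bounds the multilinear restriction constant at radius $N$ by the constant at radius $N^{1/2}$, applying the inductive hypothesis on each ball of radius $N^{1/2}$ (this is what turns amplitudes into local $L^2$ quantities, i.e.\ squares, matching the $L^{n/2}$ Kakeya functional) and then the Kakeya inequality at scale $\delta=N^{-1/2}$ to aggregate those balls inside $B_N$; iterating the resulting recursive inequality yields the $N^\epsilon$ loss. That induction-on-scales mechanism is the missing idea in your proposal, and without it the passage from Theorem \ref{tfek2} to Theorem \ref{tfek4} fails.
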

The implicit constant in \eqref{rt ngbrtbpvopfirty54ifitutp[reripf[t5mu89} will depend on $\epsilon$ and on the quantity $\Theta_K$ from Theorem \ref{tfek2}. The derivation of this theorem from the related multilinear Kakeya-type inequality follows the argument from  \cite{BCT}, \cite{BBFL}. We omit the details and refer the interested reader to these papers.

\section{The main inequality}
\label{Maininewsecthgg}
In this section we present the key result to our induction on scales-based approach, Proposition \ref{iovjurgyptn8vbgu89357893v7589ty7056893}. To make this strategy easily accessible for future applications,  we chose to present it in a greater generality, allowing for arbitrary manifolds of arbitrary dimension $d$. Our approach is somewhat abstract. There will be no explicit mention or use of transversality, but this will be implicitly contained in Assumption \ref{Ansatz1}.

Let $\M$ be a manifold in $\R^n$ which is the graph $(u,\Psi(u))$ of a $C^\infty$ function $\Psi:[0,1]^d\to\R^{n-d}$.
For each $S\subset [0,1]^d$ and each $\delta>0$, let $\A_{S,\delta}$ be the $\delta-$neighborhood of
$$\M_S:=\{(u,\Psi(u)):\;u\in S\}.$$
Define also the extension operator associated with $\M$
$$E_Sg(x)=\int_Sg(u)e(u\cdot x^d+\Psi(u)\cdot\bar{x})du,\;\;x=(x^d,\bar{x})\in \R^{d}\times \R^{n-d}.$$

Throughout the remainder of the section we fix $\M$, as well as the positive integer $m\ge 1$, the $d$ dimensional squares $R_1,\ldots,R_m\subset [0,1]^d$ and the real number $r\ge 2$,  and we assume that the following holds.
\medskip
\begin{Ansatz}
\label{Ansatz1}
For each $f_j$ supported on $\A_{R_j,\frac1N}$, each $\epsilon>0$ and  each ball $B_N$ in $\R^n$ with radius $N\ge 1$  we have
\begin{equation}
\label{omjngumrgu745r-03th564ro0tu}
\|(\prod_{j=1}^{m}\widehat{f}_j)^{\frac1{m}}\|_{L^r(B_N)}\le \Gamma(\epsilon) N^{\epsilon-\frac{n-d}2}(\prod_{j=1}^{m}\|f_j\|_{L^2(\A_{R_j,\frac1N})})^{\frac1{m}},
\end{equation}
with $\Gamma(\epsilon)=\Gamma(R_1,\ldots,R_m,\epsilon)$ depending on $\epsilon$ but not on $N$, $B_N$ and $f_j$.
\end{Ansatz}

\begin{remark}
\label{rek456}
It is rather immediate that
$$\|(\prod_{j=1}^{m}\widehat{f}_j)^{\frac1{m}}\|_{L^{\infty}(B_N)}\le (\prod_{j=1}^{m}\|\widehat{f}_j\|_{L^{\infty}(\R^n)})^{\frac1{m}}\lesssim  N^{-\frac{n-d}{2}}(\prod_{j=1}^{m}\|f_j\|_{L^2(\A_{R_j,\frac1N})})^{\frac1{m}}.$$
Thus, if \eqref{omjngumrgu745r-03th564ro0tu} holds, then in fact
$$
\|(\prod_{j=1}^{m}\widehat{f}_j)^{\frac1{m}}\|_{L^p(B_N)}\lesssim_{\Gamma(\epsilon),p} N^{\epsilon-\frac{n-d}{2}}(\prod_{j=1}^{m}\|f_j\|_{L^2(\A_{ R_j,\frac1N})})^{\frac1{m}}
$$
holds for each $p\ge r$.

It is worth observing that \eqref{omjngumrgu745r-03th564ro0tu} can not hold for $r<\frac{2n}{d}$ . Indeed, use
$\widehat{f_j}=\phi_{T_j}$, where $\phi_{T_j}$ is a single wave-packet. We can arrange  for the intersection of the plates $T_j$ to contain a ball of radius $\sim N^{\frac12}$. Then \eqref{omjngumrgu745r-03th564ro0tu} yields
$$N^{\frac{n}{2r}}\lesssim_{\Gamma(\epsilon)}N^{\epsilon-\frac{n-d}2}(N^{\frac{d}2}N^{n-d})^{1/2},$$
which amounts to $r\ge \frac{2n}{d}$.

 Typically, \eqref{omjngumrgu745r-03th564ro0tu} can be ensured to be true under  appropriate transversality requirements for the sets $R_i$. These requirements are fairly easy to state when $d=n-1$ and $d=1$, and have led to multilinear theorems in \cite{BCT} and \cite{BD4}, for $r=\frac{2n}{d}$. In Section \ref{se:multi} we proved a similar result when $d=2$, namely Theorem \ref{tfek4}, for the particular manifold $\M_{2,k}$, $k=2,3$.

\end{remark}
\bigskip

We will now derive various consequences of Assumption \ref{Ansatz1}. It is important to realize that all implicit constants will depend on $\Gamma(\epsilon)$.
We start with the following reformulation, which we will prefer in our applications.
\begin{theorem}
\label{tfek1}
If Assumption \ref{Ansatz1} holds true, then for each $g_j:R_j\to\C$, each ball $B_N\subset\R^n$ with radius $N\ge 1$ and each $\epsilon>0$ we have
$$\|(\prod_{j=1}^{m}E_{R_j}g_j)^{\frac1{m}}\|_{L^r(B_N)}\lesssim_{\Gamma(\epsilon)} N^{\epsilon}(\prod_{j=1}^{m}\|g_j\|_{L^2(R_j)})^{\frac1{m}}.$$
\end{theorem}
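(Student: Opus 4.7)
The plan is to pass from the Fourier-transform formulation of Assumption \ref{Ansatz1} to the extension-operator formulation, by representing each $E_{R_j} g_j$ as (a scalar multiple of) the Fourier transform of a specific transverse smoothing of $g_j$ supported in $\A_{R_j, 1/N}$. The scaling is arranged so that the extra $N^{(n-d)/2}$ factor produced by each smoothing cancels exactly the $N^{-(n-d)/2}$ on the right-hand side of \eqref{omjngumrgu745r-03th564ro0tu}, leaving only the desired $N^\epsilon$ loss.

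Concretely, fix a smooth non-negative bump $\phi_0 : \R^{n-d} \to \R$ supported in the unit ball with $\int \phi_0 = 1$. Since $\widehat{\phi_0}$ is continuous with $\widehat{\phi_0}(0) = 1$, there is a universal constant $\tau \in (0, 1]$ with $|\widehat{\phi_0}(\eta)| \geq 1/2$ for $|\eta| \leq \tau$; set $\phi(w) := \tau^{-(n-d)} \phi_0(w/\tau)$, supported in $\{|w| \leq \tau\}$, which satisfies $|\widehat{\phi}(\xi)| = |\widehat{\phi_0}(\tau \xi)| \geq 1/2$ for all $|\xi| \leq 1$. Write the center of $B_N$ as $(c^d, \bar c)$; the identity $E_{R_j} g_j(x^d, \bar x) = E_{R_j}(g_j \cdot e(\Psi(\cdot) \cdot \bar c))(x^d, \bar x - \bar c)$ together with the translation $\bar x \mapsto \bar x - \bar c$ of the integration variable reduces the problem to the case $\bar c = 0$ without altering any $\|g_j\|_{L^2(R_j)}$. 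Assume this, so that $|\bar x| \leq N$ on $B_N$, and define the wave-packet envelopes
\bas
f_j(u, v) := g_j(u) \, N^{n-d} \phi\bigl(N(v - \Psi(u))\bigr), \qquad u \in R_j, \, v \in \R^{n-d},
\eas
which are supported in $\A_{R_j, 1/N}$ because $\tau \leq 1$. A Fubini computation using the change of variable $w = N(v - \Psi(u))$ gives
\bas
\widehat{f}_j(x^d, \bar x) &= E_{R_j} g_j(x^d, \bar x) \cdot \widehat{\phi}(\bar x/N), \\
\|f_j\|_{L^2(\A_{R_j, 1/N})}^2 &= N^{n-d} \|\phi\|_{L^2}^2 \|g_j\|_{L^2(R_j)}^2.
\eas

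Since $|\widehat{\phi}(\bar x/N)| \geq 1/2$ for $|\bar x| \leq N$, the pointwise bound $|E_{R_j} g_j(x)| \leq 2 |\widehat{f}_j(x)|$ holds on $B_N$. Feeding the $f_j$ into Assumption \ref{Ansatz1} and substituting the $L^2$ identity, the $N^{(n-d)/2}$ factor from each $f_j$ cancels the $N^{-(n-d)/2}$ in \eqref{omjngumrgu745r-03th564ro0tu}, and one arrives at
\bas
\|(\prod_{j=1}^m E_{R_j} g_j)^{1/m}\|_{L^r(B_N)} \leq 2 \|(\prod_{j=1}^m \widehat{f}_j)^{1/m}\|_{L^r(B_N)} \lesssim_{\Gamma(\epsilon)} N^\epsilon \prod_{j=1}^m \|g_j\|_{L^2(R_j)}^{1/m},
\eas
as claimed. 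The \emph{only} subtle point, and the one I regard as the main obstacle, is calibrating $\phi$ so that $|\widehat{\phi}|$ is bounded below on the full unit ball rather than only near $0$, since $B_N$ reaches out to $|\bar x| \sim N$ even after the centering reduction; this forces the concentration of $\phi$ onto the smaller scale $\tau$, with the constraint $\tau \leq 1$ guaranteeing that the envelopes $f_j$ still sit inside $\A_{R_j, 1/N}$.
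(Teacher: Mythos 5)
Your proof is correct, and it implements the reduction in the mirror-image way to the paper. The paper inserts the $\frac1N$-scale mollification on the spatial side: it fixes a Schwartz bump $\eta$ with $1_{B(0,1)}\le \eta$ and $\supp\widehat{\eta}\subset B(0,\frac1{100})$, sets $\eta_{B_N}(x)=\eta(\frac{x-c(B_N)}{N})$, and takes $\widehat{f_j}=(E_{R_j}g_j)\eta_{B_N}$; then $|E_{R_j}g_j|\le|\widehat{f_j}|$ on $B_N$ for free, the Fourier support lands in $\A_{R_j,\frac1N}$ because the cutoff only blurs frequencies at scale $\frac1{100N}$, and the $L^2$ norm is $O(N^{\frac{n-d}2}\|g_j\|_2)$ by a routine almost-orthogonality/Schur-test estimate (the paper's text prints this exponent with a minus sign, but your exact identity $\|f_j\|_2^2=N^{n-d}\|\phi\|_2^2\|g_j\|_2^2$ confirms that $+\frac{n-d}2$ is the power that cancels the $N^{-\frac{n-d}2}$ in \eqref{omjngumrgu745r-03th564ro0tu}). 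You instead build $f_j$ directly on the frequency side as a transverse thickening $g_j(u)\,N^{n-d}\phi(N(v-\Psi(u)))$ of $g_j\,d\sigma$, which makes the support and $L^2$ computations exact Fubini identities, but at the cost of the two calibration steps you flagged: the lower bound $|\widehat{\phi}|\ge\frac12$ on the whole unit ball and the modulation identity $E_{R_j}g_j(x^d,\bar x)=E_{R_j}(g_je(\Psi(\cdot)\cdot\bar c))(x^d,\bar x-\bar c)$ to recenter the ball so that $|\bar x|\le N$ on it; both steps are handled correctly (the modulation leaves $\|g_j\|_{L^2}$ unchanged and Assumption \ref{Ansatz1} applies to the recentered ball). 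The paper's choice is slightly slicker precisely because the cutoff travels with $B_N$, so no recentering or Fourier lower bound is needed and the argument is manifestly uniform over all balls; your version buys exact constants and makes the cancellation of the $N^{\pm\frac{n-d}2}$ factors completely transparent. Either route proves Theorem \ref{tfek1}.
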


To see that  Assumption \ref{Ansatz1} implies  Theorem \ref{tfek1}, choose a positive Schwartz function $\eta$ on $\R^n$ such that
$$1_{B(0,1)}\le \eta,\;\;\text{ and }\;\;\supp\;\widehat{\eta}\subset B(0,\frac1{100}),$$
and let
\begin{equation}
\label{fek15}
\eta_{B_N}(x)=\eta(\frac{x-c(B_N)}{N}).
\end{equation}
Then, for $g_j$ as in Theorem \ref{tfek1},
$$\|(\prod_{j=1}^{m}E_{R_j}g_j)^{\frac1{m}}\|_{L^r(B_N)}\le \|(\prod_{j=1}^{m}((E_{R_j}g_j)\eta_{B_N}))^{\frac1{m}}\|_{L^r(B_N)}.$$
It suffices to note that the Fourier transform of $(E_{R_j}g_j)\eta_{B_N}$ is supported in $\A_{R_j,\frac1N}$ and that its $L^2$ norm is  $O(N^{-\frac{n-d}2}\|g_j\|_2)$.
\bigskip

We have the following consequence of Theorem \ref{tfek1}.
\begin{corollary}
\label{fc1}
If Assumption \ref{Ansatz1} holds true, then  for each $r\le p\le \infty$, $\epsilon>0$, each ball $B_N\subset\R^n$ with radius $N\ge 1$ and  $g_i:R_i\to\C$ we have
\begin{equation}
\label{we3}
\|(\prod_{i=1}^{m}\sum_{\atop{l(\Delta)=N^{-1/2}}} |E_{\Delta}g_i|^2)^{\frac1{2m}}\|_{L^{p}(w_{B_N})}\lesssim_{\Gamma(\epsilon),p} N^{-\frac{r(n-d)}{2p}+\epsilon}(\prod_{i=1}^{m}\sum_{\atop{l(\Delta)=N^{-1/2}}}\|E_{\Delta}g_i\|_{L^{\frac{2p}r}(w_{B_N})}^2)^{\frac1{2m}}.
\end{equation}
\end{corollary}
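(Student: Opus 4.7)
The plan is to derive a square-function version of Theorem \ref{tfek1} via Rademacher randomization, and then interpolate it against a trivial $L^\infty$ endpoint to obtain the stated $L^p/L^{2p/r}$ form.

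First, decompose each $g_i = \sum_\Delta g_{i,\Delta}$ with $g_{i,\Delta} = g_i 1_\Delta$, and introduce independent Rademacher signs $\{\epsilon_{i,\Delta}\}$. Applying Theorem \ref{tfek1} at exponent $r$ to the randomizations $\tilde g_i = \sum_\Delta \epsilon_{i,\Delta} g_{i,\Delta}$, raising to the $r$-th power, and averaging over the $\epsilon$'s using Fubini and the pointwise Khintchine inequality (separately for each $i$, by independence of the $\epsilon_i$ across $i$) yields
\begin{equation*}
\|F\|_{L^r(B_N)} \lesssim N^\epsilon \prod_{i=1}^m \|g_i\|_{L^2}^{1/m}, \qquad F = \prod_{i=1}^m \Bigl(\sum_\Delta |E_\Delta g_i|^2\Bigr)^{1/(2m)}.
\end{equation*}
Since $\Delta$ has side $N^{-1/2}$ and $B_N$ radius $N = (N^{-1/2})^{-2}$, the standard local $L^2$ identity $\|E_\Delta g_{i,\Delta}\|_{L^2(w_{B_N})} \sim N^{(n-d)/2} \|g_{i,\Delta}\|_{L^2(\Delta)}$ applies, and combined with the orthogonality $\|g_i\|_2^2 = \sum_\Delta \|g_{i,\Delta}\|_2^2$ this produces the $p = r$ case of the corollary. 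Simultaneously, the trivial pointwise bound $|F(x)|^{2m} \le \prod_i \sum_\Delta \|E_\Delta g_i\|_{L^\infty(w_{B_N})}^2$ gives the $p=\infty$ endpoint $\|F\|_{L^\infty(w_{B_N})} \le \prod_i (\sum_\Delta \|E_\Delta g_i\|_{L^\infty(w_{B_N})}^2)^{1/(2m)}$ with no $N$-factor.

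To pass to general $r \le p \le \infty$, interpolate between these two endpoints with parameter $\theta = (p-r)/p \in [0,1]$. Under this interpolation, the target spaces $L^r \to L^\infty$ on the left produce $L^p$ (since $1/p_\theta = (1-\theta)/r = 1/p$), the source norms $L^2 \to L^\infty$ on each $E_\Delta g_i$ produce $L^{2/(1-\theta)} = L^{2p/r}$, and the $N$-exponents interpolate linearly as $-(1-\theta)(n-d)/2 = -r(n-d)/(2p)$, exactly matching the statement. Carrying out this interpolation for the positive multilinear form $F$ via the complex method---after the standard linearization that views the product of square functions as a sublinear functional on the sequences $\{E_\Delta g_i\}_{i,\Delta}$---yields the corollary.

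The main obstacle lies precisely in this interpolation step. A naive pointwise Hölder bound $\|F\|_p^p \le \|F\|_r^r \|F\|_\infty^{p-r}$ combined with the log-convexity $\|E_\Delta g_i\|_{L^{2p/r}}^2 \le \|E_\Delta g_i\|_{L^2}^{2r/p} \|E_\Delta g_i\|_{L^\infty}^{2(p-r)/p}$ would produce an estimate involving $(\sum_\Delta \|E_\Delta g_i\|_{L^2}^2)^{r/p} (\sum_\Delta \|E_\Delta g_i\|_{L^\infty}^2)^{(p-r)/p}$, which is strictly larger than the desired $\sum_\Delta \|E_\Delta g_i\|_{L^{2p/r}}^2$ by an application of Hölder in $\Delta$. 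Exchanging the $\ell^2$-sum over $\Delta$ and the spatial $L^q$-norm sharply---which is what distinguishes the corollary from this naive bound---is precisely what the complex interpolation applied to the positive multilinear form achieves, exploiting the positivity of $F$ to legitimize the interpolation of the sublinear functional.
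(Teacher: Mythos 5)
Your overall architecture (randomize with Rademacher signs and Khintchine to get the square function at the endpoint $p=r$, then interpolate against the trivial $L^\infty$ bound) is the same as the paper's, but there is a genuine gap in how you reach the $p=r$ case. Starting from Theorem \ref{tfek1} your Khintchine step produces the right-hand side $N^{\epsilon}\prod_i\|g_i\|_{L^2}^{1/m}$, and to convert this into the corollary's right-hand side you invoke the two-sided ``local $L^2$ identity'' $\|E_{\Delta}g_{i,\Delta}\|_{L^2(w_{B_N})}\sim N^{(n-d)/2}\|g_{i,\Delta}\|_{L^2(\Delta)}$. Only the upper bound $\lesssim$ in this comparison is true; the lower bound, which is exactly the direction you need (you must dominate $\|g_{i,\Delta}\|_{L^2}$ by $N^{-(n-d)/2}\|E_{\Delta}g_i\|_{L^2(w_{B_N})}$), is false. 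Indeed, replacing $g_{i,\Delta}(u)$ by $g_{i,\Delta}(u)\,e(-u\cdot y^d-\Psi(u)\cdot\bar y)$ translates $E_{\Delta}g_{i,\Delta}$ by $y$; choosing $y$ far from $B_N$ makes $\|E_{\Delta}g_{i,\Delta}\|_{L^2(w_{B_N})}$ arbitrarily small while $\|g_{i,\Delta}\|_{L^2}$ is unchanged. Consequently your argument only yields the strictly weaker estimate with $N^{(n-d)/2}\|g_{i,\Delta}\|_{L^2}$ in place of $\|E_{\Delta}g_i\|_{L^2(w_{B_N})}$ on the right, and the weighted local norms are essential both for the statement and for its later use (in Proposition \ref{iovjurgyptn8vbgu89357893v7589ty7056893} one exploits almost orthogonality and the locally constant property of $\|E_{\Delta}g_i\|_{L^2(w_{B})}$ at scale $N^{-1}$, which has no analogue for the global norms $\|g_{i,\Delta}\|_{L^2}$).

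The repair is to avoid the detour through Theorem \ref{tfek1} altogether: apply Assumption \ref{Ansatz1} directly to the functions $f_i$ whose Fourier transform is $(E_{R_i}g_i)\eta_{B_N}$, which are supported in $\A_{R_i,\frac1N}$ and satisfy $\|f_i\|_{L^2}\approx\|E_{R_i}g_i\|_{L^2(w_{B_N})}$; since the pieces $((E_{\Delta}g_i)\eta_{B_N})_{\Delta}$ have finitely overlapping Fourier supports $\A_{\Delta,\frac1N}$ when $l(\Delta)=N^{-1/2}$, almost orthogonality gives $\|E_{R_i}g_i\|_{L^2(w_{B_N})}^2\lesssim\sum_{\Delta}\|E_{\Delta}g_i\|_{L^2(w_{B_N})}^2$, so the multilinear estimate already comes with the weighted local $L^2$ norms; these are invariant under the sign changes $g_{i,\Delta}\mapsto\epsilon_{i,\Delta}g_{i,\Delta}$, so your randomization then gives the correct $p=r$ endpoint. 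This is precisely the paper's proof. Your final interpolation step is also stated rather loosely (a naive H\"older/log-convexity argument indeed does not suffice, as you note, and ``complex interpolation of the sublinear functional'' needs the specific mechanism using the Fourier support of the $E_{\Delta}g_i$), but the paper itself defers this point to \cite{BD3}, so the decisive defect is the false two-sided local $L^2$ comparison.
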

\begin{proof}
Consider the function $\eta_{B_N}$ introduced earlier. Note that for each $i$ the functions $((E_{\Delta}g_i)\eta_{B_N})_{\Delta}$ are almost orthogonal in the $L^2$ sense. Combining this  with  Assumption \ref{Ansatz1} we get the following local inequality
$$\|(\prod_{i=1}^{m} |E_{R_i}g_i|)^{1/m}\|_{L^{r}(w_{B_N})}\lesssim_{\Gamma(\epsilon)} N^{-\frac{n-d}2+\epsilon}(\prod_{i=1}^{m}\sum_{\atop{l(\Delta)=N^{-1/2}}}\|E_{\Delta}g_i\|_{L^{2}(w_{B_N})}^2)^{\frac1{2m}}.$$
A  randomization argument further leads to the inequality
$$\|(\prod_{i=1}^{m}\sum_{\atop{l(\Delta)=N^{-1/2}}} |E_{\Delta}g_i|^2)^{\frac1{2m}}\|_{L^{r}(w_{B_N})}\lesssim_{\Gamma(\epsilon)} N^{-\frac{n-d}2+\epsilon}(\prod_{i=1}^{m}\sum_{\atop{l(\Delta)=N^{-1/2}}}\|E_{\Delta}g_i\|_{L^{2}(w_{B_N})}^2)^{\frac1{2m}}.$$
It now suffices to interpolate this with the trivial inequality
$$\|(\prod_{i=1}^{m}\sum_{\atop{l(\Delta)=N^{-1/2}}} |E_{\Delta}g_i|^2)^{\frac1{2m}}\|_{L^{\infty}(w_{B_N})}\le (\prod_{i=1}^{m}\sum_{\atop{l(\Delta)=N^{-1/2}}}\|E_{\Delta}g_i\|_{L^{\infty}(w_{B_N})}^2)^{\frac1{2m}}.$$
We refer the reader to \cite{BD3} for how this type of interpolation is performed.
\end{proof}
\bigskip

For $p\ge r$ define $\kappa_p$ such that
$$\frac{r}{2p}=\frac{1-\kappa_p}{2}+\frac{\kappa_p}{p},$$
in other words, $$\kappa_p=\frac{p-r}{p-2}.$$

As observed earlier, the case $r=\frac{2n}{d}$ is an endpoint, so it will naturally  produce the strongest applications. In this case, we get the following key inequality.

\begin{proposition}
\label{iovjurgyptn8vbgu89357893v7589ty7056893}
If Assumption \ref{Ansatz1} holds true with
$$r=\frac{2n}{d},$$then
for  each ball $B_R$ in $\R^n$ with radius $R\ge N\ge 1$, $p\ge r$, $\epsilon>0$, $\kappa_p\le \kappa\le 1$ and $g_i:R_i\to \C$ we have
$$\|(\prod_{i=1}^{m}\sum_{\atop{l(\tau)=N^{-1/2}}}|E_{\tau}g_i|^2)^{\frac1{2m}}\|_{L^{p}(w_{B_R})}\lesssim_{\Gamma(\epsilon),p}$$
$$
N^{\epsilon}\|(\prod_{i=1}^{m}\sum_{\atop{l(\Delta)=N^{-1}}}|E_{\Delta}g_i|^2)^{\frac1{2m}}
\|_{L^{p}(w_{B_R})}^{1-\kappa}
(\prod_{i=1}^{m}\sum_{\atop{l(\tau)=N^{-1/2}}}\|E_{\tau}g_i\|_{L^{p}(w_{B_R})}^2)^{\frac{\kappa}{2m}}.
$$
\end{proposition}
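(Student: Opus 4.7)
The plan is to apply Corollary \ref{fc1} at scale $N$ on a family of small balls covering $B_R$, interpolate the resulting $L^{2p/r}$ norms between $L^2$ and $L^p$, and convert the $L^2$ contribution into the $L^p$ factor at the finer scale $N^{-1}$ using approximate $L^2$-orthogonality and the locally constant property.

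First I cover $B_R$ by a finitely overlapping family $\{B_N^*\}$ of balls of radius $N$ with $\sum_{B_N^*} w_{B_N^*} \lesssim w_{B_R}$. On each $B_N^*$, Corollary \ref{fc1} with $r = 2n/d$ gives
$$\bigl\|\bigl(\prod_{i=1}^m \sum_\tau |E_\tau g_i|^2\bigr)^{\frac{1}{2m}}\bigr\|_{L^p(w_{B_N^*})} \lesssim N^{-\frac{r(n-d)}{2p}+\epsilon} \prod_{i=1}^m \Bigl(\sum_\tau \|E_\tau g_i\|_{L^{2p/r}(w_{B_N^*})}^2\Bigr)^{\frac{1}{2m}}.$$
Hölder's interpolation $\|E_\tau g_i\|_{L^{2p/r}} \leq \|E_\tau g_i\|_{L^2}^{1-\kappa_p} \|E_\tau g_i\|_{L^p}^{\kappa_p}$ together with Hölder's inequality on the sum over $\tau$ splits the right side into an $L^2$-factor to the power $1-\kappa_p$ and an $L^p$-factor to the power $\kappa_p$. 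For the $L^2$-factor I invoke approximate $L^2$-orthogonality to reduce to squares $\Delta$ of side $N^{-1}$, obtaining $\sum_\tau \|E_\tau g_i\|_{L^2(w_{B_N^*})}^2 \lesssim \sum_\Delta \|E_\Delta g_i\|_{L^2(w_{B_N^*})}^2$; the locally constant property of $E_\Delta g_i$ on balls of radius $N$ then yields $\|E_\Delta g_i\|_{L^2(w_{B_N^*})}^2 \approx N^{n(1-2/p)}\|E_\Delta g_i\|_{L^p(w_{B_N^*})}^2$ and $\sum_\Delta \|E_\Delta g_i\|_{L^p(w_{B_N^*})}^2 \approx \|(\sum_\Delta |E_\Delta g_i|^2)^{1/2}\|_{L^p(w_{B_N^*})}^2$. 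The crucial point is that the same locally constant principle produces the multilinear identification
$$\prod_{i=1}^m \bigl\|(\sum_\Delta |E_\Delta g_i|^2)^{1/2}\bigr\|_{L^p(w_{B_N^*})}^{1/m} \approx \bigl\|\bigl(\prod_{i=1}^m\sum_\Delta |E_\Delta g_i|^2\bigr)^{1/(2m)}\bigr\|_{L^p(w_{B_N^*})} =: A(B_N^*),$$
which recovers the joint $L^p$-norm appearing on the right side of the statement rather than the weaker product of individual $L^p$-norms. A direct computation confirms that the powers of $N$ cancel exactly when $r = 2n/d$ and $\kappa_p = (p-r)/(p-2)$, leaving the local bound $\lesssim N^\epsilon A(B_N^*)^{1-\kappa_p} B(B_N^*)^{\kappa_p}$, where $B(B_N^*) = \prod_i (\sum_\tau \|E_\tau g_i\|_{L^p(w_{B_N^*})}^2)^{1/(2m)}$.

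Finally I globalize: raising to the $p$-th power and summing over $B_N^*$, Hölder's inequality on the $B_N^*$-sum together with $\sum_{B_N^*} A(B_N^*)^p \lesssim A^p$ (immediate from $\sum w_{B_N^*} \lesssim w_{B_R}$) and $\sum_{B_N^*} B(B_N^*)^p \lesssim B^p$ (via Hölder in $i$ followed by Minkowski in $\ell^{p/2}(\tau)$, valid since $p \geq 2$) proves the case $\kappa = \kappa_p$. For $\kappa \in [\kappa_p, 1]$ I combine with the trivial estimate $\|(\prod_i \sum_\tau |E_\tau g_i|^2)^{1/(2m)}\|_{L^p(w_{B_R})} \leq B$, which follows from Hölder $\|\prod f_i^{1/m}\|_{L^p} \leq \prod \|f_i\|_{L^p}^{1/m}$ and Minkowski: when $B \leq A$ the trivial bound already gives $\leq B \leq A^{1-\kappa}B^\kappa$, and when $B \geq A$ the product $A^{1-\kappa}B^\kappa$ is non-decreasing in $\kappa$, so the $\kappa_p$ bound implies the desired one. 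I expect the main technical obstacle to be the weighted $L^2$-orthogonality and the multilinear locally constant identification producing $A(B_N^*)$; both are standard but require the usual care with Schwartz tails when passing from plain to weighted $L^p$ norms on the ball $B_N^*$.
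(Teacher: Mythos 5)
Your proposal is correct and follows essentially the same route as the paper's proof: apply Corollary \ref{fc1} on balls of radius $N$, interpolate the $L^{2p/r}$ norms between $L^2$ and $L^p$, use $L^2$ almost orthogonality to pass to scale $N^{-1}$ and the locally constant property to recover the joint $L^p$ norm (with the exponents cancelling exactly at $r=\frac{2n}{d}$, $\kappa=\kappa_p$), and then sum over the balls via H\"older and Minkowski. Your handling of $\kappa_p\le\kappa\le 1$ by the case analysis $B\le A$ versus $B\ge A$ is just a rephrasing of the paper's reduction to $\kappa=\kappa_p$ via the trivial $\kappa=1$ estimate, so no substantive difference there either.
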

\bigskip

\begin{remark}
A simple computation using $g_i=1_{\tau_i}$, with $\tau_i$ an arbitrary $d$ dimensional square with $l(\tau)=N^{-1/2}$, shows that the inequality is false for $\kappa<\kappa_p$. This is the main restriction that prevents us from getting a better range in Theorem \ref{tfek6}, as will become apparent throughout the computations done in the last section of the paper.
\end{remark}
\bigskip

\begin{proof}
The inequality is immediate for $\kappa=1$ via a combination of H\"older's and Minkowski's inequalities. It thus suffices to prove it for $\kappa=\kappa_p$.

Let  $B$ be an arbitrary ball of radius $N$ in $\R^n$. We start by recalling that \eqref{we3} on $B$ gives
\begin{equation}
\label{fe20}
\|(\prod_{i=1}^{m}\sum_{\atop{l(\tau)=N^{-1/2}}} |E_{\tau}g_i|^2)^{\frac1{2m}}\|_{L^{p}(w_{B})}\lesssim_{\Gamma(\epsilon),p} N^{-\frac{r(n-d)}{2p}+\epsilon}(\prod_{i=1}^{m}\sum_{\atop{l(\tau)=N^{-1/2}}}\|E_{\tau}g_i\|_{L^{2p/r}(w_{B})}^2)^{\frac1{2m}}.
\end{equation}
Write using H\"older's inequality
\begin{equation}
\label{we7}
(\sum_{\atop{l(\tau)=N^{-1/2}}}\|E_{\tau}g_i\|_{L^{2p/r}(w_{B})}^2)^{\frac1{2}}\le (\sum_{\atop{l(\tau)=N^{-1/2}}}\|E_{\tau}g_i\|_{L^{2}(w_{B})}^2)^{\frac{1-\kappa_p}{2}}(\sum_{\atop{l(\tau)=N^{-1/2}}}\|E_{\tau}g_i\|_{L^{p}(w_{B})}^2)^{\frac{\kappa_p}{2}}.
\end{equation}
The  key element in our argument is the almost orthogonality specific to $L^2$, which will allow us to pass from scale $N^{-1/2}$ to scale $N^{-1}$. Indeed, since $(E_\Delta g_i)w_{B}$ are almost orthogonal for $l(\Delta)=N^{-1}$, we have
\begin{equation}
\label{dskjuhu9vhyvg89tyvgn894}
(\sum_{\atop{l(\tau)=N^{-1/2}}}\|E_{\tau}g_i\|_{L^{2}(w_{B})}^2)^{1/2}\lesssim (\sum_{\atop{l(\Delta)=N^{-1}}}\|E_{\Delta}g_i\|_{L^{2}(w_{B})}^2)^{1/2}.
\end{equation}
We can now rely on the fact that $|E_{\Delta}g_i|$ is essentially constant on balls $B'$ of radius $N$ to argue that
$$(\sum_{\atop{l(\Delta)=N^{-1}}}\|E_{\Delta}g_i\|_{L^{2}(B')}^2)^{\frac1{2}}\sim |B'|^{1/2}(\sum_{\atop{l(\Delta)=N^{-1}}}|E_{\Delta}g_i(x)|^2)^{\frac1{2}},\text{ for }x\in{B'}$$
and thus
\begin{equation}
\label{we8}
(\prod_{i=1}^{m}\sum_{\atop{l(\Delta)=N^{-1}}}\|E_{\Delta}g_i\|_{L^{2}(w_{B})}^2)^{\frac1{2m}}\lesssim |B|^{\frac12-\frac1p}\|(\prod_{i=1}^{m}\sum_{\atop{l(\Delta)=N^{-1}}}|E_{\Delta}g_i|^2)^{\frac1{2m}}\|_{L^{p}(w_{B})}.
\end{equation}
Combining \eqref{fe20},  \eqref{we7}, \eqref{dskjuhu9vhyvg89tyvgn894}, \eqref{we8} with the fact that
$$n(\frac12-\frac1p)\frac{r-2}{p-2}=r\frac{n-d}{2p},\;\;\text{when }r=\frac{2n}{d},$$ we get
$$
\|(\prod_{i=1}^{m}\sum_{\atop{l(\tau')=N^{-1/2}}}|E_{\tau}g_i|^2)^{\frac1{2m}}\|_{L^{p}(w_{B})}\lesssim_{\Gamma(\epsilon),p}$$
$$N^{\epsilon} \|(\prod_{i=1}^{m}\sum_{\atop{l(\Delta)=N^{-1}}}|E_{\Delta}g_i|^2)^{\frac1{2m}}\|_{L^{p}(w_{B})}^{1-\kappa_p}
(\prod_{i=1}^{m}\sum_{\atop{l(\tau)=N^{-1/2}}}\|E_{\tau}g_i\|_{L^{p}(w_{B})}^2)^{\frac{\kappa_p}{2m}}.
$$
Summing this up over a finitely overlapping family of balls $B\subset B_R$ of radius $N$, we get the desired inequality, upon invoking the inequalities of H\"older and Minkowski.
\end{proof}
\medskip

We close this section with specializing the result of Proposition \ref{iovjurgyptn8vbgu89357893v7589ty7056893} to the manifold $\M=\M_{2,k}$. Recall the notation for the extension operator $E^{(k)}$ defined by $\M_{2,k}$.

\begin{corollary}
\label{gjityiophjytophpotigirti0-we=fdcwee=w=}
Assume Conjecture \ref{ckconjh} holds for some $k\ge 2$ and let $n=\frac{k(k+3)}{2}$. Then for each $K\ge 2$, $p\ge n$ and $\epsilon>0$ there exists a constant $C_{p,K,\epsilon}$ such that for each pairwise distinct  squares $R_1,\ldots,R_{\Lambda K}\in \Col_K$, each ball $B_R$ in $\R^n$ with radius $R\ge N\ge 1$ and each $g_i:R_i\to \C$ we have
$$\|(\prod_{i=1}^{\Lambda K}\sum_{\atop{l(\tau)=N^{-1/2}}}|E_{\tau}^{(k)}g_i|^2)^{\frac1{2\Lambda K}}\|_{L^{p}(w_{B_R})}\le $$$$C_{p,K,\epsilon}
N^{\epsilon}\|(\prod_{i=1}^{\Lambda K}\sum_{\atop{l(\Delta)=N^{-1}}}|E_{\Delta}^{(k)}g_i|^2)^{\frac1{2\Lambda K}}
\|_{L^{p}(w_{B_R})}^{1-\kappa_p}
(\prod_{i=1}^{\Lambda K}\sum_{\atop{l(\tau)=N^{-1/2}}}\|E_{\tau}^{(k)}g_i\|_{L^{p}(w_{B_R})}^2)^{\frac{\kappa_p}{2\Lambda K}},
$$
where $$\kappa_p=\frac{p-n}{p-2}.$$
\end{corollary}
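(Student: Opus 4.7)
The plan is to obtain this as a direct specialization of Proposition \ref{iovjurgyptn8vbgu89357893v7589ty7056893} to the abstract setup with $\M=\M_{2,k}$, ambient dimension $n=\frac{k(k+3)}{2}$, manifold dimension $d=2$ and multiplicity $m=\Lambda K$. With these parameters, the threshold $r=\frac{2n}{d}$ becomes $r=n$, and the exponent $\kappa_p=\frac{p-r}{p-2}$ becomes exactly $\frac{p-n}{p-2}$, matching the one in the statement.

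First, I would verify that Assumption \ref{Ansatz1} holds in this setting, with $r=n$ and with a constant $\Gamma(\epsilon)$ that is \emph{uniform} over all $\Lambda K$-tuples of pairwise distinct squares from $\Col_K$. This is precisely the content of Theorem \ref{tfek4}: under Conjecture \ref{ckconjh}, for any pairwise distinct $R_1,\ldots,R_{\Lambda K}\in\Col_K$, any $f_j$ supported on $\A_{R_j,\frac1N}$, and any ball $B_N\subset\R^n$ of radius $N\ge 1$, one has
\begin{equation*}
\|(\prod_{j=1}^{\Lambda K}\widehat{f}_j)^{\frac1{\Lambda K}}\|_{L^n(B_N)}\lesssim_{\epsilon,K} N^{\epsilon-\frac{n-2}{2}}(\prod_{j=1}^{\Lambda K}\|f_j\|_{L^2(\A_{R_j,\frac1N})})^{\frac1{\Lambda K}}.
\end{equation*}
The implicit constant in Theorem \ref{tfek4} depends only on $\epsilon$ and on the quantity $\Theta_K$ from Theorem \ref{tfek2}, so it qualifies as the $\Gamma(\epsilon)$ appearing in Assumption \ref{Ansatz1}, and it is uniform in the choice of tuple $(R_1,\ldots,R_{\Lambda K})$ of pairwise distinct squares.

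Second, I would apply Proposition \ref{iovjurgyptn8vbgu89357893v7589ty7056893} verbatim with the above parameters and with $\kappa=\kappa_p$. This yields the desired inequality with an implicit constant bounded in terms of $\Gamma(\epsilon)$ and $p$; since $\Gamma(\epsilon)$ only depends on $\epsilon$ and $K$, we can absorb it into a single constant $C_{p,K,\epsilon}$ depending solely on $p$, $K$ and $\epsilon$, as required. The range $p\ge n=r$ matches the hypothesis of Proposition \ref{iovjurgyptn8vbgu89357893v7589ty7056893}, and the weights $w_{B_R}$ and balls of radius $R\ge N$ carry over without modification.

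There is no serious obstacle here: the entire content of the corollary is that the abstract multiscale inequality of Proposition \ref{iovjurgyptn8vbgu89357893v7589ty7056893} can be fed with the specific multilinear restriction inequality of Theorem \ref{tfek4}. The only minor point to check is the matching of numerology, in particular that $2n/d=n$ when $d=2$ and that the two definitions of $\kappa_p$ coincide, both of which are immediate.
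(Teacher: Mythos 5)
Your proposal is correct and is essentially identical to the paper's own proof: the paper also obtains the corollary by noting that Theorem \ref{tfek4} furnishes Assumption \ref{Ansatz1} with $\M=\M_{2,k}$, $m=\Lambda K$ and $r=n=\frac{2n}{d}$ (since $d=2$), with $\Gamma(\epsilon)$ depending only on $\epsilon$ and $K$, and then applies Proposition \ref{iovjurgyptn8vbgu89357893v7589ty7056893}. Your additional checks on the numerology and the uniformity of the constant over tuples of squares are exactly the right points to note.
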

\begin{proof}
Theorem \ref{tfek4} shows  that Assumption \ref{Ansatz1} is satisfied with $\M=\M_{2,k}$, $m=K\Lambda$ and $r=n$, for each pairwise distinct  squares $R_1,\ldots,R_{\Lambda K}\in \Col_K$. Moreover, the constant $\Gamma(\epsilon)$ in \eqref{omjngumrgu745r-03th564ro0tu} will depend on $K$.
\end{proof}

\section{Rescaling}
A crucial feature of our argument is the fact that the manifold $\M_{2,k}$  has a certain invariance under rescaling. Recall the definition \eqref{lp[.gi0ty-hl9yb./v045tiuh,89} of  the extension operator $E^{(k)}$ defined by the manifold $\M_{2,k}$.

Assume $R=[a,a+\delta]\times [b,b+\delta]$. The affine change of variables
$$(t,s)\in R\mapsto(t',s')=\eta(t,s)=(\frac{t-a}{\delta},\frac{s-b}{\delta})\in[0,1]^2$$ shows that
$$|E_R^{(k)}g(x)|=\delta^2|E_{[0,1]^2}^{(k)}g^{a,b}(\bar{x})|$$
where
$$g^{a,b}(t',s')=g(\delta t'+a,\delta s'+b),$$
and the relation between $x=(x_1,\ldots,x_n)$ and $\bar{x}=(\bar{x}_1,\ldots,\bar{x}_n)$ is
$$\bar{x}_1=\delta(x_1+2ax_3+bx_5+\ldots),$$
$$\bar{x}_2=\delta(x_2+2bx_4+ax_5+\ldots),$$
$$\bar{x}_3=\delta^2(x_3+\ldots),\;\;\bar{x}_4=\delta^2(x_4+\ldots),\ldots,\bar{x}_n=\delta^k(x_n+\ldots). $$
One of the key applications of this invariance is given by the following result.
\begin{proposition}
\label{rvty090-0w=qw}
For each $p\ge 1$, each square $R=[a,a+\delta]\times [b,b+\delta]$ with side length $\delta=N^{-\rho}$,  $\rho\le \frac1k$ and each ball $B_N$ in $\R^n=\R^{\frac{k(k+3)}{2}}$ we have
\begin{equation}
\label{fe22}
\|E_R^{(k)}g\|_{L^p(w_{B_N})}\lesssim D_k(N^{1-k\rho},p)(\sum_{\Delta\subset R\atop{l(\Delta)=N^{-1/k}}}\|E_\Delta^{(k)} g\|_{L^p(w_{B_N})}^p)^{1/p}.
\end{equation}
\end{proposition}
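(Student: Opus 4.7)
The plan is a direct rescaling argument built on the affine invariance of $\M_{2,k}$ that is spelled out immediately above the proposition. Set $N' := N^{1-k\rho}$, which is $\ge 1$ by the hypothesis $\rho\le 1/k$. Let $\eta(t,s)=((t-a)/\delta,(s-b)/\delta)$; then $\eta$ sends $R$ to $[0,1]^2$ and each square $\Delta\subset R$ with $l(\Delta)=N^{-1/k}$ to a square $\Delta'\subset [0,1]^2$ with $l(\Delta')=N^{-1/k}/\delta=(N')^{-1/k}$. The identities recorded just above the proposition give
$$|E_R^{(k)}g(x)|=\delta^2|E_{[0,1]^2}^{(k)}g^{a,b}(\bar x)|,\qquad |E_\Delta^{(k)}g(x)|=\delta^2|E_{\Delta'}^{(k)}g^{a,b}(\bar x)|.$$

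Next I would pass to the $\bar x$ variable in the ambient $\R^n$. The map $x\mapsto\bar x$ is affine with a block lower-triangular structure in which the block at degree $j$ is $\delta^j$ times an invertible matrix depending only on $a,b$; its Jacobian is a fixed power of $\delta$ that will appear equally on both sides of the claimed inequality and cancel. Under this map the ball $B_N$ becomes an ellipsoid $\mathcal{E}$ whose semi-axes have sizes $N\delta^j$, the smallest being $N\delta^k=N'$. The point of the hypothesis $\rho\le 1/k$ is precisely to make this smallest semi-axis at least $1$, so that $\mathcal{E}$ can genuinely be covered by a finitely overlapping family $\{B'\}$ of balls of radius $N'$ and $D_k(N',p)$ is meaningful. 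A direct computation from the definition of $w_{B_N}$ shows
$$w_{B_N}(x(\bar x))\lesssim \sum_{B'}w_{B'}(\bar x).$$

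On each $B'$, apply the definition of $D_k(N',p)$ to $g^{a,b}:[0,1]^2\to\C$:
$$\|E_{[0,1]^2}^{(k)}g^{a,b}\|_{L^p(w_{B'})}\le D_k(N',p)\Bigl(\sum_{\Delta'\subset[0,1]^2,\,l(\Delta')=(N')^{-1/k}}\|E_{\Delta'}^{(k)}g^{a,b}\|_{L^p(w_{B'})}^p\Bigr)^{1/p}.$$
Raise to the $p$-th power, sum over $B'$ using the weight comparison, then undo both changes of variables ($\bar x\to x$ and $\Delta'\to\Delta=\eta^{-1}(\Delta')$). The factors $\delta^{2p}$ from the two pointwise identities and the Jacobian of $x\mapsto\bar x$ appear identically on both sides and therefore cancel, leaving exactly the inequality \eqref{fe22}.

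The only nonroutine step is the weight bookkeeping, i.e.\ the pointwise comparison $w_{B_N}(x(\bar x))\lesssim \sum_{B'}w_{B'}(\bar x)$ between the pullback weight and a sum over a ball cover of the ellipsoid $\mathcal{E}$. This is a standard consequence of the rapid polynomial decay built into $w_{B_N}$ together with the fact that the affine map has the lower-triangular $\delta^j$-block structure described above, so that distances in $x$ translate into distances in $\bar x$ in a controlled anisotropic way. All other manipulations are algebra with Jacobians and scalings.
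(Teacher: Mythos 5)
Your overall scheme---rescale by the affine map, transport $B_N$ to the region with semi-axes $N\delta^j$, cover it by finitely overlapping balls of radius $N'=N\delta^k=N^{1-k\rho}$, apply the definition of $D_k(N',p)$ on each such ball, and undo the change of variables---is exactly the paper's. But the one step you single out as nonroutine is false as stated: the pointwise bound $w_{B_N}(x(\bar x))\lesssim\sum_{B'}w_{B'}(\bar x)$, with the balls $B'$ covering only the image $\mathcal{E}$ of $B_N$, cannot hold with a constant independent of $N$ and $\delta$. The pulled-back weight decays anisotropically: in a coordinate of degree $j$ it decays at scale $N\delta^j$, which for $j<k$ exceeds $N'$ by the factor $\delta^{j-k}$. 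Take $\bar x$ displaced from $\mathcal{E}$ by a distance $D\gg N\delta$ along a degree-one direction: then $w_{B_N}(x(\bar x))\sim (D/(N\delta))^{-100n}$, while every ball of the cover lies at distance $\gtrsim D$ from $\bar x$, so the sum over the cover is at most the number of balls (a fixed negative power of $\delta$) times $(D/(N\delta^k))^{-100n}$; the ratio of the two sides is therefore $\gtrsim\delta^{-100n(k-1)+O_k(1)}$, which blows up as $\delta\to 0$ for $k\ge 2$. So the first half of your weight bookkeeping (passing from the weighted left-hand side to the sum over the cover of $\mathcal{E}$) fails; only the reverse comparison $\sum_{B'}w_{B'}(\bar x)\lesssim w_{B_N}(x(\bar x))$, which you need in order to reassemble the right-hand side, is true.

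The gap is repairable, and the repair is what the paper does: first reduce to the inequality with the unweighted norm $\|E_R^{(k)}g\|_{L^p(B_N)}$ on the left (a standard reduction, covering $\R^n$ by translates of $B_N$ and using the rapid decay of $w_{B_N}$, while keeping $w_{B_N}$ on the right). After the change of variables the left-hand side becomes $\|E_{[0,1]^2}^{(k)}g^{a,b}\|_{L^p(C_N)}$, and only the true directions of the comparison are used, namely $1_{B_N}(x)\lesssim\bigl(\sum_{B'}w_{B'}\bigr)(Sx)\lesssim w_{B_N}(x)$. (Alternatively, one can keep the weight throughout but cover all of $\R^n$, not just $\mathcal{E}$, by $N'$-balls, exploiting that the pulled-back weight is essentially constant at scale $N'$ because $N'\le N\delta^j$ for every $j$; this also closes the argument.) With either correction, the rest of your computation---the pointwise identities for $E_R^{(k)}$ and $E_\Delta^{(k)}$, the matching of $\Delta$ with $\Delta'$ of side $(N')^{-1/k}$, and the cancellation of the Jacobian and $\delta^{2p}$ factors---goes through as you describe.
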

\begin{proof}
It suffices to prove that
$$\|E_R^{(k)}g\|_{L^p(B_N)}\lesssim D_k(N^{1-k\rho},p)(\sum_{\Delta\subset R\atop{l(\Delta)=N^{-1/k}}}\|E_\Delta^{(k)} g\|_{L^p(w_{B_N})}^p)^{1/p},$$
where the left hand side has no weight.
Note that $\bar{x}$ is the image of $x$ under a shear transformation $S$. Call $C_N=S(B_N)$ the image of the ball $B_N$  under this transformation. This is essentially a
$$\delta N\times\delta N\times\delta^2 N\times\ldots\times\delta^k N-\text{cylinder}.$$
Cover $C_N$ with a family $\F$ of balls $B_{\delta^kN}$ with $O(1)$ overlap, so that
\begin{equation}
\label{fejkhuhiofutguopfi59outo9568u}
1_{B_N}(x)\lesssim (\sum_{B_{\delta^kN}\in\F}w_{B_{\delta^kN}})(Sx)\lesssim w_{B_N}(x),\;\;x\in\R^n.
\end{equation}
After a change of variables, write
$$\|E_{R}^{(k)}g\|_{L^p(B_N)}=\delta^{2-\frac{k(k+1)(k+2)}{3p}}\|E_{[0,1]^2}^{(k)}g^{a,b}\|_{L^p(C_N)}.$$
The right hand side is bounded by
$$\delta^{2-\frac{k(k+1)(k+2)}{3p}}(\sum_{B_{\delta^kN}\in\F}\|E_{[0,1]^2}^{(k)}g^{a,b}\|_{L^p(B_{\delta^kN})}^p)^{1/p}\le$$
$$\delta^{2-\frac{k(k+1)(k+2)}{3p}}D_k(N^{1-k\rho},p)(\sum_{B_{\delta^kN}\in\F}\sum_{l(R')=\delta^{-1}N^{-\frac{1}k}}\|E_{R'}^{(k)}
g^{a,b}\|_{L^p(w_{B_{\delta^kN}})}^p)^{1/p}=$$
$$\delta^{2-\frac{k(k+1)(k+2)}{3p}}D_k(N^{1-k\rho},p)(\sum_{l(R')=\delta^{-1}N^{-\frac{1}k}}
\|E_{R'}^{(k)}g^{a,b}\|_{L^p(\sum_{B_{\delta^kN}\in\F}w_{B_{\delta^kN}})}^p)^{1/p}.$$
Changing back to the original variables and using \eqref{fejkhuhiofutguopfi59outo9568u}, we can dominate the above by
$$D_k(N^{1-k\rho},p)(\sum_{\Delta\subset R\atop{l(\Delta)=N^{-1/k}}}\|E_\Delta^{(k)} g\|_{L^p(w_{B_N})}^p)^{1/p},$$
as desired.
\end{proof}

The proof shows why one can not replace $l(\Delta)=N^{-1/k}$ with anything smaller  in \eqref{fe22}. The other application of rescaling will appear in the proof of Proposition \ref{fp3}.
\bigskip

\section{Linear versus multilinear decoupling}

Various implicit constants will be allowed to depend on the parameter $k$, but we will not record this dependence.
\bigskip

 For $2\le p<\infty$ and $N\ge 1$,  recall that $D_k(N,p)$ is the smallest constant such that the decoupling
$$\|E_{[0,1]^2}^{(k)}g\|_{L^p(w_{B_N})}\le D_k(N,p)(\sum_{l(\Delta)=N^{-1/k}}\|E_{\Delta}^{(k)}g\|_{L^p(w_{B_N})}^p)^{1/p}$$
holds true for all $g:[0,1]^2\to\C$ and all balls $B_N$ of radius $N$ in $\R^n=\R^{\frac{k(k+3)}{2}}$.
\medskip

We now introduce a family of multilinear versions of $D_k(N,p) $. Recall the definition of $\Lambda=\Lambda_k$ from Theorem \ref{tfek3}. Given  $N\ge K\ge 1$, let $D_{k,K}(N,p)$ be the smallest constant such that the inequality
$$\||\prod_{i=1}^{\Lambda K}E_{R_i}^{(k)}g_i|^{\frac1{\Lambda K}}\|_{L^p(w_{B_N})}\le D_{k,K}(N,p)(\prod_{i=1}^{\Lambda K}\sum_{l(\Delta)=N^{-1/k}}\|E_{\Delta}^{(k)}g_i\|_{L^p(w_{B_N})}^p)^{\frac1{\Lambda Kp}}$$
holds true for all distinct squares $R_1,\ldots,R_{\Lambda K}\in\Col_K$, all $g_i:R_i\to\C$  and all balls $B_N\subset\R^n$ with radius $N$.
\bigskip

Theorem \ref{tfek3} shows that for fixed $K$, any  distinct squares $R_1,\ldots,R_{\Lambda K}\in\Col_K$ are transverse for $\M_{2,k}$ in a uniform way. It is thus expected that $D_{k,K}(N,p)$ will be easier to control than $D_k(N,p)$. And indeed, as  will be seen in the proof of Corollary \ref{tiughibmuh y6m894564y078507}, the expected bound for $D_{k,K}(N,p)$ in the range $2\le p\le n$ is an immediate consequence of the multilinear  Theorem \ref{tfek4}.

H\"older's inequality shows that $D_{k,K}(N,p)\le D_k(N,p)$. The rest of the section will be devoted to proving some sort of  reverse inequality. This will  follow from a  variant of the Bourgain--Guth induction on scales in \cite{BG}. More precisely, we prove the following result.

\begin{theorem}
\label{ft2}
For each $K\ge 2$ and $p\ge 2$ there exists $\Omega_{K,p}>0$ and $\beta(K,p)>0$  with
$$\lim_{K\to \infty}\beta(K,p)=0,\;\;\text{ for each }p,$$
such that for each $N\ge K$

$$D_k(N,p)\le $$
 \begin{equation}
\label{fe31}\le N^{\beta(K,p)+\frac2k(\frac12-\frac1p)}+\Omega_{K,p}\log_K N\max_{1\le M\le N}\left[(\frac{M}{N})^{\frac2k(\frac1p-\frac12)}D_{k,K}(M,p)\right].
\end{equation}
\end{theorem}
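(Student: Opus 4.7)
The plan is a Bourgain--Guth induction on scales, passing from the linear decoupling constant $D_k(N,p)$ to the multilinear variant $D_{k,K}$ through a pointwise broad/narrow dichotomy at the spatial scale $K$, combined with the rescaling of Proposition~\ref{rvty090-0w=qw} to iterate the narrow contribution.

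\textbf{Broad/narrow dichotomy at scale $K$.} Partition $[0,1]^2$ into the $K^2$ dyadic squares $R \in \Col_K$. For each $x$ let $R^*(x)$ maximize $R \mapsto |E_R^{(k)}g(x)|$, and call $x$ \emph{broad} when at least $\Lambda K$ squares $R \in \Col_K$ satisfy $|E_R^{(k)}g(x)| \ge K^{-100}|E_{R^*(x)}^{(k)}g(x)|$, and \emph{narrow} otherwise. Theorem~\ref{tfek3} guarantees that any $\Lambda K$ distinct squares in $\Col_K$ are $\nu_K$-transverse, so on the broad set
\[
|E_{[0,1]^2}^{(k)} g(x)| \lesssim K^{O(1)}\max_{(R_1,\dots,R_{\Lambda K}) \text{ transverse}}\Bigl(\prod_{i=1}^{\Lambda K}|E_{R_i}^{(k)}g(x)|\Bigr)^{1/(\Lambda K)},
\]
while on the narrow set $|E_{[0,1]^2}^{(k)} g(x)| \le \Lambda K \max_{R\in\Col_K}|E_R^{(k)}g(x)|$.

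\textbf{Single-step recursion.} Take the $L^p(w_{B_N})$-norm. Distribute $\|\max\|_{L^p} \le (\sum \|\cdot\|_{L^p}^p)^{1/p}$ over the finitely many transverse tuples in the broad regime, invoke the definition of $D_{k,K}(N,p)$ and apply AM--GM to collapse the product over $R_i$ into the full sum over $\Delta$'s. In the narrow regime, use $\max_R \le (\sum_R |\cdot|^p)^{1/p}$ and apply Proposition~\ref{rvty090-0w=qw} with $\rho = \log_N K$ (so that the rescaled frequency scale is $N^{1-k\rho}=N/K^k$) to each $R\in\Col_K$. Collecting yields the one-step recursive inequality
\[
D_k(N,p) \le C_K D_{k,K}(N,p) + \Lambda K\cdot D_k(N/K^k,p).
\]

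\textbf{Induction on $N$.} Set $\alpha := \tfrac{2}{k}(\tfrac12-\tfrac1p)$. Prove the theorem's estimate by induction on $N$ as follows: feed the inductive hypothesis for $D_k(N/K^k,p)$ into the recursion and account for the normalizing weight $(M/N)^{-\alpha}$. The multilinear term at scale $N/K^k$ telescopes against the one at scale $N$, producing the $\log_K N$ prefactor in front of $\max_M[(M/N)^{-\alpha}D_{k,K}(M,p)]$; simultaneously the narrow loss $\Lambda K$ combines with the rescaling gain $K^{-k\alpha}$ to give, after iteration through a bounded residual scale, the additive error $N^{\beta(K,p)+\alpha}$.

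\textbf{Main obstacle.} The delicate point is arranging the closure so that $\beta(K,p)>0$ can be taken with $\lim_{K\to\infty}\beta(K,p)=0$. The per-step narrow loss after normalization is $\Lambda K^{1-k\alpha}=\Lambda K^{2/p}$, which is genuinely larger than $1$ for $p<\infty$; naively iterating $\log_K N/k$ times produces a polynomial-in-$N$ error that does not tend to zero in the $K\to\infty$ limit. Closing the induction with the sub-polynomial error $N^{\beta(K,p)}$ and a $\log_K N$-prefactor (rather than a polynomial-in-$N$ factor) in front of the multilinear maximum requires careful self-referential bookkeeping, absorbing the geometric series into the inductive target at each stage, together with the fact that the $\le\Lambda K-1$ dominant squares in the narrow case are concentrated near the zero set of a polynomial of degree $\le 2k-2$ (cf.\ Definition~\ref{dfek1}), which lets one reclaim the missing $K^{o(1)}$ factor by a lower-dimensional argument in the spirit of \cite{BD4}.
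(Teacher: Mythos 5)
Your broad/narrow dichotomy at scale $K$ and the iteration via rescaling follow the same Bourgain--Guth skeleton as the paper (Propositions \ref{fp2} and \ref{fp3}), but there is a genuine gap at precisely the step you flag yourself: the narrow case. Bounding the narrow contribution by $\Lambda K\max_{R}|E_R^{(k)}g|$ and then using $\max_R\le(\sum_R|\cdot|^p)^{1/p}$ loses a full factor $\Lambda K$ per step; after normalizing by the per-step gain $K^{2(\frac12-\frac1p)}=K^{1-\frac2p}$ in the target, the loss is $\Lambda K^{2/p}$, and iterating $\sim\frac1k\log_KN$ times produces a power of $N$ whose exponent does not vanish as $K\to\infty$, so the recursion $D_k(N,p)\le C_KD_{k,K}(N,p)+\Lambda K\,D_k(NK^{-k},p)$ cannot be closed in the stated form. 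The rescue you sketch --- self-referential bookkeeping plus a lower-dimensional argument exploiting that the dominant squares cluster near the zero set of a degree $\le 2k-2$ polynomial, ``reclaiming a $K^{o(1)}$ factor'' --- is not a proof and misidentifies the size of the defect: what must be reclaimed is the definite power $K^{2/p}$, and the narrow hypothesis (fewer than $\Lambda K$ significant squares) gives no clustering information about their location, nor is any needed beyond their number.

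The paper closes exactly this gap with an elementary but sharp ingredient absent from your argument: the ``trivial decoupling'' of Lemma \ref{fl1}. On a ball $B_K$ of radius $K$ the functions $\eta_{B_K}E_{R}^{(k)}g$, $R\in\Col_K$, have Fourier supports with bounded overlap, so interpolating $L^2$ almost orthogonality with the trivial $L^\infty$ bound gives, for a sum of $M$ such squares, the constant $C_pM^{1-\frac2p}$ with $C_p$ independent of $K$. In the narrow case there are at most $\Lambda K$ significant squares --- this is where the choice $\Lambda=C_{2k-2}+1$ via Wongkew's theorem \cite{Wo} enters through Theorem \ref{tfek3} --- so the narrow loss per step is $C_p(\Lambda K)^{1-\frac2p}$, i.e. $C_pK^{p-2}$ at the level of $p$-th powers (Proposition \ref{fp2}), which matches the per-step gain exactly. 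Iterating $l$ times with $K^{l}=N^{1/k}$ then accrues only $C_p^{l/p}=N^{\frac1{kp}\log_KC_p}$, giving $\beta(K,p)=\frac1{kp}\log_KC_p\to0$ as $K\to\infty$, while the multilinear terms collected along the way (each carrying the harmless one-time constant $K^{10\Lambda Kp}$) sum to the $\Omega_{K,p}\log_KN\max_{1\le M\le N}\bigl[(\frac MN)^{\frac2k(\frac1p-\frac12)}D_{k,K}(M,p)\bigr]$ term. Without this $\ell^2$-orthogonality input, or some substitute producing a per-step narrow constant $O_p(K^{1-\frac2p})$ with a $K$-independent implicit constant, your induction does not yield the theorem.
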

\bigskip

Recall that we expect to have for $2\le p\le\frac{k(k+1)(k+2)}{3}$
$$D_k(N,p)\lesssim_\epsilon  N^{\frac2k(\frac12-\frac1p)+\epsilon}$$  and $$D_{k,K}(M,p)\lesssim_{\epsilon,K}  M^{\frac2k(\frac12-\frac1p)+\epsilon}.$$
Thus, if the second inequality holds, then the first one will hold, too, by invoking \eqref{fe31} and choosing $K$ large enough so that $\beta(K,p)$ is as small as desired. This relationship between $D_k(N,p)$ and $D_{k,K}(N,p)$ will be exploited in Section \ref{last}, via a  delicate bootstrapping argument.
\bigskip

The first step in the proof of Theorem \ref{ft2} is the following ``trivial" decoupling from \cite{BD5}, that we will use  to bound the non transverse contribution in the Bourgain--Guth induction on scales. For completeness, we reproduce the proof from \cite{BD5}.
\begin{lemma}
\label{fl1}
Let $R_1,\ldots,R_M$ be pairwise disjoint squares in $[0,1]^2$ with side length $K^{-1}$. Then for each $2\le p\le \infty$
$$ \|\sum_iE_{R_i}^{(k)}g\|_{L^p(w_{B_K})}\lesssim_p M^{1-\frac2p}(\sum_i\|E_{R_i}^{(k)}g\|_{L^p(w_{B_K})}^p)^{1/p}.$$
\end{lemma}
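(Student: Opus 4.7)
My plan is to interpolate between the $L^2$ endpoint (where almost orthogonality gives no loss in $M$) and the $L^\infty$ endpoint (where the triangle inequality costs a full factor of $M$). Setting $f_i:=E_{R_i}^{(k)}g$, I will start from the log-convexity inequality
\[
\|\textstyle\sum_i f_i\|_{L^p(w_{B_K})}\le\|\textstyle\sum_i f_i\|_{L^2(w_{B_K})}^{2/p}\;\|\textstyle\sum_i f_i\|_{L^\infty(w_{B_K})}^{1-2/p}.
\]
At $L^\infty$ the triangle inequality gives $\sum_i\|f_i\|_{L^\infty(w_{B_K})}$. At $L^2$, the Fourier transforms of the $f_i$ sit on the pieces $\M_{k,R_i}$, whose $\frac1K$-thickenings have bounded multiplicity because the squares $R_i$ are pairwise disjoint in $[0,1]^2$; after a Schwartz localization adapted to $B_K$ (with Fourier side compactly supported at scale $\ll 1/K$), Plancherel yields $\|\sum_i f_i\|_{L^2(w_{B_K})}\lesssim (\sum_i\|f_i\|_{L^2(w_{B_K})}^2)^{1/2}$.

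Next I will invoke the locally constant property. Since $R_i$ has side length $\frac1K$ and $\Psi$ is smooth on $[0,1]^2$, the Fourier support of $f_i$ has diameter $O(1/K)$, so Bernstein's inequality gives $\|f_i\|_{L^q(w_{B_K})}\sim K^{n(1/q-1/p)}\|f_i\|_{L^p(w_{B_K})}$ for all $q\in[2,\infty]$. Setting $c_i:=K^{-n/p}\|f_i\|_{L^p(w_{B_K})}$, the two endpoint bounds rewrite as $\|\sum_i f_i\|_{L^2(w_{B_K})}\lesssim K^{n/2}(\sum_i c_i^2)^{1/2}$ and $\|\sum_i f_i\|_{L^\infty(w_{B_K})}\lesssim \sum_i c_i$. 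Feeding these into the log-convexity bound reduces the lemma to the numerical inequality
\[
\Bigl(\textstyle\sum_{i=1}^M c_i^2\Bigr)^{1/p}\Bigl(\textstyle\sum_{i=1}^M c_i\Bigr)^{1-2/p}\le M^{1-2/p}\Bigl(\textstyle\sum_{i=1}^M c_i^p\Bigr)^{1/p},
\]
equivalently $(\sum_i c_i^2)(\sum_i c_i)^{p-2}\le M^{p-2}\sum_i c_i^p$. I will verify this by applying H\"older twice: with exponents $(p/2,p/(p-2))$ to $\sum c_i^2$ (giving $\sum c_i^2\le M^{(p-2)/p}(\sum c_i^p)^{2/p}$), and with exponents $(p,p/(p-1))$ to $\sum c_i$ (giving $\sum c_i\le M^{(p-1)/p}(\sum c_i^p)^{1/p}$); multiplying, the $M$-exponents combine to $(p-2)/p+(p-1)(p-2)/p=p-2$ and the $c_i^p$-exponents to $1$, exactly as needed.

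The only delicate point is the $L^2$ step: although the $R_i$ are disjoint, they may share boundaries, so the $\frac1K$-thickenings of $\M_{k,R_i}$ in $\R^n$ can overlap along shared edges. The overlap multiplicity remains $O(1)$ (inherited from the bounded overlap of the closed squares $R_i$ in $[0,1]^2$), which is what Plancherel needs. The weight $w_{B_K}$ is absorbed by a standard pigeonholing over the annular level sets of $w_{B_K}$ or by comparing $w_{B_K}$ with the square of a Schwartz bump whose Fourier transform has rapid decay; both are routine in the decoupling literature. Every other step is an elementary application of H\"older.
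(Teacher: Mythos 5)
Your overall strategy --- interpolating between $L^2$ almost orthogonality and the $L^\infty$ triangle inequality, and paying $M^{1-\frac2p}$ via H\"older in the index $i$ --- is the same mechanism as the paper's proof (which packages it as vector-valued interpolation of smooth Fourier multipliers $T_i$ adapted to the boundedly overlapping Fourier supports of $\eta_{B_K}E^{(k)}_{R_i}g$, on unweighted global $L^p$), and your numerical H\"older computation is correct. However, one step fails as stated: the claimed equivalence $\|f_i\|_{L^q(w_{B_K})}\sim K^{n(1/q-1/p)}\|f_i\|_{L^p(w_{B_K})}$ is false at $q=\infty$ in the direction you need. With the paper's convention $\|f\|_{L^p(v)}=(\int|f|^pv)^{1/p}$, the space $L^\infty(w_{B_K})$ is just the global sup (the weight is strictly positive everywhere), and a function whose Fourier transform lies in a ball of radius $O(1/K)$ can attain its sup far from $B_K$: for a wave packet $f(x)=e(x\cdot\xi_0)\phi\bigl(\frac{x-x_0}{K}\bigr)$ with $|x_0-c(B_K)|=D\gg K$ one has $\|f\|_{L^\infty}\sim 1$ while $K^{-n/p}\|f\|_{L^p(w_{B_K})}\sim (D/K)^{-100n/p}$, which is arbitrarily small. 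So the bound $\|\sum_i f_i\|_{L^\infty(w_{B_K})}\lesssim\sum_i c_i$, on which your log-convexity step rests, does not follow; Bernstein only gives the one-sided, localized statement $\sup_{B_K}|f_i|\lesssim (K^{-n}\int|f_i|^pw_{B_K})^{1/p}$. (The $L^2$ comparison you need is, by contrast, harmless: it is plain H\"older against the finite measure $w_{B_K}\,dx$, no Fourier support required.)

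The fix is standard and keeps your architecture intact: run the log-convexity on the ball, $\|F\|_{L^p(B_K)}\le\|F\|_{L^2(w_{B_K})}^{2/p}\|F\|_{L^\infty(B_K)}^{1-2/p}$, use the localized Bernstein bound for the sup over $B_K$ together with the weighted $L^2$ almost orthogonality exactly as you describe; this yields the lemma with $1_{B_K}$ on the left and $w_{B_K}$ on the right, and the weighted left-hand side is then recovered by applying this estimate to a boundedly overlapping family of balls of radius $K$ covering $\R^n$ and summing against the rapidly decaying tails of $w_{B_K}$ (the same routine upgrade the paper performs implicitly when it passes from $\eta_{B_K}$ to the weight). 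With that correction your proof is complete; compared with the paper's route, you trade the multiplier interpolation for an explicit per-function Bernstein step, and that is precisely the point where the localization must be inserted.
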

\begin{proof}
The key observation is the fact that if $f_1,\ldots,f_M:\R^n\to\C$ are such that $\widehat{f_i}$ is supported on a ball $B_i$ and the dilated balls $(2B_i)_{i=1}^M$ are pairwise disjoint, then
\begin{equation}
\label{fe36}
\|f_1+\ldots+f_M\|_{L^p(\R^n)}\lesssim_p  M^{1-\frac2p}(\sum_i\|f_i\|_{L^p(\R^n)}^p)^{\frac1p}.
\end{equation}
In fact more is true. If $T_i$ is a smooth Fourier multiplier adapted to $2B_i$ and equal to 1 on $B_i$, then the inequality
$$\|T_1(f_1)+\ldots+T_M(f_M)\|_{L^p(\R^n)}\lesssim_p  M^{1-\frac2p}(\sum_i\|f_i\|_{L^p(\R^n)}^p)^{\frac1p}$$
for arbitrary $f_i\in L^p(\R^n)$
follows by interpolating the immediate $L^2$ and $L^\infty$ estimates. Inequality \eqref{fe36} is the best one can say in general, if no further assumption is made on the Fourier supports of $f_i$. Indeed, if $\widehat{f_i}=1_{B_i}$ with $B_i$ equidistant balls of radius one with collinear centers, then the reverse inequality will hold.

Let now $\eta_{B_K}$ be as in \eqref{fek15}. It suffices to note that the Fourier supports of the  functions $f_i=\eta_{B_K}E_{R_i}^{(k)}g$ have bounded overlap.
 \end{proof}
\bigskip

The key step in proving Theorem \ref{ft2}  is the following inequality.

\begin{proposition}
\label{fp2}
For $2\le p<\infty$ and $K\ge 2$ there is a constant $C_{p}$ independent of $K$ so that for each $g:[0,1]^2\to\C$ and $N\ge K\ge 1$ we have
$$\|E_{[0,1]^2}^{(k)}g\|_{L^p(w_{B_N})}^p\le $$$$ C_{p}K^{p-2}\sum_{R\in\Col_K}\|E_{R}^{(k)}g\|_{L^p(w_{B_N})}^p+C_{p}K^{10\Lambda Kp}D_{k,K}(N,p)^p\sum_{\Delta\in
\Col_{N^{\frac1k}}}\|E_{\Delta}^{(k)}g\|_{L^p(w_{B_N})}^p.$$
\end{proposition}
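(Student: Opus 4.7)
The plan is to run a Bourgain–Guth induction on scales. Tile $B_N$ by a finitely overlapping family of balls $B$ of radius $K$. For each $R\in\Col_K$ and each such $B$, the Fourier support of $E_R^{(k)}g$ lies in a $\sim 1/K$-neighborhood of a piece of $\M_{2,k}$, so $|E_R^{(k)}g|$ is essentially constant on $B$ up to Schwartz tails: $|E_R^{(k)}g(x)|\lesssim c_R(B):=\|E_R^{(k)}g\|_{L^\infty(w_B)}$ for $x\in B$. Set $M(B):=\max_{R\in\Col_K}c_R(B)$, fix a threshold parameter $C$ of order $\Lambda K$, and call $R$ \emph{significant on $B$} if $c_R(B)\ge K^{-C}M(B)$. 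For each $B$ we split into two cases according to whether there are at least $\Lambda K$ significant squares.

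\textbf{Transverse case.} Suppose $B$ admits $\Lambda K$ significant squares $R_1,\dots,R_{\Lambda K}$. For $x\in B$,
$$|E_{[0,1]^2}^{(k)}g(x)|\le K^2M(B)\le K^{C+2}\Bigl(\prod_{i=1}^{\Lambda K}c_{R_i}(B)\Bigr)^{1/(\Lambda K)}\lesssim K^{C+2}\Bigl(\prod_{i=1}^{\Lambda K}|E_{R_i}^{(k)}g(x)|\Bigr)^{1/(\Lambda K)}.$$
To absorb the $B$-dependence of the tuple, sum over all $\le K^{2\Lambda K}$ ordered $\Lambda K$-tuples of distinct squares in $\Col_K$. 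Raising to the $p$-th power, integrating against $w_{B_N}$, and applying the definition of $D_{k,K}(N,p)$ to each tuple gives an overall bound
$$\lesssim K^{(C+2)p+2\Lambda K}\,D_{k,K}(N,p)^p\sum_{\Delta\in\Col_{N^{1/k}}}\|E_\Delta^{(k)}g\|_{L^p(w_{B_N})}^p,$$
where we used that all $g_i=g$ so the product $\bigl(\prod_i\sum_\Delta\|E_\Delta^{(k)}g\|_{L^p(w_{B_N})}^p\bigr)^{1/\Lambda K}$ collapses to the single sum. For $C$ of order $\Lambda K$ this is at most the second term on the right of the claim.

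\textbf{Non-transverse case.} Now fewer than $\Lambda K$ squares are significant on $B$. Let $\S(B)$ be that collection, with $|\S(B)|\le \Lambda K$. On $B$,
$$\bigl|E_{[0,1]^2}^{(k)}g-\sum_{R\in\S(B)}E_R^{(k)}g\bigr|\le K^2\cdot K^{-C}M(B),$$
which is negligible for $C$ large. For the main term, apply Lemma \ref{fl1} with $M\le \Lambda K$:
$$\bigl\|\sum_{R\in\S(B)}E_R^{(k)}g\bigr\|_{L^p(w_B)}^p\lesssim (\Lambda K)^{p-2}\sum_{R\in\Col_K}\|E_R^{(k)}g\|_{L^p(w_B)}^p.$$
Summing over $B\subset B_N$ and using $\sum_B w_B\lesssim w_{B_N}$ produces the first term on the right of the claim with constant $\lesssim K^{p-2}$.

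The delicate point is calibrating the threshold $K^{-C}$: it must be loose enough that the tail $K^{2-C}M(B)$ in the non-transverse case is harmless, and tight enough that $K^{(C+2)p+2\Lambda K}$ from the transverse case stays below $K^{10\Lambda K p}$. Any $C$ of order $\Lambda K$ (for instance, $C=8\Lambda K$) balances both requirements once $p\ge 2$. The remaining routine step is the careful formalization of the "essentially constant on $B$" passage, done by working with weighted $L^\infty$ norms $\|\cdot\|_{L^\infty(w_B)}$ and replacing naive triangle-inequality pointwise bounds by their weighted analogues.
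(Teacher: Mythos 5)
Your proposal is correct and follows essentially the same Bourgain--Guth argument as the paper: decompose $B_N$ into balls of radius $K$, split each ball into a transverse case (at least $\Lambda K$ ``significant'' squares, bounded by a geometric mean, then summed over all $\le K^{2\Lambda K}$ tuples and estimated via $D_{k,K}(N,p)$) and a non-transverse case handled by Lemma \ref{fl1}, which produces the $K^{p-2}$ factor. The only cosmetic difference is your significance threshold $K^{-C}$ with $C\sim\Lambda K$ in place of the paper's $K^{-2}$ (the paper instead absorbs the insignificant tail into a $2c_{R^*,g}(B_K)$ term), and both treatments rely on the same locally-constant formalism from \cite{BG} that the paper also leaves implicit.
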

\bigskip

The exponent $10\Lambda Kp$ in $K^{10\Lambda Kp}$ is not important and could easily be improved, but the exponent $p-2$ in $K^{p-2}$ is sharp and will play a critical role in the rest of the argument.
\bigskip

\begin{proof}
Following the standard formalism from \cite{BG}, we may assume that $|E_{R}^{(k)}g(x)|$ is essentially constant on each ball $B_K$ of radius $K$, and  we will denote by $c_{R,g}(B_K)$ this value. Write for each $x$
\begin{equation}
\label{mvguioyt8pbu68myi906y80-9t}
E_{[0,1]^2}^{(k)}g(x)=\sum_{R\in\Col_K}E_{R}^{(k)}g(x).
\end{equation}
Fix $B_K$. Let $R^*\in \Col_K$ be a square which maximizes the value of $c_{R,g}(B_K)$. Let $\Col_{B_K}^{*}$ be those squares  $R\in \Col_K$ such that
$$c_{R,g}(B_K)\ge K^{-2}c_{R^{*},g}(B_K).$$

We distinguish two cases.
\medskip

First, if $\Col_{B_K}^{*}$ contains at least $\Lambda K$ squares $R_1,\ldots,R_{\Lambda K}$, using \eqref{mvguioyt8pbu68myi906y80-9t} and the triangle inequality we can write
$$|E_{[0,1]^2}^{(k)}g(x)|\le K^4(\prod_{i=1}^{\Lambda K}c_{R_i,g}(B_K))^{\frac1{\Lambda K}},\;\;x\in B_K.$$

Otherwise, if $\Col_{B_K}^{*}$ contains at most $\Lambda K$ squares, we can write using the triangle inequality
$$|E_{[0,1]^2}^{(k)}g(x)|\le 2c_{R^{*},g}(B_K)+|\sum_{R\in\Col_{B_K}^{*}}E_{R}^{(k)}g(x)|,\;\;x\in B_K.$$
Next, invoking Lemma \ref{fl1} we get
$$\|E_{[0,1]^2}^{(k)}g\|_{L^p(w_{B_K})}\lesssim_p\|E_{R^*}^{(k)}g\|_{L^p(w_{B_K})}+ (\Lambda K)^{1-\frac2{p}}(\sum_{R\in\Col_{B_K}^{*}}\|E_{R}^{(k)}g\|_{L^p(w_{B_K})}^p)^{1/p}\le $$
$$\lesssim_{p}  K^{1-\frac2{p}}(\sum_{R\in\Col_{K}}\|E_{R}^{(k)}g\|_{L^p(w_{B_K})}^p)^{1/p}.$$
To summarize, in either case we can write
$$\|E_{[0,1]^2}^{(k)}g\|_{L^p(w_{B_K})}\lesssim_{p} $$$$ K^4\max_{R_1,\ldots,R_{\Lambda K}}\|(\prod_{i=1}^{\Lambda K}|E_{R_i}^{(k)}g|)^{\frac1{\Lambda K}}\|_{L^p(w_{B_K})}+ K^{1-\frac2{p}}(\sum_{R\in\Col_K}\|E_{R}^{(k)}g\|_{L^p(w_{B_K})}^p)^{1/p}\le$$
$$K^4(\sum_{R_1,\ldots,R_{\Lambda K}}\|(\prod_{i=1}^{\Lambda K}|E_{R_i}^{(k)}g|)^{\frac1{\Lambda K}}\|_{L^p(w_{B_K})}^p)^{1/p}+ K^{1-\frac2{p}}(\sum_{R\in\Col_K}\|E_{R}^{(k)}g\|_{L^p(w_{B_K})}^p)^{1/p}.$$
Raising to the power $p$  and summing over $B_K$ in a  finitely overlapping cover of $B_N$, leads to the desired conclusion.
\end{proof}
\bigskip

\bigskip

Using rescaling as in the proof of Proposition \ref{rvty090-0w=qw},  the result in Proposition \ref{fp2} leads to the following general result.
\begin{proposition}
\label{fp3}
Let $R\subset[0,1]^2$ be a square with side length $\delta$. Then for each $2\le p<\infty$, $g:R\to\C$, $K\ge 1$ and $N>\delta^{-k}K$ we have
$$\|E_{R}^{(k)}g\|_{L^p(w_{B_N})}^p\le $$$$ C_{p}K^{p-2}\sum_{R'\subset R\atop{R'\in\Col_{\frac{K}\delta}}}\|E_{R'}^{(k)}g\|_{L^p(w_{B_N})}^p+C_{p}K^{10\Lambda Kp}D_{k,K}(N\delta^{k},p)^p\sum_{\Delta\subset R\atop{\Delta\in\Col_{N^{\frac1k}}}}\|E_{\Delta}^{(k)}g\|_{L^p(w_{B_N})}^p,$$
where $C_{p}$ is the constant from Proposition \ref{fp2}.
\end{proposition}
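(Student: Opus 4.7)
The plan is to reduce the inequality to Proposition \ref{fp2} applied at the rescaled scale $M:=N\delta^k$, by mimicking the rescaling mechanism already developed in the proof of Proposition \ref{rvty090-0w=qw}. Write $R=[a,a+\delta]\times[b,b+\delta]$ and let $\eta(t,s)=((t-a)/\delta,(s-b)/\delta)$. As recorded in Section 7, there is an affine map $T:\R^n\to\R^n$ (a shear followed by an anisotropic dilation with factor $\delta^j$ on the $(j+1)$ directions associated with degree-$j$ monomials) and a rescaled function $g^{a,b}$ such that $|E_R^{(k)}g(x)|=\delta^2|E_{[0,1]^2}^{(k)}g^{a,b}(Tx)|$. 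The image $C_N:=T(B_N)$ is an anisotropic cylinder whose shortest dimension has length $M=\delta^k N$, and the hypothesis $N>\delta^{-k}K$ guarantees $M>K$, so Proposition \ref{fp2} is available at scale $M$.

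First, I would cover $C_N$ with a finitely overlapping family $\F$ of balls $B_M$ of radius $M$, chosen as in the proof of Proposition \ref{rvty090-0w=qw} so that $1_{B_N}(x)\lesssim\bigl(\sum_{B_M\in\F}w_{B_M}\bigr)(Tx)\lesssim w_{B_N}(x)$. On each $B_M$ I apply Proposition \ref{fp2} to $g^{a,b}$ at scale $M$, obtaining
\begin{align*}
\|E_{[0,1]^2}^{(k)}g^{a,b}\|_{L^p(w_{B_M})}^p
&\le C_pK^{p-2}\sum_{R''\in\Col_K}\|E_{R''}^{(k)}g^{a,b}\|_{L^p(w_{B_M})}^p\\
&\quad +C_pK^{10\Lambda Kp}\,D_{k,K}(M,p)^p\sum_{\Delta''\in\Col_{M^{1/k}}}\|E_{\Delta''}^{(k)}g^{a,b}\|_{L^p(w_{B_M})}^p,
\end{align*}
and then sum over $B_M\in\F$. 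Pulling back via $\eta^{-1}$, the squares $R''\in\Col_K$ on $[0,1]^2$ become squares of side length $\delta/K$ in $R$, i.e.\ elements of $\Col_{K/\delta}$ with $R'\subset R$, while the squares $\Delta''\in\Col_{M^{1/k}}$ become squares of side $\delta\cdot M^{-1/k}=N^{-1/k}$, i.e.\ elements of $\Col_{N^{1/k}}$ with $\Delta\subset R$. The pointwise scaling identity applied on both sides, together with the Jacobian $|\det T|=\delta^{k(k+1)(k+2)/3}$, generates the same $\delta$-powers on the left and right which then cancel, and the weight $\sum_{B_M\in\F}w_{B_M}$ pulls back to a weight controlled by $w_{B_N}$, which is exactly the weight appearing in the statement.

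I do not anticipate any genuine obstacle: the argument is the direct analogue of Proposition \ref{rvty090-0w=qw}, with the single definition-based step there replaced by an invocation of Proposition \ref{fp2} at scale $M$. The only point that deserves a careful double-check is the weight bookkeeping (matching $\sum_{B_M}w_{B_M}$ to $w_{B_N}$ after the change of variables) and the verification that the constants $C_p$, $K^{p-2}$, $K^{10\Lambda Kp}$ and $D_{k,K}(M,p)^p$ are scale-invariant, so that they pass through the rescaling unchanged. Both of these were already handled in the proof of Proposition \ref{rvty090-0w=qw} and require no new ideas.
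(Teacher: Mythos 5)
Your proposal is correct and follows essentially the same route as the paper, which derives Proposition \ref{fp3} precisely by applying the parabolic rescaling mechanism of Proposition \ref{rvty090-0w=qw} to Proposition \ref{fp2} at the scale $M=N\delta^k$ (the hypothesis $N>\delta^{-k}K$ guaranteeing $M>K$). Your bookkeeping of the pulled-back squares in $\Col_{K/\delta}$ and $\Col_{N^{1/k}}$ and of the weights matches the intended argument.
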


\bigskip

We are now in position to prove Theorem \ref{ft2}. By iterating Proposition \ref{fp3} $l$ times we get
$$\|E_{[0,1]^2}^{(k)}g\|_{L^p(w_{B_N})}^p\le (C_{p}K^{p-2})^l\sum_{R\in\Col_{K^n}}\|E_{R}^{(k)}g\|_{L^p(w_{B_N})}^p+$$$$+C_{p}K^{10\Lambda Kp}\sum_{\Delta\in\Col_{N^{1/k}}}
\|E_{\Delta}^{(k)}g\|_{L^p(w_{B_N})}^p\sum_{j=0}^{l-1}(C_{p}K^{p-2})^{j}D_{k,K}(NK^{-kj},p)^p.$$
Applying this with $n$ such that $K^l=N^{\frac1k}$ we get
$$\|E_{[0,1]^2}^{(k)}g\|_{L^p(w_{B_N})}\le $$$$ N^{\frac1{kp}\log_{K}C_{p}}N^{\frac2k(\frac12-\frac1p)}(\sum_{\Delta\in\Col_{N^{1/k}}}
\|E_{\Delta}^{(k)}g\|_{L^p(w_{B_N})}^p)^{1/p}+
$$$$C_{p}^{\frac1p}K^{10 K\Lambda}\sum_{j=0}^{n-1}(\frac{NK^{-kj}}{N})^{\frac2k(\frac1p-\frac12)}
D_{k,K}(NK^{-kj},p)(\sum_{\Delta\in\Col_{N^{1/k}}}\|E_{\Delta}^{(k)}g\|_{L^p(w_{B_N})}^p)^{1/p}.
$$

The proof of Theorem \ref{ft2} is now complete, by taking $$\beta(K,p)=\frac1{kp}\log_{K}C_{p}$$
and
$$\Omega_{K,p}=\frac1kC_{p}^{1/p}K^{10K\Lambda}.$$

\bigskip
Let us now see a rather immediate application of the technology we have developed so far. Recall that $n=\frac{k(k+3)}{2}$.
\begin{corollary}
\label{tiughibmuh y6m894564y078507}
If Conjecture \ref{ckconjh} holds true for some $k\ge 2$ then
\begin{equation}
\label{ji nvhfy vguitgurtgupytooihnopyin9pnyiup97iui9076}
D_{k}(N,p)\lesssim_{\epsilon}N^{\frac2k(\frac12-\frac1{p})+\epsilon}
\end{equation}
for each $2\le p\le n$.
\end{corollary}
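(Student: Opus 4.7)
I would prove the conjectured bound in three steps: (1) bound the multilinear constant $D_{k,K}(M,n)$ at the critical exponent $p = n$; (2) transfer this to the linear $D_k(N,n)$ via Theorem \ref{ft2}; (3) interpolate with the trivial $L^2$ bound to cover all $2 \le p \le n$.

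The heart of the argument is Step 1. At $p = n$ the exponent $\kappa_n = 0$, so Corollary \ref{gjityiophjytophpotigirti0-we=fdcwee=w=} collapses into a clean one-scale $L^n$ square-function decoupling, passing from side $N^{-1/2}$ to side $N^{-1}$ inside any ball of radius $\ge N$. I would iterate this along the dyadic-squaring chain $\delta_j = \delta_0^{2^j}$, starting from a constant $\delta_0 \ge K^{-1}$ and stopping once $\delta_l = M^{-1/k}$. The parameters $N_j = \delta_0^{-2^j}$ all satisfy $N_j \le M^{1/k} \le M$, so every application fits inside $B_M$; the chain has length $O(\log\log M)$ and the losses telescope as $\prod_{j=1}^l N_j^\epsilon \lesssim M^{2\epsilon/k}$, which is absorbed into $M^\epsilon$. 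Since $\delta_0 \ge K^{-1} = l(R_i)$, at the first step each $R_i$ lies inside a single $\tau$ of side $\delta_0$ and the initial square function collapses to $|E_{R_i}^{(k)} g_i|^2$. The chain therefore produces
\[
\left\| \left| \prod_{i=1}^{\Lambda K} E_{R_i}^{(k)} g_i \right|^{1/\Lambda K} \right\|_{L^n(w_{B_M})} \lesssim_{K,\epsilon} M^\epsilon \, \left\| \left( \prod_i \sum_{l(\Delta) = M^{-1/k}} |E_\Delta^{(k)} g_i|^2 \right)^{1/(2\Lambda K)} \right\|_{L^n(w_{B_M})}.
\]
Minkowski's integral inequality followed by the embedding $l^2 \hookrightarrow l^n$ (costing $(\#\Delta)^{1/2 - 1/n} \lesssim M^{\frac{2}{k}(\frac{1}{2} - \frac{1}{n})}$) converts the right-hand side into the form appearing in the definition of $D_{k,K}$, giving $D_{k,K}(M, n) \lesssim_{K, \epsilon} M^{\frac{2}{k}(\frac{1}{2} - \frac{1}{n}) + \epsilon}$.

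For Step 2, I feed this into Theorem \ref{ft2} at $p = n$ and pick $K$ large enough so that $\beta(K, n) < \epsilon$. The maximum term in \eqref{fe31} satisfies $(M/N)^{\frac{2}{k}(\frac{1}{n} - \frac{1}{2})} D_{k,K}(M, n) \lesssim M^\epsilon N^{\frac{2}{k}(\frac{1}{2} - \frac{1}{n})}$ uniformly in $M \le N$, and the $\log_K N$ factor is absorbed into $N^\epsilon$, producing $D_k(N, n) \lesssim_\epsilon N^{\frac{2}{k}(\frac{1}{2} - \frac{1}{n}) + \epsilon}$. For Step 3, the linear operator $T: (g_\Delta) \mapsto \sum_\Delta E_\Delta^{(k)} g_\Delta$, viewed as a map $l^p(L^p) \to L^p(\R^n)$, has operator norm exactly $D_k(N, p)$; Riesz--Thorin interpolation between the $p = n$ bound just obtained and the trivial $L^2$ bound $D_k(N, 2) \lesssim 1$ (from Plancherel-type almost-orthogonality of $\{E_\Delta^{(k)} g\}_\Delta$ against $w_{B_N}$) then gives $D_k(N, p) \lesssim N^{\frac{2}{k}(\frac{1}{2} - \frac{1}{p}) + \epsilon}$ for all $2 \le p \le n$, since $\theta(\frac{1}{2} - \frac{1}{n}) = \frac{1}{2} - \frac{1}{p}$ whenever $\frac{1}{p} = \frac{1-\theta}{2} + \frac{\theta}{n}$.

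The main technical difficulty lies in Step 1: arranging the iteration of Corollary \ref{gjityiophjytophpotigirti0-we=fdcwee=w=} so that every ball-radius hypothesis $R \ge N_j$ is honored inside the fixed ball $B_M$, and controlling the accumulated $\epsilon$-losses via the geometric sum. The passage from square function to the $L^n$-decoupling form defining $D_{k,K}$ is where the conjectured exponent $\frac{2}{k}(\frac{1}{2} - \frac{1}{n})$ arises geometrically, through the count $\#\{\Delta \subset R_i : l(\Delta) = M^{-1/k}\} \sim M^{2/k}$.
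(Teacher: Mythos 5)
Your Steps 1 and 2 are essentially sound, and Step 1 is a mildly different route from the paper's: you iterate Corollary \ref{gjityiophjytophpotigirti0-we=fdcwee=w=} at $p=n$ (where $\kappa_n=0$ kills the second factor) down the squaring chain, whereas the paper applies Theorem \ref{tfek4} once, directly decoupling into caps of side $N^{-1}$ on a ball of radius $N$ (using that $E_\Delta^{(k)}g_i$ is essentially constant on $B_N$), and then re-sums over balls $B_N$ inside $B_{N^k}$ to read off $D_{k,K}(M,p)$. Both give $D_{k,K}(M,n)\lesssim_{K,\epsilon}M^{\frac2k(\frac12-\frac1n)+\epsilon}$, and feeding this into Theorem \ref{ft2} with $K$ large indeed yields $D_k(N,n)\lesssim_\epsilon N^{\frac2k(\frac12-\frac1n)+\epsilon}$.

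The genuine gap is Step 3. First, Riesz--Thorin does not apply as you state it: $D_k(N,p)$ is the norm of the summation map restricted to the subspace of tuples $(F_\Delta)$ whose Fourier supports lie in the curved cap regions $\A_{\Delta,\frac1N}$, and complex interpolation does not pass to subspaces with induced norms (on the full space $l^p(L^p)$ the summation operator only satisfies the trivial $(\#\Delta)^{1-\frac1p}$-type bound). Second, and decisively, this is not a fixable technicality for $k\ge3$: as the paper points out at the start of Section \ref{last}, no interpolation argument between values of $p$ for $D_k(N,\cdot)$ is known when $k\ge 3$, precisely because over a cap of side $N^{-1/k}$ the manifold deviates from its tangent plane by $\sim N^{-2/k}\gg N^{-1}$, so the $N^{-1/k}$-caps thickened to $\frac1N$ are curved regions rather than boxes, and the wave-packet interpolation of \cite{BD3} only applies when $k=2$. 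Hence your argument covers $2\le p<n$ only for $k=2$, and in particular misses the case $k=3$ needed for Theorem \ref{tfek6}. The repair is to interpolate before passing to the linear constant, at the finest scale where the regions are honest boxes: interpolate the multilinear decoupling into $N^{-1}$-caps (your Step 1 output, or the paper's one-shot version) with the trivial multilinear $L^2$ decoupling into $N^{-1}$-caps, obtaining $D_{k,K}(M,p)\lesssim_{\epsilon,K,p}M^{\frac2k(\frac12-\frac1p)+\epsilon}$ for every $2\le p\le n$, and then run your Step 2 (Theorem \ref{ft2}) separately for each such $p$; this is exactly how the paper concludes.
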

\begin{proof}
Using Theorem \ref{tfek4} and the fact that $E_\Delta^{(k)} g_i$ is essentially constant on $B_N$, we easily get that
$$\|(\prod_{i=1}^{\Lambda K}E_{R_i}^{(k)}g_i)^{\frac1{\Lambda K}}\|_{L^n(w_{B_N})}\lesssim_{\epsilon,K}N^{2(\frac12-\frac1n)+\epsilon}( \prod_{i=1}^{\Lambda K} (\sum_{l(\Delta)=N^{-1}}\|E_{\Delta}^{(k)}g_i\|_{L^n(w_{B_N})}^n)^{1/n})^{\frac1{\Lambda K}}$$
holds true for all pairwise distinct $R_i\in\Col_K$, each  $g_i:R_i\to\C$ and all balls $B_N$ or radius $N$ in $\R^n$.  This is a multilinear decoupling into smaller squares with $l(\Delta)\sim N^{-1}$. Interpolating with the trivial $L^2$ result we get
$$\|(\prod_{i=1}^{\Lambda K}E_{R_i}^{(k)}g_i)^{\frac1{\Lambda K}}\|_{L^p(w_{B_N})}\lesssim_{\epsilon,K,p}N^{2(\frac12-\frac1p)+\epsilon}( \prod_{i=1}^{\Lambda K} (\sum_{l(\Delta)=N^{-1}}\|E_{\Delta}^{(k)}g_i\|_{L^p(w_{B_N})}^p)^{1/p})^{\frac1{\Lambda K}}$$
for each $2\le p\le n$.

By summing up over balls $B_N$ we also get the inequality
$$\|(\prod_{i=1}^{\Lambda K}E_{R_i}^{(k)}g_i)^{\frac1{\Lambda K}}\|_{L^p(w_{B_{N^k}})}\lesssim_{\epsilon,K,p}N^{2(\frac12-\frac1p)+\epsilon}( \prod_{i=1}^{\Lambda K} (\sum_{l(\Delta)=N^{-1}}\|E_{\Delta}^{(k)}g_i\|_{L^p(w_{B_{N^k}})}^p)^{1/p})^{\frac1{\Lambda K}},$$
for each ball $B_{N^k}$ with radius $N^k$. This can be read as $D_{k,K}(M,p)\lesssim_{\epsilon,K,p}M^{\frac2{k}(\frac12-\frac1p)+\epsilon}$ for $M\ge K$. The result now follows from Theorem \ref{ft2}.
\end{proof}

\bigskip

\section{The proof of Theorem \ref{tfek6}}
\label{last}
In this section we finish the proof of Theorem \ref{tfek6}. In fact we will prove the following more general result.
\begin{theorem}
\label{yjoui0tyiu90ty8y0-r5iy5690it}
If Conjecture \ref{ckconjh} holds true for some $k\ge 2$ then
\begin{equation}
\label{ji nvhfy vguitgurtgupyto}
D_{k}(N,p)\lesssim_{\epsilon}N^{\frac2k(\frac12-\frac1{p})+\epsilon}
\end{equation}
for each $2\le p\le k(k+3)-2$.
\end{theorem}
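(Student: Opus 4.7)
The plan is to bootstrap from Corollary \ref{tiughibmuh y6m894564y078507}, which (under Conjecture \ref{ckconjh}) already yields $D_k(N,p) \lesssim_\epsilon N^{\frac{2}{k}(\frac{1}{2}-\frac{1}{p})+\epsilon}$ for $2 \le p \le n$, and to extend this range up to $p \le k(k+3)-2 = 2n-2$. The argument follows the same template as the one used in \cite{BD4} for one-dimensional curves, with Proposition \ref{iovjurgyptn8vbgu89357893v7589ty7056893} (or its Corollary \ref{gjityiophjytophpotigirti0-we=fdcwee=w=}) playing the central role. Fix $p$ with $n < p \le 2n-2$, set $\kappa_p = \frac{p-n}{p-2}$ (so that $\kappa_p \in (0, \frac{1}{2}]$ exactly on this range, with $\kappa_p = \frac{1}{2}$ at the endpoint $p = 2n-2$), and let $\eta_p = \inf\{\eta : D_k(N,p) \lesssim_\epsilon N^{\eta+\epsilon}\}$. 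The trivial bound \eqref{fe3008} provides the starting input $\eta_p \le \frac{2}{k}(1-\frac{1}{p})$, and the goal is to improve this to $\eta_p \le \frac{2}{k}(\frac{1}{2}-\frac{1}{p})$.

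The core iteration is built from Corollary \ref{gjityiophjytophpotigirti0-we=fdcwee=w=} applied with its internal parameter equal to $M^{2/k}$ and ambient ball $B_{M^2}$. The LHS then involves the intermediate scale $M^{-1/k}$, the middle RHS factor is at the fine scale $M^{-2/k}$ (the natural decoupling scale inside $B_{M^2}$), and the last RHS factor collects individual $L^p$ norms at scale $M^{-1/k}$. To extract a bound on $D_{k,K}(M^2,p)$ from this, one uses the randomization trick from the proof of Corollary \ref{fc1} to pass between the $\ell^2$-square functions that appear in the Corollary and the $\ell^p$-sums defining $D_{k,K}$. Each individual $\|E_\tau^{(k)} g_i\|_{L^p(w_{B_{M^2}})}$ in the last factor is decoupled from scale $M^{-1/k}$ down to $M^{-2/k}$ by the rescaling Proposition \ref{rvty090-0w=qw} (with $\delta = M^{-1/k}$, $\rho = \frac{1}{2k}$, $N = M^2$), contributing a factor of $D_k(M,p)^{\kappa_p}$ together with a combinatorial factor $M^{\frac{2\kappa_p}{k}(\frac{1}{2}-\frac{1}{p})}$ coming from the $\ell^2$-versus-$\ell^p$ concavity inequality in the reduction. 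The resulting estimate has the schematic form
\begin{equation*}
D_{k,K}(M^2, p) \lesssim_{K,\epsilon} M^{\epsilon + \frac{2\kappa_p}{k}(\frac{1}{2}-\frac{1}{p})} \cdot D_k(M^2, p)^{1-\kappa_p} \cdot D_k(M,p)^{\kappa_p}.
\end{equation*}
Combining this with Theorem \ref{ft2} (which comparable $D_k(\cdot,p)$ to $D_{k,K}(\cdot,p)$ up to losses that vanish as $K\to\infty$) and the inductive hypothesis $D_k(\cdot,p) \lesssim_\epsilon (\cdot)^{\eta_p + \epsilon}$ yields the self-improving inequality $\eta_p \kappa_p \le \frac{2\kappa_p}{k}(\frac{1}{2}-\frac{1}{p}) + O(\epsilon)$; dividing by $\kappa_p > 0$ and letting $\epsilon \to 0$ produces the required bound $\eta_p \le \frac{2}{k}(\frac{1}{2}-\frac{1}{p})$.

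The principal obstacle is the careful bookkeeping required to convert between the $\ell^2$-square-function formulation inherent to Corollary \ref{gjityiophjytophpotigirti0-we=fdcwee=w=} (which comes from the Brascamp--Lieb-type multilinear restriction Theorem \ref{tfek4}) and the $\ell^p$-sum formulation defining $D_k$ and $D_{k,K}$; this passage is delicate because the naive pointwise Cauchy--Schwarz bound $|E_{R_i}^{(k)}g_i| \le (\#\tau)^{1/2}(\sum_\tau |E_\tau^{(k)} g_i|^2)^{1/2}$ loses a factor of order $M^{1/k}/K$, which must be avoided by exploiting $L^2$-almost orthogonality via the randomization argument from Corollary \ref{fc1}. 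A secondary but essential subtlety is the restriction $p \le 2n - 2$: as observed in the remark following Proposition \ref{iovjurgyptn8vbgu89357893v7589ty7056893}, $\kappa_p$ is the sharp lower threshold for the interpolation, and the regime $\kappa_p > \frac{1}{2}$ (i.e., $p > 2n - 2$) would force the self-referential factor $D_k(M^2,p)^{1-\kappa_p}$ on the right-hand side to carry an exponent too large for the recursion to contract down to the conjectured critical value.
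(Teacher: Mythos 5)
Your overall architecture (bound $D_{k,K}$ by $D_k$ at smaller scales via Corollary \ref{gjityiophjytophpotigirti0-we=fdcwee=w=} and the rescaling Proposition \ref{rvty090-0w=qw}, then feed the result into Theorem \ref{ft2} and run a critical-exponent bootstrap) is the right template, but your core schematic estimate has a genuine gap at its very first step. To apply Corollary \ref{gjityiophjytophpotigirti0-we=fdcwee=w=} with parameter $M^{2/k}$ on $B_{M^2}$ you must first dominate $\prod_i|E^{(k)}_{R_i}g_i|^{1/\Lambda K}$ inside $L^p(w_{B_{M^2}})$ by the pointwise square function over squares of side $M^{-1/k}$. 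The only generally valid passage is Cauchy--Schwarz, which costs about $M^{1/k}/K$ (the square root of the number of such squares in $R_i$), a power loss that ruins the claimed inequality; and the randomization argument of Corollary \ref{fc1} cannot remove it. That argument randomizes signs over caps and uses Khintchine together with the sign-invariance of an $\ell^2$ sum of $L^2$ norms on the right-hand side of the multilinear restriction estimate; it places a square function on the \emph{left} of an inequality whose right side is in $L^2$, and it does not provide a lossless domination of $|E^{(k)}_{R_i}g_i|$ by an intermediate-scale square function in $L^p$ for $p>2$ --- that would be a reverse square-function/decoupling statement, essentially of the strength being proved. The paper's proof is organized precisely to avoid this: Lemma \ref{wlem0081} pays Cauchy--Schwarz only at the extremely coarse scale $N^{-2^{-s}}$, where the loss $N^{2^{-s}}$ is negligible for large $s$, and then \eqref{fe21} is iterated $s-m+1$ times down the scales $N^{-2^{-l}}$, invoking \eqref{fe22} at every step so that each $\ell^2$-of-$L^p$ factor is converted into $D_k(N^{1-k2^{-l}},p)$ times the target $\ell^p$ sum at scale $N^{-1/k}$. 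A single application of the Corollary with the pair of scales $(M^{-1/k},M^{-2/k})$ cannot substitute for this multi-scale walk; also, your middle factor $D_k(M^2,p)^{1-\kappa_p}$ is not obtainable as stated, since the decoupling constant relates $E^{(k)}_{[0,1]^2}g$ to an $\ell^p$ sum, not a fine-scale square function to an $\ell^p$ sum (the honest bound there is the trivial Minkowski--H\"older loss).

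Relatedly, your explanation of the range restriction $p\le k(k+3)-2$ is not the actual mechanism and is in fact backwards: for $p>2n-2$ one has $\kappa_p>\frac12$, so the exponent $1-\kappa_p$ on the self-referential factor becomes \emph{smaller}, which would only help a one-step contraction; indeed, if your lossless first step were granted, your recursion would formally close for every $\kappa_p\in(0,1)$, which is a warning sign that the scheme does not see the true obstruction. In the paper the constraint arises inside the multi-scale iteration: each step squares the relevant scale, so the self-referential contributions accumulate like a geometric series in $2(1-\kappa)$ (see the factor $\frac{1-[2(1-\kappa)]^{s-m+1}}{2\kappa-1}$ in \eqref{nmcvuyfgurivgnyy35t789t89y890}), and one needs $\kappa>\frac12$ for boundedness as $s\to\infty$ while Corollary \ref{gjityiophjytophpotigirti0-we=fdcwee=w=} forces $\kappa\ge\kappa_p$; these are compatible exactly when $\kappa_p\le\frac12$, i.e. $p\le k(k+3)-2$. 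Finally, the endgame is slightly more delicate than ``divide by $\kappa_p$'': because of the $\beta(K,p)$ loss and the maximum over $M\le N$ in \eqref{fe31}, the paper runs a two-stage contradiction with the critical exponent $\gamma_p$, first showing $\gamma_{p,\delta,s,\epsilon,\kappa}\le\frac2k(\frac12-\frac1p)$ and only then reinserting this into \eqref{fek16}; your sketch of this last stage is in the right spirit but would need that extra care.
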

Recall  that we have verified Conjecture \ref{ckconjh} for $k=2,3$ in Section \ref{Trans}. In particular, Theorem \ref{tfek6} will follow.

\bigskip

 For the rest of this section fix $k\ge 2$. Let  $R_1,\ldots,R_{\Lambda K}$ be arbitrary distinct squares in $\Col_K$. Here and in the following,
$$\kappa_p=\frac{p-n}{p-2},\;\;n=\frac{k(k+3)}{2}.$$

Corollary \ref{gjityiophjytophpotigirti0-we=fdcwee=w=} and the H\"older inequality imply that  for each $g_i:R_i\to\C$,  each $l\ge 2$ and each $\kappa_p\le \kappa\le 1$ we have
$$\|(\prod_{i=1}^{\Lambda K}\sum_{\atop{l(\tau)=N^{-2^{-l}}}}|E_{\tau}^{(k)}g_i|^2)^{\frac1{2\Lambda K}}\|_{L^{p}(w_{B_N})} \le C_{p, K,\epsilon}N^{\frac{\kappa}{2^{l-1}}(\frac12-\frac1p)+\epsilon}\times $$
\begin{equation}
\label{fe21}
 \|(\prod_{i=1}^{\Lambda K}\sum_{\atop{l(\Delta)=N^{-2^{-l+1}}}}
|E_{\Delta}^{(k)}g_i|^2)^{\frac1{2\Lambda K}}\|_{L^{p}(w_{B_N})}^{1-\kappa}(\prod_{i=1}^{\Lambda K}\sum_{\atop{l(\tau)=N^{-2^{-l}}}}
\|E_{\tau}^{(k)}g_i\|_{L^{p}(w_{B_N})}^p)^{\frac{\kappa}{\Lambda Kp}}.
\end{equation}
The value $\kappa=\kappa_p$ suffices for proving \eqref{ji nvhfy vguitgurtgupyto} at the endpoint $p=k(k+3)-2$, while use of $\kappa>\kappa_p$ will be made in order to prove \eqref{ji nvhfy vguitgurtgupyto} for $2\le p<k(k+3)-2$. While the conjectured values of $D_k(N,p)$ exhibit an affine dependence  on $\frac1p$,  we are not aware of any interpolation argument when $k\ge 3$, that recovers \eqref{ji nvhfy vguitgurtgupyto} for $2< p<k(k+3)-2$, from the correct estimates for $D_k(N,2)$ and $D_k(N,k(k+3)-2)$. This is because we decompose into curved regions that, when $k\ge 3$, are no longer straight tubes. There is however an interpolation available for $k=2$, see for example \cite{BD3}.

\bigskip

We will find useful the following immediate consequence of the Cauchy--Schwartz inequality. While the exponent $2^{-s}$ in $N^{2^{-s}}$ can be improved by making use of   transversality, the following trivial estimate will suffice for our purposes.
\begin{lemma}
\label{wlem0081}Consider $\Lambda K$  squares $R_1,\ldots,R_{\Lambda K}\in \Col_K$. Assume $g_i$ is supported on $R_i$.
Then for $1\le p\le\infty$ and $s\ge 2$
$$\|(\prod_{i=1}^{\Lambda K}|E_{R_i}^{(k)}g_i|)^{\frac1{\Lambda K}}\|_{L^{p}({w_{B_N}})}\le N^{2^{-s}}\|(\prod_{i=1}^{\Lambda K}\sum_{\atop{l(\tau_s)=N^{-2^{-s}}}}|E_{\tau_s}^{(k)}g_i|^2)^{\frac1{2\Lambda K}}\|_{L^{p}(w_{B_N})}.$$
\end{lemma}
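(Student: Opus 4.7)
The plan is to obtain the inequality directly from a pointwise Cauchy--Schwarz bound applied inside each factor $|E_{R_i}^{(k)}g_i|$. Fix $x\in\R^n$. Since $g_i$ is supported on $R_i$, linearity of the extension operator (split the defining integral over a partition, or a finitely overlapping cover, of $R_i$ by squares $\tau_s$ with $l(\tau_s)=N^{-2^{-s}}$) gives
$$E_{R_i}^{(k)}g_i(x)=\sum_{\tau_s\subset R_i}E_{\tau_s}^{(k)}g_i(x).$$
The number of terms is at most the total count of squares of side $N^{-2^{-s}}$ partitioning $[0,1]^2$, namely $N^{2\cdot 2^{-s}}$.

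Next I would apply the Cauchy--Schwarz inequality to this sum at each point $x$, which yields
$$|E_{R_i}^{(k)}g_i(x)|\le N^{2^{-s}}\Big(\sum_{l(\tau_s)=N^{-2^{-s}}}|E_{\tau_s}^{(k)}g_i(x)|^2\Big)^{1/2}.$$
Multiplying this pointwise estimate over $i=1,\ldots,\Lambda K$ and extracting the $\frac{1}{\Lambda K}$--th power gives
$$\Big(\prod_{i=1}^{\Lambda K}|E_{R_i}^{(k)}g_i(x)|\Big)^{\frac{1}{\Lambda K}}\le N^{2^{-s}}\Big(\prod_{i=1}^{\Lambda K}\sum_{l(\tau_s)=N^{-2^{-s}}}|E_{\tau_s}^{(k)}g_i(x)|^2\Big)^{\frac{1}{2\Lambda K}}.$$

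To finish, I would take the $L^p(w_{B_N})$ norm of both sides for any $1\le p\le\infty$; the constant $N^{2^{-s}}$ is unaffected by the norm, and the result is precisely the stated inequality. There is no genuine obstacle here: the argument is pure Cauchy--Schwarz plus a trivial count of smaller squares, and the hypothesis $s\ge 2$ merely ensures that $N^{-2^{-s}}$ is at most the side length $K^{-1}$ of $R_i$ in the large-$N$ regime where the lemma is invoked. The estimate is deliberately crude, ignoring transversality entirely; as the authors remark immediately before stating the lemma, the power $N^{2^{-s}}$ is improvable by exploiting transversality, but this trivial form is sufficient for the application in the last section.
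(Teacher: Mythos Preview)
Your argument is correct and is precisely the ``immediate consequence of the Cauchy--Schwartz inequality'' that the paper invokes without further detail: decompose $E_{R_i}^{(k)}g_i$ as a sum over the $N^{2\cdot 2^{-s}}$ smaller squares, apply Cauchy--Schwarz pointwise, multiply over $i$, and take the $L^p(w_{B_N})$ norm. There is nothing to add; the paper gives no separate proof beyond naming Cauchy--Schwarz.
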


\bigskip

\begin{proof}[of Theorem \ref{yjoui0tyiu90ty8y0-r5iy5690it}]

Fix $k\ge 2$ so that  Conjecture \ref{ckconjh} holds true. Because of \eqref{ji nvhfy vguitgurtgupytooihnopyin9pnyiup97iui9076}, we can restrict attention to $n<p<k(k+3)-2$.

Fix $\epsilon>0$, $K\ge 2$, to be chosen later.
\bigskip

Let $R_1,\ldots, R_{\Lambda K}\in \Col_K$ be arbitrary squares and assume $g_i$ is supported on $R_i$. Define $m$ to be the smallest integer so that $2^{-m}\le \frac1k$.

Start with Lemma \ref{wlem0081}, continue with iterating \eqref{fe21} $s-m+1$ times, and invoke \eqref{fe22} at each step to write for each $p>n$ and each $\kappa_p\le \kappa\le 1$
$$\|(\prod_{i=1}^{\Lambda K}|E_{R_i}^{(k)}g_i|)^{\frac1{\Lambda K}}\|_{L^{p}({B_N})}\le  N^{2^{-s}}\|(\prod_{i=1}^{\Lambda K}\sum_{\atop{l(\tau_s)=N^{-2^{-s}}}}|E_{\tau_s}^{(k)}g_i|^2)^{\frac1{2\Lambda K}}\|_{L^{p}(w_{B_N})}\le $$
$$ N^{2^{-s}}(C_{p,K,\epsilon}N^\epsilon)^{s-m+1}\prod_{l=m}^s
N^{\frac{\kappa}{2^{l-1}}(1-\kappa)^{s-l}(\frac12-\frac1p)}\times$$$$\|(\prod_{i=1}^{\Lambda K}\sum_{\atop{l(\tau)=N^{-2^{1-m}}}}|E_{\tau}^{(k)}g_i|^2)^{\frac1{2\Lambda K}}\|_{L^{p}(w_{B_N})}^{(1-\kappa)^{s-m+1}}\times $$
$$\prod_{i=1}^{\Lambda K}\left[\prod_{l=m}^s(\sum_{\atop{l(\tau)=N^{-2^{-l}}}}\|E_{\tau}^{(k)}g_i\|_{L^{p}(w_{B_N})}^p)^{\frac{\kappa}{p}(1-\kappa)^{s-l}}\right]^{\frac1{\Lambda K}}$$

$$ \le N^{2^{-s}}(C_{p,K,\epsilon}N^\epsilon)^{s-m+1}(\prod_{i=1}^{\Lambda K}\sum_{\atop{l(\Delta)=N^{-1/k}}}
\|E_{\Delta}^{(k)}g_i\|_{L^{p}(w_{B_N})}^p)^{\frac{1-{(1-\kappa)^{s-m+1}}}{\Lambda Kp}}\times$$
$$N^{2^{1-s}\kappa(\frac12-\frac1p)\frac{1-[2(1-\kappa)]^{s-m+1}}{1-2(1-\kappa)}}\times\|(\prod_{i=1}^{\Lambda K}\sum_{\atop{l(\tau)=N^{-2^{1-m}}}}|E_{\tau}^{(k)}g_i|^2)^{\frac1{2\Lambda K}}\|_{L^{p}(w_{B_N})}^{(1-\kappa)^{s-m+1}}\times$$
\begin{equation}
\label{fe34}
 D_k(N^{1-k2^{-s}},p)^{\kappa}D_k(N^{1-k2^{-s+1}},p)^{\kappa(1-\kappa)}\cdot\ldots\cdot D_k(N^{1-k2^{-m}},p)^{\kappa(1-\kappa)^{s-m}}.
\end{equation}

Note that the inequality
$$\|(\sum_{\atop{l(\Delta)=N^{-2^{1-m}}}}|E_{\Delta}^{(k)}g_i|^2)^{\frac1{2}}\|_{L^{p}(w_{B_N})}\le N^{O_p(1)}(\sum_{\atop{l(\Delta)=N^{-1/k}}}\|E_{\Delta}^{(k)}g_i\|_{L^{p}(w_{B_N})}^p)^{1/p}$$
is a consequence of Minkowski's inequality and standard truncation arguments. The precise value of the exponent is not relevant, all that matters is that it is $O_p(1)$. Applying H\"older's inequality leads to

$$\|\prod_{i=1}^{\Lambda K}(\sum_{\atop{l(\Delta)=N^{-2^{1-m}}}}|E_{\Delta}^{(k)}g_i|^2)^{\frac1{2\Lambda K}}\|_{L^{p}(w_{B_N})}\le N^{O_p(1)}(\prod_{i=1}^{\Lambda K}\sum_{\atop{l(\Delta)=N^{-1/k}}}\|E_{\Delta}^{(k)}g_i\|_{L^{p}(w_{B_N})}^p)^{\frac{1}{\Lambda Kp}}.$$

Using this and maximizing over all choices of $R_i$, \eqref{fe34} has the following consequence, for all $N\ge K$
$$D_{k,K}(N,p)\le (C_{p,K,\epsilon}N^\epsilon)^{s-1} N^{2^{-s}}N^{2^{1-s}\kappa(\frac12-\frac1p)\frac{1-[2(1-\kappa)]^{s-m+1}}{1-2(1-\kappa)}}\times$$
\begin{equation}
\label{fe23}
  D_k(N^{1-k2^{-s}},p)^{\kappa}D_k(N^{1-k2^{-s+1}},p)^{\kappa(1-\kappa)}\cdot\ldots\cdot D_k(N^{1-k2^{-m}},p)^{\kappa(1-\kappa)^{s-m}}  N^{O_p((1-\kappa)^s)}.
\end{equation}
\bigskip

Let $\gamma_p$ be the unique positive number such that
$$\lim_{N\to\infty}\frac{D_k(N,p)}{N^{\gamma_p+\delta}}=0,\;\text{for each }\delta>0$$
and
\begin{equation}
\label{fe27}
\limsup_{N\to\infty}\frac{D_k(N,p)}{N^{\gamma_p-\delta}}=\infty,\;\text{for each }\delta>0.
\end{equation}
The existence of such $\gamma_p$ is guaranteed by \eqref{fe3008}.
By using the fact that $D_k(N,p)\lesssim_\delta N^{\gamma_p+\delta}$ in \eqref{fe23}, it follows that for each $\delta,\epsilon>0$ and $K, s\ge 2$
\begin{equation}
\label{fe32}
\limsup_{N\to\infty}\frac{D_{k,K}(N,p)}{N^{\gamma_{p,\delta,s,\epsilon,\kappa}}}<\infty
\end{equation}
 where
$$\gamma_{p,\delta,s,\epsilon,\kappa}=\epsilon(s-1)+2^{-s}+\kappa(\gamma_p+\delta)(\frac{1-(1-\kappa)^{s-m+1}}{\kappa}-k2^{-s}
\frac{1-[2(1-\kappa)]^{s-m+1}}{2\kappa-1})+$$
\begin{equation}
\label{nmcvuyfgurivgnyy35t789t89y890}
2^{1-s}\kappa(\frac12-\frac1p)\frac{1-[2(1-\kappa)]^{s-m+1}}{2\kappa-1} +O_p((1-\kappa)^s).
\end{equation}
Recall that our goal is to prove that for each $n< p\le k(k+3)-2$
\begin{equation}
\label{djcfhughyvg7trgyn7otgy7nn845nyon}
\gamma_{p}\le \frac2k(\frac12-\frac1{p}).
\end{equation}
\bigskip

Assume for contradiction that this is not true, for some $p$. Then, for $\kappa$ larger than but close enough to $\frac12$  we have
\begin{equation}
\label{fe26}
\gamma_p>\frac2{k}(\frac{2\kappa-1}{2\kappa}+\frac12-\frac1p).
\end{equation}
Note that \eqref{fe32} holds for this $\kappa$, as $\kappa_p\le \frac12<\kappa$. A simple computation using that $2(1-\kappa)<1$ and \eqref{fe26} shows that for $s$ large enough and for $\epsilon,\delta$ small enough we have
\begin{equation}
\label{fe33}
\gamma_{p,\delta,s,\epsilon,\kappa}<\gamma_p.
\end{equation}
This follows by noticing that \eqref{nmcvuyfgurivgnyy35t789t89y890} implies
$$2^s(\gamma_{p,\delta,s,\epsilon,\kappa}-\gamma_p)=o_{\epsilon,\delta,s}(1)+1+\frac{2\kappa}{2\kappa-1}(\frac12-\frac1p-\gamma_p\frac{k}{2}).$$
Fix such  $\epsilon,\delta, s,\kappa$.

Now, \eqref{fe31}  shows that for $N\ge K$
\begin{equation}
\label{fek16}
D_k(N,p)\le N^{\beta(K,p)+\frac2k(\frac12-\frac1p)}+\Omega_{K,p}\log_K N\max_{1\le M\le N}\left[(\frac{M}{N})^{\frac2k(\frac1p-\frac12)}D_{k,K}(M,p)\right].
\end{equation}
We  argue that $\gamma_{p,\delta,s,\epsilon,\kappa}\le \frac{2}{k}(\frac12-\frac1p)$. If this were not true, we could choose $K$ large enough so that $$\beta(K,p)+\frac2k(\frac12-\frac1p)\le\gamma_{p,\delta,s,\epsilon,\kappa}.$$
Combining this with \eqref{fe32} and \eqref{fek16} leads to
$$D_k(N,p)\le(\Omega_{K,p}\log_K N+1)N^{\gamma_{p,\delta,s,\epsilon,\kappa}}.$$
This of course contradicts \eqref{fe33} and \eqref{fe27}.

Using now that $\gamma_{p,\delta,s,\epsilon,\kappa}\le \frac{2}{k}(\frac12-\frac1p)$ together with \eqref{fe32}, we can rewrite  \eqref{fek16} as follows
$$D_k(N,p)\le N^{\beta(K,p)+\frac2k(\frac12-\frac1p)}+\Omega_{K,p}\log_K NN^{\frac2k(\frac12-\frac1p)}.$$
By choosing $K$ as large as needed and using the definition of $\gamma_p$, this forces $\gamma_p\le \frac2k(\frac12-\frac1p)$, contradicting our original assumption that \eqref{djcfhughyvg7trgyn7otgy7nn845nyon} is false.
\end{proof}
\bigskip

\end{document}